%%%%%%%%%%%%%%%%%%%%%%%%%%%%%%%%%%%%%%%%
%															%
% 	ANALYTICAL AND NUMERICAL INVESTIGATION OF TRAVELING 		%
%	WAVES FOR THE ALLEN-CAHN MODEL WITH RELAXATION			%
%	C.Lattanzio, C.Mascia, R.Plaza, C.Simeoni						%
%															%
%%%%%%%%%%%%%%%%%%%%%%%%%%%%%%%%%%%%%%%%

\documentclass[10pt,reqno]{amsart}
\usepackage{ifthen, amsmath,amssymb, graphicx, mathrsfs, amsfonts}
\usepackage{esint}

\newcommand{\R}{\mathbb{R}}
\newcommand{\C}{\mathbb{C}}
\newcommand{\Q}{\mathbf{Q}}

\newcommand{\Real}{\textrm{\rm Re}\,}
\newcommand{\Imag}{\textrm{\rm Im}\,}
\newcommand{\diag}{\textrm{\rm diag}\,}
\newcommand{\dx}{\textrm{\rm dx}}
\newcommand{\dt}{\textrm{\rm dt}}
\newcommand{\A}{\mathbf{A}}
\newcommand{\At}{\mathbf{A}^\tau}
\newcommand{\ess}{\sigma_{\textrm{ess}}}
\newcommand{\ptsp}{\sigma_{\textrm{pt}}}
\newcommand{\cT}{\mathcal{T}}
\newcommand{\cL}{\mathcal{L}}

\newcommand{\sgn}{\mathrm{sgn}\,}
\newcommand{\ind}{\text{ind}\,}

%%%%%%% Per vedere o no le note a margine %%%%%%%%%%
\provideboolean{shownotes}
\setboolean{shownotes}{true} 
\newcommand{\margnote}[1]{
	\ifthenelse{\boolean{shownotes}}
	{\marginpar{\raggedright\tiny\texttt{#1}}}
	{}
	}

%%%%%%% THEOREMS %%%%%%%%%%%%%%%%%%%%
\newtheorem{theorem}{Theorem}[section]
\newtheorem{lemma}[theorem]{Lemma}
\newtheorem{corollary}[theorem]{Corollary}
\newtheorem{proposition}[theorem]{Proposition}
\newtheorem{definition}[theorem]{Definition}
\theoremstyle{remark}
\newtheorem{remark}[theorem]{Remark}

%%%%%%%%%%%%%%%%%%%%%%%%%%%%%%%%%%%%%%%%

\numberwithin{equation}{section}

\begin{document}

\title[Traveling waves for the Allen-Cahn relaxation model]{Analytical and numerical investigation of traveling waves for the Allen--Cahn model with relaxation}

%\titlerunning{Traveling waves for the Allen-Cahn relaxation model}
%\authorrunning{Corrado Lattanzio et al.}

%\markleft{CORRADO LATTANZIO ET AL.}

\author[C. Lattanzio]{Corrado Lattanzio}
%%Corrado Lattanzio \and Corrado Mascia \and Ram\'on G. Plaza \and Chiara Simeoni
\address[Corrado Lattanzio]{Dipartimento di Ingegneria e Scienze dell'Informazione e Matematica, Universit\`a degli Studi dell'Aquila (Italy)}
	\email{corrado@univaq.it} 
 
\author[C. Mascia]{Corrado Mascia}
\address[Corrado Mascia]{Dipartimento di Matematica,
	Sapienza Universit\`a di Roma (Italy)}
	\email{mascia@mat.uniroma1.it}
 
\author[R.G. Plaza]{Ram\'on G. Plaza}
\address[Ram\'on G. Plaza]{Departamento de Matem\'aticas y Mec\'anica,
	IIMAS, Universidad Nacional Aut\'onoma de M\'exico (Mexico)}
	\email{plaza@mym.iimas.unam.mx}
\thanks{RGP is grateful to DISIM, University of L'Aquila, and to the MathMods Program (Erasmus Mundus)
for their hospitality and financial support in academic visits during the Falls of 2012 and 2013,
when this research was carried out.}

\author[C. Simeoni]{Chiara Simeoni}
\address[Chiara Simeoni]{Laboratoire de Math\'ematiques J.A. Dieudonn\'e, 
	Universit\'e Nice Sophia-Antipolis (France)}
	\email{simeoni@unice.fr}

\thanks{This work was partially supported by CONACyT (Mexico) and MIUR (Italy),
through the MAE Program for Bilateral Research, grant no.\ 146529
and by the Italian Project FIRB 2012 ``Dispersive dynamics: Fourier Analysis and Variational Methods''.}

\keywords{Allen--Cahn equation; Traveling waves; Nonlinear stability}

\maketitle

\begin{abstract} 
A modification of the parabolic Allen--Cahn equation, determined by the substitution
of Fick's diffusion law with a relaxation relation of Ca\-tta\-neo-Maxwell type, is considered.
The analysis concentrates on traveling fronts connecting the two stable states of the model,
investigating both the aspects of existence and stability.
The main contribution is the proof of the nonlinear stability of the wave, as a consequence
of detailed spectral and linearized analyses.
In addition, numerical studies are performed in order to determine the propagation
speed, to compare it to the speed for the parabolic case, and to explore
the dynamics of large perturbations of the front.
\end{abstract}

\tableofcontents

\section{Introduction}\label{sect:introduction}

The main topic of this paper is to investigate the dynamical behavior of a scalar variable $u$ subject
to a transport mechanism of hyperbolic type coupled with a reaction process, driving the unknown
$u$ toward one among two different competing stable states. Restricting the attention to a one-dimensional environment and denoting by $v$ the flux of $u$,
standard balance of mass provides the relation
\begin{equation}\label{balance}
	 u_t- v_x = f(u),
\end{equation}
dictating that the quantity $u$ diffuses with flux $v$ and grows/decays according to the
choice of the function $f$.
The simplest structure for the reaction term $f$ giving raise to two competing stable states is
referred to as a {\it bistable form}, meaning that $f$ is smooth and such that for some $\alpha\in(0,1)$,
\begin{equation}\label{bistablef}
	\begin{aligned}
	&f(0)=f(\alpha)=f(1)=0,
		&\qquad &f'(0), f'(1)<0,\quad f'(\alpha)>0,\\
	&f(u)>0\textrm{ in }(-\infty,0)\cup(\alpha,1),
		&\qquad &f(u)<0\textrm{ in }(0,\alpha)\cup(1,+\infty).
	\end{aligned}
\end{equation}
Equivalently, the function $f$ can be considered as the derivative of a double-well
potential with wells centered at $u=0$ and $u=1$. 
A typical reaction function satisfying \eqref{bistablef} which is often found in
the literature is the cubic polynomial
\begin{equation}\label{cubicf}
	f(u)=\kappa\,u(1-u)(u-\alpha),\qquad\kappa>0,\quad \alpha\in(0,1).
\end{equation}
Reaction functions of bistable type arise in many models of natural phenomena,
such as kinetics of biomolecular reactions \cite{Murr02,Mikh94},
nerve conduction \cite{Lieb67,McKe70}, and electrothermal instability \cite{IDRWZB95},
among others.

To complete the model, an additional relation has to be coupled with \eqref{balance}.
The standard approach is based on the use of Fick's diffusion law, which consists in the equality
$v=u_x$, so that one ends up with the semilinear parabolic equation
\begin{equation}\label{parAC}
	u_t = u_{xx} + f(u).
\end{equation}
Equation \eqref{parAC} has appeared in many different contexts and the nomenclature is not uniform.
It is known as the bistable reaction-diffusion equation \cite{FifeMcLe77}, the Nagumo equation in
neurophysiological modeling \cite{McKe70,NAY62}, the real Ginzburg--Landau for the variational
description of phase transitions \cite{MelSch04}, and the Chafee-Infante equation \cite{ChIn74},
among others.
In tribute to S. M. Allen and J. W. Cahn, who proposed it in connection with the motion of boundaries
between phases in alloys \cite{AlleCahn79}, we call it the {\it (parabolic) Allen--Cahn equation}.

Equation \eqref{parAC} undergoes the same criticisms received by the standard linear
diffusion equation, mainly concerning the unphysical infinite speed of propagation
of disturbances (see discussion in \cite{Holm93}).
Thus, following the modification proposed by Cattaneo \cite{Catt49} (see also
Maxwell \cite{Maxw1867}) for the heat equation, it is meaningful to couple \eqref{balance}
with an equation stating that the flux $v$ relaxes toward $u_x$ in
a time-scale $\tau>0$, namely
\begin{equation*}
	\tau v_t + v = u_x,
\end{equation*}
usually named {\it Maxwell--Cattaneo transfer law} (for a complete discussion on its role
and significance in the modeling of heat conduction, see \cite{JosePrez89,JosePrez90}).
With this choice, the couple density/flux $(u,v)$ solves the hyperbolic system 
\begin{equation}\label{relAC}
	u_t = v_x + f(u),\qquad
	\tau v_t = u_x - v.
\end{equation}
We are interested in studying the dynamics of solutions to system
\eqref{relAC}, which we refer to as the {\it Allen--Cahn model with relaxation}.
The corresponding Cauchy problem is determined by initial conditions
\begin{equation}\label{relACinitial}
	u(x,0)=u_0(x),\qquad v(x,0)=v_0(x), \qquad \, x\in\R.
\end{equation}

It is to be observed that we can eliminate the variable $v$ by a procedure known in some references
as \textit{Kac's trick} (cf. \cite{Hill97,HaMu01,Kac74}):
differentiate the first equation in \eqref{relAC} with respect to $t$, and the second with
respect to $x$, to obtain the following scalar second-order equation for the variable $u$,
\begin{equation}\label{relAConef}
	\tau u_{tt} + (1-\tau f'(u)) u_t = u_{xx} + f(u),
\end{equation}
which we call the {\it one-field equation} determined by \eqref{relAC}.
The initial conditions corresponding to \eqref{relACinitial} read
\begin{equation*}
	u(x,0)=u_0(x),\qquad u_t(x,0)=f(u_0(x))+v_0'(x),\qquad  \, x\in\R.
\end{equation*}
Notice that equation \eqref{relAConef} formally reduces to \eqref{parAC} in the singular limit $\tau\to 0^+$.

We also observe that if we include a diffusion coefficient $\nu>0$,
\begin{equation*}
	u_t = v_x + f(u),\qquad
	\tau  v_t = \nu  u_x -v,
\end{equation*}
then last system can be reduced to the form of \eqref{relAC} under the rescaling $x\mapsto x/\sqrt{\nu}$ and $v\mapsto v/\sqrt{\nu}$. Thus, we can consider the case $\nu = 1$ without loss of generality.

The hyperbolic system \eqref{relAC} can be interpreted as a model for a 
reaction-diffusion process. 
An intriguing issue is to compare the properties of the usual parabolic reaction-diffusion
equation \eqref{parAC} with the ones of its hyperbolic counterpart \eqref{relAC}.
This paper pertains to one of the main hallmarks of the Allen--Cahn equation: the presence
of stable heterogeneous structures, describing the interaction between the two stable states.
Specifically, we examine traveling wave solutions to \eqref{relAC}, i.e. special solutions of the form
\begin{equation}\label{twave}
	(u,v)(x,t)=(U,V)(\xi),\qquad\qquad \xi=x-ct,\quad c\in\R,
\end{equation}
with asymptotic conditions
\begin{equation}\label{asymptoticstates}
	(U,V)(\pm\infty)=(U_\pm,0),\qquad
	\textrm{where}\quad U_-:=0,\quad U_+:=1,
\end{equation}
with the aim of investigating their existence and stability from both an analytical
and numerical point of view.

Existence of traveling waves for systems of the form \eqref{relAC} with monostable/logistic
reaction terms has been widely investigated \cite{Hade88,Hade94,GildKers13}.
The situation for the bistable case is less explored, even if it is more or less known that,
under reasonable assumptions, there exist traveling fronts with a uniquely determined propagation speed.
For the sake of completeness, in \S \ref{sect:existence} we give a self-contained
proof of the following existence result, which provides also some properties crucial to the stability analysis.

\begin{theorem}\label{thm:existence}
Let $f$ be such that \eqref{bistablef} holds and let $\tau$ satisfy
\begin{equation}\label{smalltau2}
	0 < \tau < \tau_m:=1/\sup\limits_{u\in [0,1]} |f'(u)|.
\end{equation}
Then, there exists a unique value $c_\ast\in \R$
for which system \eqref{relAC} possesses a traveling wave propagating with speed $c_\ast$ and connecting the 
state $(0,0)$ to $(1,0)$.
Moreover,\par
(a) the function $U$ is monotone increasing;\par
(b) both components $U$ and $V$ are positive and 
	converge to their asymptotic states exponentially fast; and,\par
(c) the speed $c_\ast$ belongs to the interval $(-1/\sqrt{\tau},1/\sqrt{\tau})$ and depends continuously with respect to $\tau\in(0,\tau_m)$. Moreover, it converges to the speed of the (parabolic) Allen--Cahn equation as $\tau\to 0^+$.
\end{theorem}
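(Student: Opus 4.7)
The plan is to reduce the first-order system \eqref{relAC} to a second-order scalar ODE for the profile $U$ and then to locate the admissible wavespeed by a phase-plane/shooting argument in the spirit of the classical bistable theory \cite{FifeMcLe77}. Inserting the ansatz \eqref{twave} into \eqref{relAC}, the second equation yields
\begin{equation*}
V=(1-c^2\tau)\,U'-c\tau\,f(U),
\end{equation*}
and substitution in the first one produces the profile equation
\begin{equation*}
(1-c^2\tau)\,U''+c\bigl(1-\tau f'(U)\bigr)U'+f(U)=0,\qquad U(\pm\infty)=U_\pm.
\end{equation*}
The restriction $|c|<1/\sqrt{\tau}$ keeps the leading coefficient positive, while \eqref{smalltau2} ensures $1-\tau f'(U)>0$ uniformly on $[0,1]$, so the equation has the structure of a damped oscillator with bistable potential $F(U):=\int_0^U f(s)\,ds$. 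Linearizing the associated planar system at $(0,0)$ and $(1,0)$ one finds, under \eqref{bistablef}, that both rest points are hyperbolic saddles whose one-dimensional stable/unstable manifolds depend smoothly on $(c,\tau)$.

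\textbf{Shooting and monotonicity.} For each admissible $c$, let $\Phi_c$ be the orbit leaving $(0,0)$ along the unstable direction pointing into $\{0<U<1,\,U'>0\}$. Along solutions the energy $E:=\tfrac12(1-c^2\tau)(U')^2+F(U)$ satisfies $E'=-c(1-\tau f'(U))(U')^2$, so its monotonicity is dictated by the sign of $c$. Using this dissipation identity I plan to establish the standard overshoot/undershoot dichotomy for $\Phi_c$: for $c$ sufficiently close to $-1/\sqrt{\tau}$ the orbit overshoots past $U=1$ with positive slope, while for $c$ sufficiently close to $+1/\sqrt{\tau}$ it falls back onto the $U$-axis before reaching $U=1$. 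Continuous dependence of $\Phi_c$ on $c$ and the intermediate value theorem then produce a speed $c_*\in(-1/\sqrt{\tau},1/\sqrt{\tau})$ for which $\Phi_{c_*}$ matches the stable manifold of $(1,0)$. The monotonicity claim (a) is immediate from the fact that the connecting orbit lives in $\{U'>0\}$, and uniqueness of $c_*$ will follow from a monotone dependence of $\Phi_c$ on $c$, obtained by differentiating the profile ODE with respect to $c$ and invoking a Sturm-type comparison on the resulting variational equation.

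\textbf{Decay, recovery of $V$, and the parabolic limit.} Item (b) is a direct consequence of the hyperbolicity of the saddles: the linearized eigenvalues at $U_\pm$ are real of opposite sign, which forces exponential convergence along the invariant manifolds, and the algebraic relation for $V$ above transports rate and sign information to the flux component; positivity of $V$ is verified separately on $[0,\alpha]$ and $[\alpha,1]$ by inspection, using $U'>0$, the sign of $f$, and the energy identity. For (c), the implicit function theorem applied to the shooting map at $c=c_*$, rendered nondegenerate by the monotonicity of Step~2, gives continuity of $c_*$ in $\tau\in(0,\tau_m)$, and the limit $\tau\to 0^+$ is regular because the profile equation collapses continuously to the classical bistable wave equation $U''+cU'+f(U)=0$ associated with \eqref{parAC}.

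\textbf{Main obstacle.} The delicate point will be the overshoot/undershoot analysis at the endpoints $c=\pm 1/\sqrt{\tau}$: there the coefficient $1-c^2\tau$ vanishes, the profile ODE degenerates from second to first order, and the invariant manifolds lose hyperbolic uniformity. Capturing the switch of $\Phi_c$ across the admissible interval will require a careful singular rescaling (or a comparison with the first-order reduction at the boundary) in order to guarantee that the shooting map really sweeps across the target; everything else is largely a continuous-dependence argument built on this core step.
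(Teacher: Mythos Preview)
Your outline is sound and would produce a correct proof, but it follows a genuinely different route from the paper. You reduce to the second-order scalar equation for $U$ and run a classical Fife--McLeod shooting in the $(U,U')$ plane, using the energy identity $E'=-c(1-\tau f'(U))(U')^2$ to control the overshoot/undershoot dichotomy and a Sturm-type monotonicity for uniqueness. The paper, by contrast, stays in the original $(U,V)$ phase plane: after the rescaling $\eta=(1-c^2\tau)^{-1}\xi$ the system becomes $U_\eta=c\tau f(U)+V$, $V_\eta=-f(U)-cV$, and the authors track the unstable manifold of $(0,0)$ and the stable manifold of $(1,0)$ up to the line $U=\alpha$, showing via a \emph{rotated vector field} argument that the intersection values $v_0(c),v_1(c)$ are strictly monotone in $c$ in opposite directions. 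Existence and uniqueness of $c_\ast$ then follow from a single intermediate-value step.

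The practical payoff of the paper's choice is exactly at the point you flag as the main obstacle. In the $(U,V)$ variables the rescaled system remains a perfectly regular planar ODE at $c=\pm 1/\sqrt{\tau}$; the degenerate limits are computed explicitly (trajectories are straight lines $v=u/\sqrt{\tau}+C$ and the relevant manifolds are the line $v=u/\sqrt{\tau}$ and the graph $v=\sqrt{\tau}f(u)$), giving closed-form values for $v_0,v_1$ at the endpoints. Your second-order equation, in contrast, genuinely loses order there, so you will need the singular rescaling you mention to recover the endpoint dichotomy. That is doable, but it is extra work that the $(U,V)$ formulation sidesteps. Conversely, your approach has the advantage of being the direct analogue of the parabolic argument, so the $\tau\to 0^+$ limit and the continuity in $\tau$ are slightly more transparent. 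Your claim on the positivity of $V$ via the formula $V=(1-c^2\tau)U'-c\tau f(U)$ is immediate on the interval where $c$ and $f(U)$ have opposite signs but needs an additional argument on the other half; the paper obtains $V>0$ for free from the barrier curves $v=\pm\sqrt{\tau}f(u)$ that confine the heteroclinic orbit.
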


The smallness assumption \eqref{smalltau2} on the relaxation parameter $\tau$ is not sharp
and arises as a consequence of the specific choices of the natural variables $u$ and $v$, which
generates some obstruction in the course of the proof. 
A different choice of unknowns could provide a more general result allowing a weaker requirement on $\tau$.
%and, specifically, $\tau<{1}/{\sup_{u} f'(u)}$. 

We observe, however, that condition \eqref{smalltau2} is tantamount to the positivity of the damping
coefficient in equation \eqref{relAConef}, a condition which is usually imposed in order to ensure that
the solution is positive
(hyperbolic equations may have negative solutions even with positive initial conditions; cf. \cite{Hade99}).
The latter is an important feature for a density solution, for example.
Furthermore, although condition \eqref{smalltau2} may seem a pure mathematical assumption,
it relates the relaxation time $\tau$ with the typical time scale
$\tau_{\mathrm{reac}} = \inf |u/f(u)| \sim 1/\sup |f'(u)|$ associated to the reaction. 

Passing to the stability issue, as for evolution problems on the whole real line defined by autonomous
partial differential equations, invariance with respect to translations determines that any traveling
wave belongs to a manifold of solutions of the same type with dimension at least equal to one.
Thus, small perturbations of a given front are not expected to decay to the front itself, but to the manifold
generated by the traveling wave and, at best, to a specific element of such set. 
Such property, called {\it orbital stability}, holds for the present case.
More precisely, we establish the following

\begin{theorem}\label{thm:nonlinstab}
Let $f\in C^3$ be such that \eqref{bistablef} holds and $\tau \in [0,\tau_m)$.
Let $(U,V)$ be a traveling wave of \eqref{relAC} satisfying \eqref{asymptoticstates}
with speed $c_\ast$.
Then, there exists $\varepsilon>0$ such that, for any initial data satisfying $(u_0,v_0) - (U,V) \in H^1(\R;\R^2)$ with
$|(u_0,v_0)-(U,V)|_{{}_{H^1}}<\varepsilon$, the solution $(u,v)$ to the Cauchy problem
\eqref{relAC}--\eqref{relACinitial} satisfies for any $t>0$
\begin{equation*}
	|(u,v)(\cdot,t)-(U,V)(\cdot-c_\ast t+\delta)|_{{}_{H^1}}
		\leq C|(u_0,v_0)-(U,V)|_{{}_{H^1}}\,e^{-\theta\,t}
\end{equation*}
for some shift $\delta\in\R$ and constants $C,\theta>0$.
\end{theorem}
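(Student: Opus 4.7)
The plan follows the classical modulational approach to orbital stability of traveling waves, adapted to the hyperbolic relaxation setting of \eqref{relAC}. The first step is to move to the co-moving frame and introduce a dynamical phase $\delta(t)$ by writing
\begin{equation*}
(u,v)(x,t) = (U,V)(\xi) + (p,q)(\xi,t), \qquad \xi = x - c_\ast t + \delta(t),
\end{equation*}
with $\delta(0)=0$ and $\delta$ to be determined. Substituting into \eqref{relAC} and using the profile equations yields an evolution equation of the form
\begin{equation*}
(p,q)_t = \mathcal{L}(p,q) + \dot\delta\,\partial_\xi\bigl[(U,V) + (p,q)\bigr] + \mathcal{N}(p),
\end{equation*}
where $\mathcal{L}$ is the linearization of \eqref{relAC} in the co-moving frame, a first-order hyperbolic system with variable coefficient $f'(U)$ and relaxation damping, and $\mathcal{N}(p)=(f(U+p)-f(U)-f'(U)p,\,0)$ is quadratic in $p$ since $f\in C^3$.

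Next I would invoke the spectral information established in the preceding sections. Translation invariance implies that $\partial_\xi(U,V)$ spans a one-dimensional kernel of $\mathcal{L}$. Because $f'(0), f'(1)<0$, the constant-coefficient operators obtained in the limit $\xi\to\pm\infty$ are uniformly stable, so $\ess(\mathcal{L})$ is contained in a half-plane $\{\mathrm{Re}\,\lambda\le-\theta_0\}$ for some $\theta_0>0$; the point spectrum to the right reduces to the simple translational eigenvalue $\lambda=0$. This produces a spectral gap $\theta>0$ with $\sigma(\mathcal{L})\setminus\{0\}\subset\{\mathrm{Re}\,\lambda\le-\theta\}$. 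Let $\Psi$ be an exponentially localized left zero eigenfunction of $\mathcal{L}$, localization being guaranteed by Theorem~\ref{thm:existence}(b). Imposing $\langle (p,q)(\cdot,t),\Psi\rangle=0$ determines $\delta(t)$ uniquely and smoothly via the implicit function theorem as long as $|(p,q)(\cdot,t)|_{{}_{H^1}}$ remains small, with $|\dot\delta|\le C|(p,q)|_{{}_{H^1}}$. On the spectral complement of $\mathrm{span}\,\partial_\xi(U,V)$, the restricted operator then generates a $C^0$-semigroup on $H^1(\R;\R^2)$ decaying at rate $\theta$.

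To close the argument, set $E(t):=|(p,q)(\cdot,t)|_{{}_{H^1}}$. By Duhamel's formula, the semigroup decay, the Sobolev embedding $H^1(\R)\hookrightarrow L^\infty(\R)$ (which implies $|\mathcal{N}(p)|_{{}_{H^1}}\le C\,|p|_{{}_{H^1}}^2$), and the bound on $\dot\delta$, a standard continuation argument yields constants $\varepsilon,C,\theta>0$ such that $E(0)<\varepsilon$ implies $E(t)\le CE(0)\,e^{-\theta t}$ for all $t\ge 0$. Integrating $|\dot\delta(t)|\le C\,E(t)$ shows that $\delta(t)$ converges exponentially fast to a finite limit $\delta_\infty\in\R$; translating back to the original frame produces the stated estimate with shift $\delta:=\delta_\infty$.

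The main obstacle I anticipate is converting the spectral gap into a genuine $H^1$-semigroup decay estimate in the absence of any parabolic smoothing: the hyperbolic system \eqref{relAC} does not gain regularity along the flow, so the linear decay must be obtained directly in $H^1$, for instance via a Gearhart--Pr\"uss type argument combined with carefully tailored energy estimates exploiting the relaxation damping $-q/\tau$. Constructing an $H^1$-coercive energy whose dissipation uniformly controls both components of the perturbation is the central technical step; here the smallness condition $\tau<\tau_m$ plays an essential role by ensuring positivity of the effective damping coefficient $1-\tau f'(U)$ appearing in the one-field formulation \eqref{relAConef}.
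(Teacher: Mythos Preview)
Your proposal is correct and follows a well-established route, but it differs from the paper's argument in one structural respect. You use a \emph{time-dependent} phase $\delta(t)$, fixed at each instant by the orthogonality constraint $\langle (p,q)(\cdot,t),\Psi\rangle=0$, and then show that $\delta(t)$ converges to a limit $\delta_\infty$. The paper instead follows Sattinger's approach: it fixes a \emph{single} asymptotic shift $\delta=\varepsilon\eta$ from the outset, decomposes the perturbation as $w=\alpha(t)\phi'(\varepsilon\eta)+\omega(t)$ with $\omega$ in the spectral complement, writes Duhamel representations for $(\alpha,\omega)$, and determines $\eta$ by the requirement $\alpha(+\infty)=0$. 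The triple $(\eta,\alpha,\omega)$ is then obtained by the Implicit Function Theorem in the Banach space $\R\times C^0_\theta(\R_+;\R)\times C^0_\theta(\R_+;\mathcal{W})$ of exponentially decaying trajectories (Theorem~\ref{thm:abstract}). Your modulational approach is arguably more dynamical and gives explicit information on how the phase evolves; the paper's approach avoids tracking an ODE for $\delta$ at the price of a slightly more abstract fixed-point setup. Both rely on the same linear ingredients---the spectral gap of Theorem~\ref{thm:spectrum}, the resolvent bounds of Propositions~\ref{prop:resesti0}--\ref{prop:resesti1}, and the Gearhart--Pr\"uss decay of Theorem~\ref{thm:linstab}---and both need the quadratic $H^1$ bound on $\mathcal{N}(p)$ via Sobolev embedding, which the paper verifies explicitly when checking hypothesis~\textbf{H3}.

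Your closing paragraph correctly identifies the crux: converting spectral information into $H^1$-semigroup decay without parabolic smoothing. The paper handles this exactly as you anticipate, through Gearhart--Pr\"uss combined with resolvent estimates in $H^m$ obtained by the approximate-diagonalization technique of Mascia--Zumbrun, where the condition $\tau<\tau_m$ indeed enters to ensure the diagonal damping terms $\mu_1^\pm$ have the required sign.
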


This statement is the final outcome of some intermediate fundamental steps
which are not detailed at this stage for the sake of simplicity of presentation.
Actually, Theorem \ref{thm:nonlinstab} is proved by following a well-estabilished approach
based on linearization, spectral analysis, linear and nonlinear stability.
All of these steps are developed in a complete way, altogether providing a sound rigorous basis 
to the stability of propagation fronts for the Allen--Cahn model with relaxation.
At this point, we find it suitable to mention recent work by Rottmann-Matthes \cite{Rot11,Rot12},
who proves that spectral stability implies orbital stability of traveling waves for a large class of
hyperbolic systems (which includes the Allen-Cahn model with relaxation \eqref{relAC}), and provides
numerical evidence of spectral stability with spectral gap for the particular case of system \eqref{relAC}
by approximating the spectrum of the linearized operator around the wave with periodic boundary conditions, 
which reproduce the point and essential spectrum on the unbounded domain accurately (cf. \cite{SS00}).
These numerical observations, however, do not constitute a proof of stability.
Our contribution is a self-contained study of the dynamics of traveling fronts for the particular
model \eqref{relAC} (which warrants note because of its importance in the theory of hyperbolic diffusion),
as well as a complete analysis of their stability.

Both the existence and the stability analyses are complemented with a numerical study confirming the
theoretical results and providing additional relevant information on what should be expected
beyond the boundaries of the proved statements. In the first part, we numerically determine the values
of the propagation speeds and discuss their relation with the corresponding value in the case
of the standard Allen--Cahn equation.
Interestingly enough, the model with relaxation exhibits in some regimes fronts
that are faster with respect to their corresponding parabolic ones.
At the end of the paper, we consider some numerical simulations relative to perturbations
of the traveling front, restricting the attention to the case of standard and perturbed Riemann problems.
The outcome is a strong numerical evidence that the domain of attractivity of the wave is wider than
what described by the stability result of Theorem \ref{thm:nonlinstab}
(see Figure \ref{fig:Random2tot}, detailed description in Section \ref{sect:numerics}).
The algorithm used in this part is based on a reformulation of \eqref{relAC}, discretized by using a
standard finite-difference method with upwinding of the space derivatives.\\

\begin{figure}\centering
\resizebox{0.8\hsize}{!}{\includegraphics*{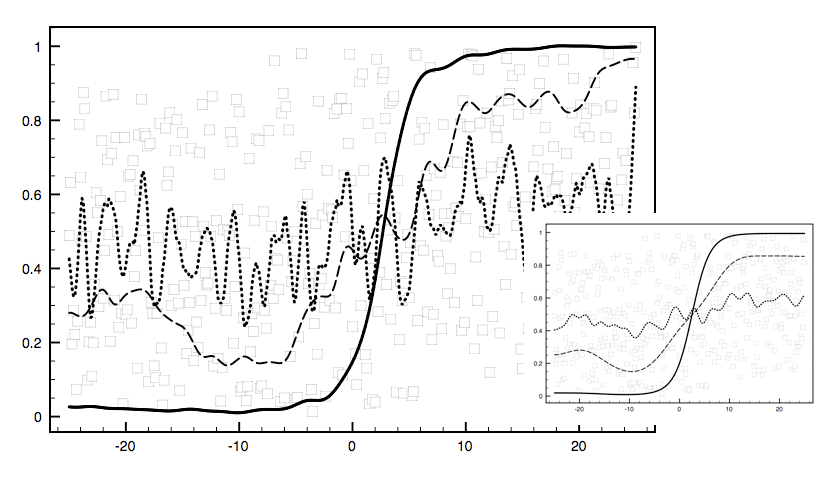}}
\caption{Random initial datum in $(-\ell,\ell)$, $\ell=25$ (squares).
Solution profiles for the Allen--Cahn equation with relaxation at time $t=0.5$ (dot),
$t=7.5$ (dash), $t=15$ (continuous).
For comparison, in the small window, the solution to the parabolic Allen--Cahn equation.
For details, see Section \ref{sect:numerics}.
}\label{fig:Random2tot}
\end{figure}

\noindent \textbf{Plan of the paper.}
This work is divided into four more sections. Section \ref{sect:existence} deals with the existence of the propagation fronts
together with the estabilishment of their main properties, which are essential for completing the stability arguments. 
It contains the detailed proof of Theorem \ref{thm:existence}, and it is based on a phase-plane analysis
that takes advantage of a specific monotonicity property of the system under consideration.
The content of Section \ref{sect:linspecstab} is the spectral analysis of the linearized operator around the front.
The main result is Theorem \ref{thm:spectrum}, estabilishing the spectral stability of the wave.
The proof is based on a perturbation argument at $\tau=0$, combined with a continuation 
procedure to show that the same spectral structure holds all along the interval $(0,\tau_m)$.
The first part of Section \ref{sect:linnonlinstab} deals with the linear stability property.
This is consequence of spectral stability because of the hyperbolic
nature of system \eqref{relAC} together with an additional resolvent estimate controling
the behavior at large frequencies. 
With such tools at hand, and following the classical ideas of Sattinger developed for the parabolic setting 
\cite{Satt76}, it is possible to prove the nonlinear stability theorem,
taking advantage of the presence of a spectral gap separating the zero eigenvalue
from the rest of the spectrum. That is the content of the final part of the Section.
Finally, Section \ref{sect:numerics} is entirely devoted to the numerical approximation of \eqref{relAC}.
The principal part of the system is diagonalized and
a finite-difference upwind approximation is considered.
First, we analyze the Riemann problem connecting the two asymptotic states of the front.
Such choice is used to compare the asymptotic speed of propagation with the values
determined in Section \ref{sect:existence} and to show the numerical evidence of
convergence to the front. Then, as large perturbations of the front, we consider initial data which
are randomly chosen with some bias on the values at the left and at the right, mimicking an initial configuration which
vaguely resembles the transition from $u=0$ to $u=1$.
The large-time convergence to the propagation front is evident from the numerical output.\\

\noindent \textbf{Notations.}
We use lowercase boldface roman font to indicate column vectors (e.g., $\mathbf{w}$), and with the exception
of the identity matrix $I$ we use upper case boldface roman font to indicate square matrices (e.g., $\mathbf{A}$).
Elements of a matrix $\mathbf{A}$ (or vector $\mathbf{w}$) are denoted $A_{ij}$ (or $w_j$, respectively).
Linear operators acting on infinite-dimensional spaces are indicated with calligraphic letters (e.g., $\cL$ and $\cT$).
For a complex number $\lambda$, we denote complex conjugation by $\overline{\lambda}$ and denote its real
and imaginary parts by $\Real \lambda$ and $\Imag \lambda$, respectively.
Complex transposition of vectors or matrices are indicated by the symbol ${}^*$ (e.g., $\mathbf{w}^*$ and $\mathbf{A}^*$),
whereas simple transposition is denoted by the symbol $\mathbf{A}^\top$.
For a linear operator $\mathcal{L}$, its formal adjoint is denoted by $\mathcal{L}^*$.
Given $m\in\mathbb{N}$, the space $H^m(\R;\C^n)$ is composed of vector functions from $\R$ to $\C^n$,
where each component belongs to the Sobolev space $H^m(\R;\C)$.
It is endowed with the standard scalar product. Finally, we denote derivatives with respect to the indicated argument by `$'$' (e.g., $f'(u)$, $a'(x)$). Partial or total derivatives with respect to spatial and time variables (e.g. $x$ and $t$) are indicated by lower subscript. For the sake of simplicity, we sometimes use the symbol $\partial$ to indicate the latter when appropriate.

\section{Propagating fronts}\label{sect:existence}

Parabolic Allen--Cahn equation \eqref{parAC} supports traveling waves connecting the stable
states $u=0$ and $u=1$.
Also, the propagating speed and, up to translations, the wave profile, are unique.
For special forms of the reaction function $f$, existence of the wave can be proved by
determining explicit formulae for speed and profile.
For a general bistable $f$, the proof is based on a phase-plane analysis for the corresponding
nonlinear ordinary differential system.
Uniqueness arises as consequence of the fact that the heteroclinic orbit 
linking the asymptotic states is a saddle/saddle connection.

For system \eqref{relAC}, it is not anymore possible to find explicit traveling wave solutions
even for $f$ of polynomial type.
Phase-plane analysis, instead, is a more flexible approach and it can be applied also 
in the case of the Allen--Cahn equation with relaxation, as it is shown in the sequel.
In Subsection \ref{secnumspeed}, we also tackle the problem of the numerical evaluation of the
propagation speed in the case of a third-order polynomial function $f$, analyzing the relation
between the velocities of the hyperbolic and the parabolic Allen--Cahn equations for different
choices of the parameters $\alpha$ and $\tau$.

\subsection{Existence of the traveling wave}
Traveling waves of \eqref{relAC} are special solutions of the form $(u,v)(x,t)=(U,V)(\xi)$ where
$\xi=x-ct$ and $c$ is a real parameter.
The couple $(U,V)$ is referred to as the {\it profile of the wave} and the value $c$ as
its {\it propagation speed}.
Here, we are concerned with traveling waves satisfying the asymptotic
conditions \eqref{asymptoticstates}, so that the corresponding solution describes how
the relaxation system resolves the transition from one stable state to the other. 
In this respect, the value of the speed $c$ is very significant, since it describes how fast and
in which direction the switch from value $u=0$ to value $u=1$ is performed.

Existence of a traveling wave for \eqref{relAC} satisfying \eqref{asymptoticstates} can be deduced
by the fact that the ordinary differential equation for the profile, obtained by inserting the \textit{ansatz}
$u(x,t)=U(x-ct)$ in the one-field equation \eqref{relAConef}, is convertible into the corresponding
equation arising in the case of reaction-diffusion models with density-dependent diffusion.
Then, one applies a general result proven by Engler \cite{Engl85}, that relates the existence of
traveling wave solutions of reaction-diffusion equations with constant diffusion coefficients to the
ones of the density-dependent diffusion coefficient case. 
Considering such path too tangled, we prefer to give an explicit proof of the existence by dealing
directly with the system in the original form \eqref{relAC}.

Substituting the form of the traveling wave solutions,
we obtain the system of ordinary differential equations
\begin{equation}\label{twode0}
	cU_\xi+V_\xi +f(U)=0,\qquad
	U_\xi+c\tau V_\xi-V=0,
\end{equation}
to be complemented with the asymptotic boundary conditions \eqref{asymptoticstates}.
The value $c$ in \eqref{twode0} is an unknown and its determination is part of the problem.

\begin{proposition}\label{prop:properties}
Assume hypothesis \eqref{bistablef} and let $\tau \in [0,\tau_m)$.
If $(U,V)$ is a solution to \eqref{twode0} satisfying the asymptotic conditions
\eqref{asymptoticstates}, then  \par
\textrm{\rm (i)} the velocity $c$ has the same sign of $-\int_{0}^{1} f(u)\,du$;\par
\textrm{\rm (ii)}  there holds
\begin{equation}\label{subchar}
	c^2 \tau <1.
\end{equation}
\end{proposition}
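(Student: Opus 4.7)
My plan is to prove the two assertions in reverse order: (ii) first, then (i), since the natural derivation of (i) passes through a scalar one-field profile equation that is well defined only when $c^2\tau\neq 1$.

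For (ii), I proceed by contradiction, treating $c^2\tau>1$ and $c^2\tau=1$ separately. If $c^2\tau>1$, the coefficient matrix $\bigl(\begin{smallmatrix}c & 1 \\ 1 & c\tau\end{smallmatrix}\bigr)$ in \eqref{twode0} is still invertible, so the system defines a planar ODE for $(U,V)$ whose Jacobian at each endpoint $(U_\pm,0)$ is $(1-c^2\tau)^{-1}\bigl(\begin{smallmatrix}c\tau f'(U_\pm) & 1 \\ -f'(U_\pm) & -c\end{smallmatrix}\bigr)$, with determinant $f'(U_\pm)/(1-c^2\tau)>0$ (product of two negative numbers) and trace $c(\tau f'(U_\pm)-1)/(1-c^2\tau)$ carrying the sign of $c$. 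Hence both $(0,0)$ and $(1,0)$ are hyperbolic sources (if $c>0$) or hyperbolic sinks (if $c<0$), and no non-constant heteroclinic orbit can connect them: in the source case the forward asymptote $(1,0)$ has only the trivial stable manifold, and in the sink case the backward asymptote $(0,0)$ has only the trivial unstable manifold. In the degenerate case $c^2\tau=1$, the coefficient matrix drops rank, and the two equations are compatible only under the algebraic constraint $V=-f(U)/c$; then $U$ satisfies the scalar ODE $U_\xi=-cf(U)/(c^2-f'(U))$, whose denominator is strictly positive because $c^2=1/\tau>\sup|f'|$. The zeros $\{0,\alpha,1\}$ of $f$ are the sole equilibria of this scalar flow, so by uniqueness any orbit is trapped in $[0,\alpha]$ or $[\alpha,1]$ and cannot transit from $0$ to $1$. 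This contradicts the prescribed asymptotics and forces \eqref{subchar}.

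With (ii) in hand, $1-c^2\tau>0$ and I can eliminate $V$ from \eqref{twode0} using $V=(1-c^2\tau)U_\xi-c\tau f(U)$, obtaining the one-field profile equation
\begin{equation}\label{oneprof}
(1-c^2\tau)U_{\xi\xi}+c\bigl(1-\tau f'(U)\bigr)U_\xi+f(U)=0,
\end{equation}
together with $U(\pm\infty)=U_\pm$ and $U_\xi(\pm\infty)=0$. For (i), I multiply \eqref{oneprof} by $U_\xi$ and integrate over $\R$: the top-order term gives $\tfrac12(1-c^2\tau)[U_\xi^2]_{-\infty}^{+\infty}=0$, and the substitution $u=U(\xi)$ gives $\int_\R f(U)U_\xi\,d\xi=\int_0^1 f(u)\,du$, yielding
$$c\int_\R\bigl(1-\tau f'(U(\xi))\bigr)U_\xi^2\,d\xi=-\int_0^1 f(u)\,du.$$
The hypothesis $\tau<\tau_m$ ensures $1-\tau f'(U)\geq 1-\tau\sup|f'|>0$ uniformly in $\xi$, and $U_\xi\not\equiv 0$ since $U_-\neq U_+$, so the left-hand integral is strictly positive and $c$ inherits the sign of $-\int_0^1 f(u)\,du$.

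The main obstacle, in my view, is the borderline $c^2\tau=1$ in (ii): the profile system is no longer an ODE but a differential-algebraic system, and one must justify carefully that every admissible $C^1$ solution actually collapses onto the stated scalar reduction before the trapping argument can be invoked. The remaining steps are standard phase-plane analysis and integration by parts.
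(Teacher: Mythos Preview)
Your proof is correct. For part (i) you arrive at exactly the same integral identity as the paper,
\[
c\int_\R \bigl(1-\tau f'(U)\bigr)U_\xi^2\,d\xi \;=\; -\int_0^1 f(u)\,du,
\]
only you reach it by first passing to the one-field profile equation \eqref{oneprof} and then multiplying by $U_\xi$, whereas the paper manipulates the two-component system directly to the pointwise identity $\partial_\xi\bigl[\tfrac12(1-c^2\tau)U_\xi^2+F(U)\bigr]+c(1-\tau f'(U))U_\xi^2=0$ and integrates. The content is identical. (Incidentally, your reordering is not strictly needed: the derivation of \eqref{oneprof} and of the integral identity for (i) goes through verbatim for any value of $1-c^2\tau$, since that coefficient is never inverted.)

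For part (ii) your route is genuinely different. The paper reuses the pointwise identity above, integrates it over a half-line $(-\infty,\xi)$, and picks $\xi$ with $U(\xi)\in(0,\alpha)$ so that the right-hand side is strictly positive; this forces $\tfrac12(1-c^2\tau)U_\xi^2>0$ and hence $1-c^2\tau>0$ in one stroke, covering both the strict and borderline cases simultaneously. Your argument is instead dynamical: if $c^2\tau>1$ the Jacobians at both endpoints have positive determinant and trace of the same sign, so both are sources or both sinks and no heteroclinic exists; if $c^2\tau=1$ the system collapses to a scalar ODE whose equilibrium at $\alpha$ blocks any transit from $0$ to $1$. The paper's approach is shorter and more unified, exploiting the same energy identity for both parts. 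Yours is more structural, explaining the subcharacteristic condition through the geometry of the phase plane rather than an energy balance, at the cost of a case split. Your stated concern about the borderline case is overcautious: the constraint $V=-f(U)/c$ follows \emph{algebraically} from the two equations once $c^2\tau=1$ (subtract $c\tau$ times the first from the second), so no limiting procedure or extra regularity is needed.
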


\begin{proof}
\textrm{\rm (i)} Multiplying the first equation in \eqref{twode0} by $U_\xi$
and using the second, we obtain
\begin{equation*}
	\begin{aligned}
	0&=c\left|{U_\xi}\right|^2+{V_\xi}{U_\xi}+f(U){U_\xi}\\
	  &=c\left|{U_\xi}\right|^2
	  	+ ({U_\xi} 
	 	+c\tau{V_\xi})_\xi {U_\xi}+f(U){U_\xi}\\
	&=c\left|{U_\xi}\right|^2+{U_\xi}{U_{\xi\xi}} 
	 	-c\tau ( c{U_\xi}+f(U) )_\xi{U_\xi}
		+f(U){U_\xi}\\
	&=c(1-\tau f'(U))\left|{U_\xi}\right|^2
		+(1-c^2\tau){U_\xi}{U_{\xi\xi}} 
		+f(U){U_\xi}.
	\end{aligned}
\end{equation*}
Thus, denoting by $F$ a primitive of $f$, there holds
\begin{equation}\label{waverelation}
	 \big( \tfrac{1}{2}(1-c^2\tau)\left|{U_\xi}\right|^2+F(U) \big)_\xi 
	+c(1-\tau f'(U))\left|{U_\xi}\right|^2=0.
\end{equation}
Integrating in $\R$, we infer the relation
\begin{equation*}
	c\int_{\R} (1-\tau f'(U))\left|{U_\xi}\right|^2\,d\xi
	 =F(0)-F(1)=-\int_{0}^{1} f(u)\,du.
\end{equation*}
Since $\tau<\tau_m$, then $\tau f'(u)<1$ for any $u$ and part (i) follows.

\textrm{\rm (ii)} The case $c=0$ is obvious.
Let us assume $c<0$ (the opposite case being similar).
Integrating the equality \eqref{waverelation} in $(-\infty,\xi)$, we get
\begin{equation*}
	\tfrac{1}{2}(1-c^2\tau)\left|{U_\xi}\right|^2
		=F(0)-F(U(\xi))-c\int_{-\infty}^{\xi}(1-\tau f'(U))\left|{U_\xi}\right|^2\,d\xi.
\end{equation*}
Choosing $\xi$ such that $U(\xi)\in(0,\alpha)$, since $F$ is strictly decreasing
in $(0,\alpha)$, the right-hand side is strictly positive and thus \eqref{subchar} holds.
%\flushright\qed
\end{proof}

Condition \eqref{subchar} should be regarded as a \textit{subcharacteristic condition}.
Indeed, it has a similar interpretation as the corresponding relation for hyperbolic systems
with relaxation: the equilibrium wave velocity cannot exceed the characteristic speed of
the perturbed wave equation \eqref{relAConef}. 

Thanks to \eqref{subchar}, we are allowed to introduce the independent variable
$\eta=(1-c^2\tau)^{-1}\xi$,  so that system \eqref{twode0} becomes
\begin{equation}\label{twode1}
	U_\eta=\phi(U,V):=c\tau f(U)+V,\qquad
	V_\eta=\psi(U,V):=-f(U)-cV.
\end{equation}
Departing from a detailed description of the unstable and stable manifold of the singular points
$(0,0)$ and $(1,0)$, respectively, it is possible to show the existence of a saddle/saddle connection
between the asymptotic states required by \eqref{asymptoticstates}.
The existence result is based on the analysis of the limiting regimes $c\to \pm 1/\sqrt{\tau}$
of the system \eqref{twode1} and on their (monotone) variations for the values in between, using the
notion of a {\it rotated vector field} (cf. \cite{Perk93}).

\begin{proof}[Theorem \ref{thm:existence}]
1. The linearization of \eqref{twode1} at $(\bar U,0)$ with $f(\bar U)=0$ is described
by the jacobian matrix calculated at $(\bar U,0)$
\begin{equation*}
	\frac{\partial(\phi,\psi)}{\partial(u,v)}
	=\begin{pmatrix} c\tau f'(\bar U)	& \;\;1\\ - f'(\bar U) & \;\; -c \end{pmatrix}
\end{equation*}
whose determinant is $(1-c^2\tau)f'(\bar U)$.
In particular, if $f'(\bar U)<0$, the singular point $(\bar U,0)$ is a saddle.
The eigenvalues are the roots of the polynomial
\begin{equation*}
	p(\mu)=\mu^2+c(1-\tau f'(\bar U))\mu+(1-c^2\tau)f'(\bar U)
\end{equation*}
and they are given by
\begin{equation*}
	\mu_\pm(\bar U)=- \tfrac{1}{2}c(1-\tau f'(\bar U))\pm
		\tfrac{1}{2}\sqrt{c^2(1-\tau f'(\bar U))^2-4 f'(\bar U)}
\end{equation*}
with corresponding eigenvectors $\mathbf{r}_\pm(\bar U)=(1,\mu_\pm(\bar U)-c\tau f'(\bar U))^\top$.

For later use, let us note that, as a consequence of \eqref{smalltau2}, 
\begin{equation}\label{lateruse}
	\mu_+(0)-c\tau f'(0)>-\sqrt{\tau}\,f'(0)
	\quad\textrm{and}\quad
	\mu_-(1)-c\tau f'(1)<\sqrt{\tau}\,f'(1),
\end{equation}
for $c\in(-1/\sqrt{\tau},1/\sqrt{\tau})$.
Indeed, for $f'(\bar U)<0$, there holds
\begin{equation*}
	p\bigl((c\tau\pm\sqrt{\tau})f'(\bar U)\bigr)
		=\sqrt{\tau}\left(\frac{1}{\sqrt{\tau}}\pm c\right)(1-\tau f'(\bar U))f'(\bar U)<0,
\end{equation*}
so that the values $(c\tau\pm\sqrt{\tau})f'(\bar U)$ 
belong to the interval $(\mu_-(\bar U),\mu_+(\bar U))$.
\vskip.25cm

2. Given $c\in (-1/\sqrt{\tau},1/\sqrt{\tau})$, let us denote by ${\mathbb{U}}_{0}(c)$ the unstable
manifold of the singular point $(0,0)$ and by ${\mathbb{S}}_{1}(c)$ the stable manifold of $(1,0)$.
Also, let ${\mathbb{U}}_{0}^+(c)$ be the intersection  of ${\mathbb{U}}_{0}(c)$ with the strip
$[0,\alpha]\times\R$ and let ${\mathbb{S}}_{1}^-(c)$ the intersection of ${\mathbb{S}}_{1}(c)$ with
the strip $[\alpha,1]\times\R$.
Such sets are graphs of appropriate solutions to the first order equation
\begin{equation}\label{traj}
	\frac{dV}{dU}=-\frac{f(U)+cV}{c\tau f(U)+V}.
\end{equation}
Thanks to \eqref{lateruse}, ${\mathbb{U}}_{0}^+(c)$ lies above the graph of 
the function $v=-\sqrt{\tau} f(u)$ in a neighborhood of $(0,0)$.
In addition, for $u\in(0,\alpha)$, there holds
\begin{equation*}
	(\phi,\psi)\cdot(\sqrt{\tau}\,f'(u),1)\bigr|_{v=-\sqrt{\tau} f(u)}
	=-\left(1-c\sqrt{\tau}\right)(1-\tau f'(u))f(u)>0
\end{equation*}
showing that no trajectories may trespass the graph of the function $v=-\sqrt{\tau} f(u)$
for $u\in(0,\alpha)$.
In particular, the set ${\mathbb{U}}_{0}^+(c)$ lies above the graph $v=-\sqrt{\tau} f(u)$
and hits the line $u=\alpha$ for a given value $v_0(c)\in(0,+\infty)$.
Similar considerations show that ${\mathbb{S}}_{1}^-(c)$ stays above the  graph
$v=\sqrt{\tau} f(u)$ and touches the straight line $u=\alpha$ for a given value $v_1(c)\in(0,+\infty)$.

3. To determine how the unstable/stable manifolds change with the parameter
$c$, let us observe that
\begin{equation*}
	\begin{aligned}
	(\phi,\psi,0)^\top\land(\partial_c \phi,\partial_c\psi,0)^\top
		&=\det\begin{pmatrix} 
			\mathbf{i}		&	\mathbf{j}		&	\mathbf{k}	\\
			c\tau f(U)+V 	&	-f(U)-cV 		& 	0		\\
			\tau f(U)	 	&	-V			& 	0
				\end{pmatrix}\\
		&=\bigl(\tau f(U)^2-V^2\bigr)\mathbf{k}.
	\end{aligned}
\end{equation*}
Thus, the vector field $(\phi,\psi)$ defining \eqref{twode1} rotates clockwise in the region
$\{(u,v)\,:\,v\geq \sqrt{\tau}|f(u)|\}$ as the parameter $c$ increases.
As a consequence, the curves describing ${\mathbb{U}}_0^+(c)$ and
${\mathbb{S}}_1^-(c)$ rotate clockwise when $c$ increases and 
the functions $c\mapsto v_0(c)$ and $c\mapsto v_1(c)$ are, respectively,
strictly monotone decreasing and strictly monotone increasing.

4. To conclude the existence of the orbit, we analyze the behavior of \eqref{twode1}
in the limiting regimes $c\to \pm 1/\sqrt{\tau}$.
For $c=-1/\sqrt{\tau}$, the system reduces to 
\begin{equation*}
	U_\eta=V-\sqrt{\tau} f(U),\qquad
	V_\eta=\frac{1}{\sqrt{\tau}}\Big(V-\sqrt{\tau} f(U)\Big).
\end{equation*}
In particular, all the trajectories lie along straight lines of the form 
\begin{equation*}
	v=\frac{u}{\sqrt{\tau}}+C,\qquad\qquad C\in\R,
\end{equation*}
and converge, as $t\to -\infty$, to the unique intersection between the invariant straight line
and the graph of the function $\sqrt{\tau} f$
(see Fig. \ref{fig:pplaneminus}, depicting the $(U,V)$ plane for the particular case of
the cubic reaction function $f$).
\begin{figure}\centering%\sidecaption
\resizebox{0.8\hsize}{!}{\includegraphics*{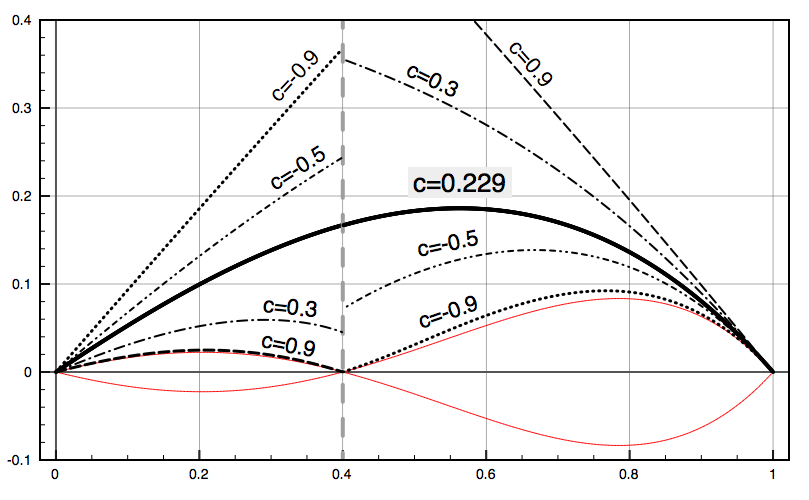}}
\caption{Plane $(U,V)$ in the case $f(u)=u(1-u)(u-\alpha)(0.5+u)$, 
$\alpha=0.4$, $\tau=1$.
The manifolds ${\mathbb{U}}_{0}^+$ and ${\mathbb{S}}_{1}^-$ are represented
for different choices of $c\in(-1,1)$.
Note the monotonicity with respect to the parameter $c$.
For the choice $c=0.229$ the two curves intersect at $u=\alpha=0.4$.
The thin (red; online version) lines on the bottom are the graphs of the functions
$\pm\sqrt{\tau}\,f$.
}\label{fig:pplaneminus}
\end{figure}
The unstable manifold ${\mathbb{U}}_{0}(c)$ of the singular point $(0,0)$ is the
straight line  $v=u/\sqrt{\tau}$, while the center-stable manifold 
${\mathbb{S}}_{1}(c)$  of $(1,0)$ is the graph of the function $\sqrt{\tau} f$.
In particular, there holds
\begin{equation*}
	{\mathbb{U}}_{0}(-1/\sqrt{\tau})\bigr|_{U=\alpha}=\alpha(1,1/\sqrt{\tau}\,),\qquad
	{\mathbb{S}}_{1}(-1/\sqrt{\tau})\bigr|_{U=\alpha}=\alpha(1, 0),
\end{equation*}
that gives
\begin{equation*}
	v_0(-1/\sqrt{\tau})=\alpha/\sqrt{\tau},\qquad v_1(-1/\sqrt{\tau})=0.
\end{equation*}
The situation for $c=1/\sqrt{\tau}$ is similar, yielding
\begin{equation*}
	v_0(1/\sqrt{\tau})=0,\qquad
	v_1(1/\sqrt{\tau})=(1-\alpha)/\sqrt{\tau}.
\end{equation*}
The conclusion is at hand, since
\begin{equation*}
	(v_1-v_0)(-1/\sqrt{\tau})=-\alpha/\sqrt{\tau}<0
		<(1-\alpha)/\sqrt{\tau}=(v_1-v_0)(1/\sqrt{\tau}),
\end{equation*}
implying, together with the monotonicity of $v_0$ and $v_1$,
that there exists a unique value $c_\ast$ such that $v_1(c_\ast)=v_0(c_\ast)$,
and then system \eqref{twode1} possesses a heteroclinic orbit.

5. To verify the monotonicity of the component $U$, we note that from system \eqref{twode1}
one has $U_\eta = c\tau f(U) + V$. Let us suppose that $c > 0$.
Assume, by contradiction, that $U_\eta(\eta_*) = 0$ for some $\eta_* \in \R$.
Therefore $V_* = - c\tau f(U_*)$, where $V_* = V(\eta_*)$, $U_* = U(\eta_*)$.
Since $V$ is positive for all $\eta \in \R$, this implies that $f(U_* ) < 0$
and, necessarily, that $U_* \in (0,\alpha)$. The subcharacteristic condition \eqref{subchar},
however, yields
\begin{equation*}
	0 < -c \tau f(U_*) < - \sqrt{\tau} f(U_*),
\end{equation*}
which is a contradiction with the fact that the trajectory $(U,V)$ lies above the graph of the
function $v = - \sqrt{\tau} f(u)$ for $u \in (0,\alpha)$.
The case $c < 0$ is analogous and a similar argument for $U_* \in (\alpha,1)$ applies.
Therefore, $U_\eta$ never changes sign along the trajectory.
Since $U$ connects $U(-\infty) = 0$ with $U(+\infty) = 1$, the function is strictly increasing
and $U_\eta > 0$ for all $\eta \in \R$.
The subcharacteristic condition guarantees that the same statement holds for the
original profile in the $\xi$ variable, that is, $U_\xi > 0$.

6. Exponential decay of the profile is a consequence of the hyperbolicity of the
non-degenerate end-points $(0,0)$ and $(1,0)$.
Indeed, rewriting system \eqref{twode0} (in the original moving variable $\xi$) as
\begin{equation*}
  	U_\xi = (1 - c^2 \tau)^{-1}(c\tau f(U) + V),\qquad
	V_\xi = -(1 - c^2 \tau)^{-1}(\tau f(U) + cV),
\end{equation*}
and linearizing the right hand side around $(\bar U,0)$, one readily notes that its eigenvalues
are the same as the eigenvalues of the linearization of \eqref{twode1} at the same point
multiplied by a factor $(1 - c^2 \tau)^{-1}$.
The positive (unstable) eigenvalue at $(0,0)$ is
\begin{equation*}
	\mu_u = \tfrac{1}{2}(1 - c^2 \tau)^{-1}\big( -c(1-\tau f'(0))
		+ \sqrt{c^2(1-\tau f'(0))^2 - 4 f'(0)}\big) \, > 0,
\end{equation*}
and the orbit decays to $(0,0)$ as $\xi \to - \infty$ with exponential rate
\begin{equation*}
	|(U,V)(\xi)| \leq C \exp (\mu_u \xi).
\end{equation*}
Likewise, the negative (stable) eigenvalue at $(1,0)$ is 
\begin{equation*}
	\mu_s = \tfrac{1}{2}(1 - c^2 \tau)^{-1}\big( -c(1-\tau f'(1))
		- \sqrt{c^2(1-\tau f'(1))^2 - 4 f'(1)}\big) \, < 0,
\end{equation*}
and the orbit decays to $(1,0)$ as $\xi \to + \infty$ with rate
\begin{equation*}
	|(U,V)(\xi)| \leq C \exp (-|\mu_s| \xi).
\end{equation*}
Setting $\nu = \nu(\tau) := \min \{\mu_u, |\mu_s|\} > 0$, we find that
\begin{equation*}
	\left| \frac{d^j}{d\xi^j}(U-U_\pm,V)(\xi)\right| \leq C \exp (-\nu |\xi|),
	\qquad \xi\in\R,
\end{equation*}
for some constant $C > 0$ and with $j = 0,1,2$.

7. Finally, we have to show continuity of the speed $c_\ast$ with respect to $\tau$, 
as stated in (c). To this aim, denoting explicitly the dependence on $\tau$, let us consider the function
\begin{equation*}
	\delta v(c,\tau):=v_0(c,\tau)-v_1(c,\tau)
\end{equation*}
with $v_0$ and $v_1$ defined at Step 2.
For any $\tau$, the value $c_\ast$ is determined implicitly by the equality $\delta v(c,\tau)=0$.
Also, smooth dependence with respect to parameters $c$ and $\tau$ of the system \eqref{twode1}
implies that the function $\delta v$ is continuous with respect to its variables.
Moreover, since $v_0$ is monotone decreasing and $v_1$ monotone increasing as functions of $c$
(see the end of Step 3.), the function $\delta v$ is monotone decreasing with respect to $c$.

Fix $\tau_0\in(0,\tau_m)$ and $\eta>0$.
Then, there holds
\begin{equation*}
	\delta v(c_\ast(\tau_0)+\eta,\tau_0)<\delta v(c_\ast(\tau_0),\tau_0)=0
		<\delta v(c_\ast(\tau_0)-\eta,\tau_0)
\end{equation*}
Since $\delta v$ is continuous, for any $\tau$ in a neighborhood of $\tau_0$, there holds
\begin{equation*}
	\delta v(c_\ast(\tau_0)+\eta,\tau)<0<\delta v(c_\ast(\tau_0)-\eta,\tau),
\end{equation*}
which gives, for $\delta v(\tau,c_\ast(\tau))=0$ and the monotonicity of $\delta v$,
\begin{equation*}
	c_\ast(\tau_0)-\eta<c_\ast(\tau)<c_\ast(\tau_0)+\eta.
\end{equation*}
The property relative to the limiting behavior as $\tau\to 0$ can be proved in the same way,
observing that the dependence of the differential system with respect to $\tau$ is smooth 
and that the heteroclinic orbit relative to the classical Allen--Cahn case can be obtained 
by the same procedure.
%\flushright\qed
\end{proof}

\subsection{Numerics of the propagating speed}\label{secnumspeed}

In the case of the Allen--Cahn equation \eqref{parAC} with $f$ given by \eqref{cubicf},
it is possible to determine an explicit form for the speed of the propagation front, namely
\begin{equation}\label{parACspeed}
	c_\ast^{0}:=\sqrt{\frac{2}{\kappa}}\left(\alpha-\frac12\right),
\end{equation}
and for the corresponding profile, which is given by a hyperbolic tangent.
For the Allen--Cahn system with relaxation, however, finding analogous
explicit formulas is awkward if not impossible. 
Some attempts to derive approximated expressions applying a series expansion
method have been performed in \cite{Abdu04,Fahm07,VanGVajr10} with very
restricted success.

Here, fixed the reaction strength $\kappa=1$,
we address the problem of computing numerically the value of $c_\ast^\tau$,
the speed of propagation corresponding to the relaxation parameter $\tau>0$,
and we discuss its dependency with respect to the parameters $\alpha, \tau$
and its relation with the limiting value $c_\ast^0$.
\begin{figure}\centering%\sidecaption
\resizebox{0.8\hsize}{!}{\includegraphics*{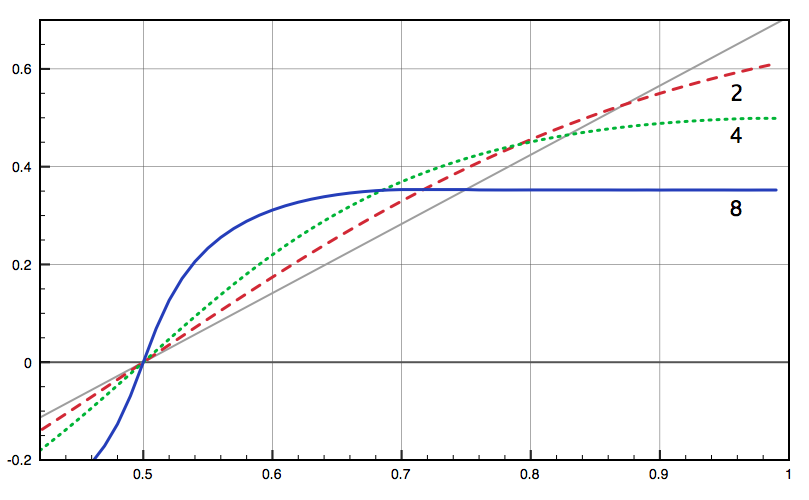}}
\caption{Graph of the map $\alpha\mapsto c_\ast^\tau$
for $\tau=2$ (dashed line), $\tau=4$ (dotted line), $\tau=8$ (continuous line).
The thin straight line corresponds to $c_\ast^{0}$ (parabolic Allen--Cahn equation).
}\label{fig:hetRelcVal}
\end{figure}

To determine reliable approximations of the value $c_\ast^\tau$, we first evaluate numerically
the functions $v_0=v_0(c)$ and $v_1=v_1(c)$, defined in the proof of Theorem \ref{thm:existence}.
In view of that, we compute the solutions $V_0=V_0(U)$ and $V_1=V_1(U)$ to \eqref{traj} with initial conditions
\begin{equation*}
	V_0(\delta)=\delta\bigl(\mu_+(0)-c\tau f'(0)\bigr),\qquad
	V_1(1-\delta)=-\delta\bigl(\mu_-(1)-c\tau f'(1)\bigr),
\end{equation*}	
for $\delta>0$ small (in the following computations, we actually choose $\delta=10^{-8}$) and we set
\begin{equation*}
	v_0(c):=V_0(\alpha),\qquad v_1(c):=V_1(\alpha).
\end{equation*}
As explained in the proof of Theorem \ref{thm:existence}, the function
$c\mapsto v_0(c)-v_1(c)$ is monotone decreasing and it has a single zero,
so that, by following a standard bisection procedure, we find an approximated
value for the unique zero $c_\ast^\tau$ of the difference $v_0-v_1$.
Some of the numerically computed speeds for different values of
$\alpha$ and $\tau$ can be found in Table \ref{tab:numerspeeds}.

\begin{table}\centering
\caption{Numerically computed speeds $c_\ast^\tau=c_\ast^\tau(\alpha)$ for different values of
$\alpha$ and $\tau$.
The column $\tau=0$ gives the values of the speed for the parabolic Allen--Cahn equation.
The presence of parenthesis indicates that condition \eqref{smalltau2} is not satisfied.
\label{tab:numerspeeds}}
{\begin{tabular}{@{}r|c|c|c|c|c|c|c|c|c|c|@{}} 
$\alpha$	& $\tau=$0 & 1 	   & 2      & 3 	   & 4 	& 5 	      & 6 	   & 7 	& 8 	     \\ \hline
	0.6	& 0.14	  & 0.16 & 0.17 & (0.20) & (0.22)	& (0.25) & (0.27) & (0.30)	& (0.31) \\
	0.7	& 0.28	  & 0.31 & 0.33 & 0.35	& (0.37)	& (0.38) & (0.38) & (0.37)	& (0.35) \\
	0.8	& 0.42	  & 0.44 & 0.46 & 0.46	   & 0.45 	& (0.43) & (0.41) & (0.38)	& (0.35) \\
	0.9	& 0.57	  & 0.56 & 0.55 & 0.52    & 0.49	& 0.45   & 0.41	   & 0.38	& 0.35
\end{tabular}}
\end{table}

Applying a variational approach, explicit estimates from below and from above for the value
of the speed $c_\ast^\tau$ have been determined by M\'endez \textit{et al.} \cite{MendFortFarj99}.
As already noted by these authors, the estimate from below
\begin{equation}\label{estbelow}
	c_\ast^\tau(\alpha)\geq
		c_{{\textrm{low}}}^\tau(\alpha)
		:=\frac{\sqrt{2}(\alpha-1/2)}
		{\sqrt{(1-\frac{1}{5}(1-2\alpha+2\alpha^2)\tau)^2+\frac12\tau(1-2\alpha)^2}}
\end{equation}
manifests an excellent agreement with the numerically computed values of the speed.

The value $c_\ast^\tau$ depends on both $\tau$ and $\alpha$.
For fixed $\tau$, as a function of $\alpha$ alone, $c_\ast^\tau$ is oddly symmetric with
respect to $\alpha=1/2$.
Moreover, numerical simulations show that $c_\ast^\tau$ is monotone increasing
and $S$-shaped (see Fig. \ref{fig:hetRelcVal}).

\begin{figure}\centering%\sidecaption
\resizebox{0.8\hsize}{!}{\includegraphics*{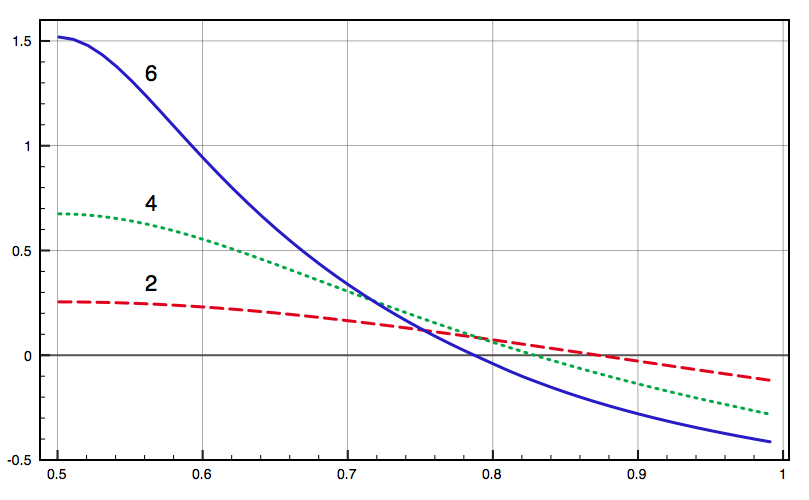}}
\caption{Graph of the map $\alpha\mapsto (c_\ast^\tau-c_\ast^0)/|c_\ast^0|$
for $\alpha\in(0.5,1)$ measuring the relative variation of the speed passing from the parabolic 
Allen--Cahn equation to its hyperbolic counterpart:
$\tau=2$ (dashed), $\tau=4$ (dotted), $\tau=6$ (continuous).
}\label{fig:hetRelRela}
\end{figure}

In some regimes of the parameter $\alpha$ (depending on the size of $\tau$),
the speed for the hyperbolic model is larger than the one for the parabolic Allen--Cahn equation.
Such behavior is different with respect to the {\it damped Allen--Cahn equation},
obtained by solely adding the inertial term $\tau u_{tt}$ to equation \eqref{parAC}
\begin{equation*}
	\tau u_{tt} +  u_t = u_{xx} +f(u),
\end{equation*}
(see \cite{GallJoly09} and the references therein).
Indeed, for such equation, the traveling wave equation can be directly
reduced to the one of the parabolic case by a simple rescaling of the independent variable.
Such procedure furnishes an explicit relation between the speed of propagation of the fronts
with and without the inertial term, that is
\begin{equation*}
	c^\tau_{\text{damped}} = \frac{c^0_\ast}{\sqrt{1+\tau (c^0_\ast)^2}}.
\end{equation*}
From this relation, it is evident that such hyperbolic front is always slower with respect
to the corresponding parabolic one.

Coming back to the case of the Allen--Cahn equation with relaxation,
the discrepancy between $c_\ast^\tau$ and $c_\ast^0$ is described by the function of the 
relative variation $\alpha\mapsto (c_\ast^\tau-c_\ast^0)/|c_\ast^0|$, whose behavior is depicted
in Fig. \ref{fig:hetRelRela} for different values of $\tau$.
For large values of $\tau$, the relative increase of the front can be particularly 
relevant (as an example, about 150\% for $\tau=6$ and values of $\alpha$ close to $0.5$).
The limiting value of the relative variation as $\alpha\to 1/2$ can be approximated by
using the estimate \eqref{estbelow}, that gives
\begin{equation*}
	\lim_{\alpha\to 1/2^+} \frac{c_{\ast}^\tau-c_\ast^0}{|c_\ast^0|}
	\approx
	\lim_{\alpha\to 1/2^+} \frac{c_{{\textrm{low}}}^\tau-c_\ast^0}{|c_\ast^0|}
		=\frac{\tau}{10-\tau}
\end{equation*}
in good agreement with the numerical values expressed in Fig. \ref{fig:hetRelRela}.
It is worth noting the presence of a singularity for $\tau\to 10^-$ that is outside
the range of smallness on the parameter $\tau$ that we are considering.
We do consider the behavior for large relaxation times $\tau$ a very interesting issue
to be analyzed in detail in the future.

Complementary information is provided by the analysis of the value of $c_\ast^\tau$
as a function of $\tau$ for fixed $\alpha$ (see Fig. \ref{fig:hetRel4}).
The numerical evidence shows that the speed function has different monotonicity properties
depending on the chosen value $\alpha$, passing through the monotone increasing case
($\alpha=0.6$), increasing-decreasing ($\alpha=0.7$ and $0.8$), monotone decreasing ($\alpha=0.9$).

It also worthwhile to observe the relation between the propagation speed $c_\ast^\tau$
and the characteristic speed $1/\sqrt{\tau}$.
As stated in Proposition \ref{prop:properties}, $|c_\ast^\tau|$ is always strictly smaller than
$1/\sqrt{\tau}$, as it is observed in Fig. \ref{fig:hetRel4}, where the graph of
the characteristic speed is depicted by a thick (gray, in the online version) band, lying above
the speed curves for all of the values $\alpha$.
For large values of $\tau$, the hyperbolic structure of the principal part of \eqref{relAC} becomes
crucial and the propagation speed of the front tends to one of the
characteristic values $\pm 1/\sqrt{\tau}$
(except for the case of zero speed).
\begin{figure}\centering
\resizebox{0.8\hsize}{!}{\includegraphics*{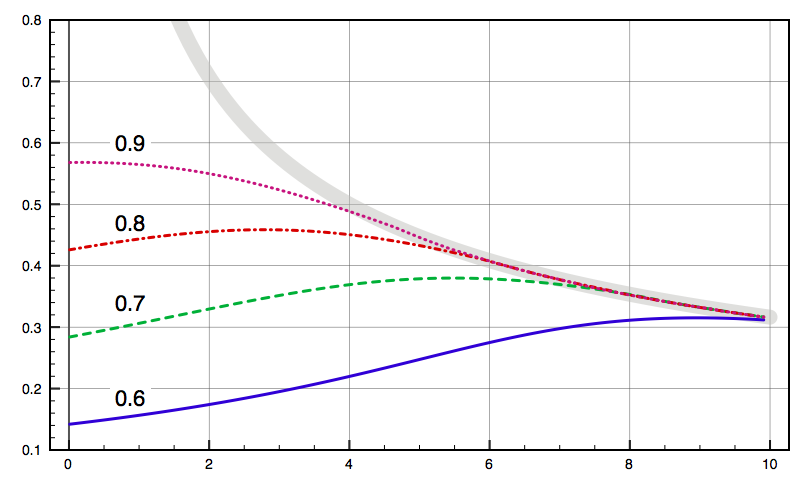}}
\caption{Graph of the map $\tau\mapsto c_\ast^\tau(\alpha)$ for different values of $\alpha$:
continuous line $\alpha=0.6$, dashed line $\alpha=0.7$, dash-dotted line $\alpha=0.8$,
dotted line $\alpha=0.9$.
The thick (gray; online version) band corresponds to the graph of the characteristic
speed $\tau\mapsto 1/\sqrt{\tau}$;
as $\tau$ increases, the speed $c_\ast^\tau(\alpha)$ tends to $1/\sqrt{\tau}$ from below
as a consequence of the emergent r\^ole of the principal part of the system \eqref{relAC}.}
\label{fig:hetRel4}
\end{figure}

\section{Linearization and spectral stability}\label{sect:linspecstab}

This section is devoted to establishing both the equations for the perturbation
of the traveling wave and the corresponding spectral stability problem.
For stability purposes, it is often convenient to recast the system of equations \eqref{relAC}
in a moving coordinate frame.
For fixed $\tau \in [0,\tau_m)$, let $c := c_*(\tau)$ be the unique wave speed of
Theorem \ref{thm:existence}.
Rescaling the spatial variables as $x\mapsto x - c t$, we obtain the nonlinear system
\begin{equation}\label{relACmove}
	\begin{aligned}
 		u_t &= cu_x  + v_x + f(u),\\
		\tau v_t &= u_x  + c \tau v_x -v.
	\end{aligned}
\end{equation}
From this point on and for the rest of the paper the variable $x$ will denote the
moving (galilean) variable $x - ct$.
With a slight abuse of notation, traveling wave solutions to the original system \eqref{relAC}
transform into stationary solutions $(U,V)(x)$ of the new system \eqref{relACmove} and
satisfy the ``profile'' equations
\begin{equation}\label{profileq}
	cU_x + V_x + f(U)= 0,\qquad
	U_x + c\tau V_x - V = 0,
\end{equation}
together with the asymptotic limits \eqref{asymptoticstates}.
Furthermore, the convergence is exponential,
\begin{equation}\label{expodecay}
	\left| \partial_x^j (U-U_\pm,V)(x)\right| \leq C \exp (-\nu |x|),
	\qquad x \in \R
\end{equation}
for some $C,\nu > 0$ and $j = 0,1,2$. It is to be noted that $U_x, V_x \in H^1(\R;\R)$. Moreover, by a 
bootstrapping argument and the second equation in \eqref{profileq}, it is easy to verify that $U_{x}, V_{x} 
\in H^2(\R;\R)$.

\subsection{Perturbation equations and the spectral problem}

Consider a solution to the nonlinear system \eqref{relACmove} of the form
$(u,v)(x,t) + (U,V)(x)$, being $u$ and $v$ perturbation variables.
Upon substitution (and using the profile equations \eqref{profileq}) we arrive
at the following nonlinear system for the perturbation:
\begin{equation}\label{nlsystw}
	\begin{aligned}
		u_t  &=  c u_x  + v_x  + f(u+U) - f(u),\\
		\tau v_t  &= u_x  + c\tau v_x  -v.
	\end{aligned}
\end{equation}
The standard strategy to prove stability of the traveling wave is based on linearizing
system \eqref{nlsystw} around the wave.
The subsequent analysis can be divided into three steps: the spectral analysis of the
resulting linearized operator, the establishment of linear stability estimates for the associated
semigroup, and the nonlinear stability under small perturbations to solutions to \eqref{nlsystw}.
Thus, we first linearize last system around the profile solutions $(U,V)$. The result is
\begin{equation*}
 \begin{aligned}
		u_t  &=  c u_x  + v_x  + f'(U)u,\\
		\tau v_t  &= u_x  + c\tau v_x  -v.
	\end{aligned}
\end{equation*}
Specializing these equations to perturbations of the form
$e^{\lambda t}(\hat u,\hat v)$, where $\lambda \in \C$ is the spectral parameter and $(\hat u,\hat v)(x)$
belongs to an appropriate Banach space $X$, we obtain a naturally associated spectral problem
\begin{equation*}
 \begin{aligned}
		\lambda \hat u  &=  c \hat u_x  + \hat v_x  + f'(U)\hat u,\\
		\lambda \tau \hat v  &= \hat u_x  + c\tau \hat v_x  - \hat v.
	\end{aligned}
\end{equation*}
With a slight abuse of notation we denote again $(\hat u, \hat v)^\top = (u,v)^\top \in X$.
Henceforth, for each $\tau \in (0,\tau_m)$ we are interested in studying the spectral problem
\begin{equation}\label{spectproblem}
	\mathcal{L}^\tau \begin{pmatrix} 
	                  u\\ v
	                 \end{pmatrix} =
\lambda \begin{pmatrix}
	                  u\\ v
	                 \end{pmatrix}, \qquad \begin{pmatrix} 
	                  u\\ v
	                 \end{pmatrix} \in D \subset X, 
\end{equation}
where $\mathcal{L}^\tau$ denotes the first order differential operator determined by
\begin{equation}
\label{defLtau}
	\mathcal{L}^\tau  = - \mathbf{B}^{-1}\left( \mathbf{A} \, \partial_x + \mathbf{C}(x) \right),
\end{equation}
with domain $D$ in $X$, and where
\begin{equation}\label{defABC}
	\mathbf{A}=\begin{pmatrix} -c & -1 \\ -1 & -c\tau \end{pmatrix},\qquad
	\mathbf{B}=\begin{pmatrix} 1 & 0 \\ 0 & \tau \end{pmatrix},\qquad
	\mathbf{C}(x)=\begin{pmatrix} -a(x) & 0 \\ 0 & 1 \end{pmatrix},
\end{equation}
and
\begin{equation*}
 a(x) := f'(U).
\end{equation*}
In this analysis we choose the perturbation space as
$X = L^2(\R;\C^2)$, with dense domain $D = H^1(\R;\C^2)$, which corresponds
to the study of stability under \textit{localized} perturbations.
In this fashion, we obtain a family of closed, densely defined first order operators
in $L^2(\R;\C^2)$, parametrized by $\tau \in (0,\tau_m)$. 

It is to be observed that $\mathcal{L}^\tau$ is not defined for $\tau = 0$, where $\mathbf{B}$ becomes singular.
Formally, in the limit $\tau \to 0^+$ system \eqref{nlsystw} can be written (after substitution) as the
scalar perturbation equation for the parabolic Allen-Cahn (or Nagumo) front:
\begin{equation*}
	u_t  = u_{xx} + c_0 u_x  + f(U^0+u) - f(U^0),
\end{equation*}
where $U^0$ denotes the unique (up to translations) traveling wave solution to the
parabolic Allen-Cahn equation \eqref{parAC}, traveling with speed $c_0 = c_{|\tau = 0}$.
Its linearization leads to the well-studied spectral problem for the operator
$\mathcal{L}_0 : H^2(\R;\C) \rightarrow L^2(\R;\C)$, defined by
\begin{equation*}
	\mathcal{L}_0 u := u_{xx} + c_0 u_x  + \overline{a}(x)u \, = \, \lambda u,
	\quad u \in H^2(\R;\C),
\end{equation*}
where $\overline{a}(x) = f'(U^0)$.
The stability of the parabolic traveling front is a well-known fact: it was first established
by Fife and McLeod \cite{FifeMcLe77} using maximum principles.
The spectral analysis of the operator $\mathcal{L}_0$ can be found in \cite[pp.128--131]{He81},
and in \cite[Chapter 2]{KaPro13}. The following proposition summarizes the spectral stability properties of the parabolic front.

\begin{proposition}[cf. \cite{He81,KaPro13}]\label{prop:tau0}
There exists $\omega_0>0$ such that the spectrum $\sigma(\mathcal{L}_0)$ of the
operator $\mathcal{L}_0$ can be decomposed as
\begin{equation*}
	\sigma(\mathcal{L}_0)=\{0\}\cup\sigma_-^{(0)},
\end{equation*}
where $\lambda = 0$ is an (isolated) eigenvalue with algebraic multiplicity equal to one
and eigenspace generated by $U^0_x \in L^2(\R;\C)$, and $\sigma_-^{(0)}$ is contained in the
half-space $\{\lambda\in\C\,:\,\Real \, \lambda\leq -\omega_0\}$.
\end{proposition}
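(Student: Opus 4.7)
\textbf{Proof proposal for Proposition \ref{prop:tau0}.} The plan is to treat $\mathcal{L}_0$ as a scalar second-order operator on the line with asymptotically constant coefficients and combine Weyl-type results for the essential spectrum with Sturm--Liouville theory for the point spectrum. First, I would analyze the essential spectrum. Since $\overline{a}(x)=f'(U^0(x))$ is smooth and, by the exponential decay of the profile (item (b) of Theorem \ref{thm:existence} in the parabolic limit), tends exponentially to $f'(U_\pm)$ as $x\to\pm\infty$, the operator $\mathcal{L}_0$ is a relatively compact perturbation of the constant-coefficient asymptotic operators $\mathcal{L}_0^\pm u=u_{xx}+c_0 u_x+f'(U_\pm)u$. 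Applying the Fourier transform to $\mathcal{L}_0^\pm$ gives dispersion curves $\lambda_\pm(k)=-k^2+ic_0 k+f'(U_\pm)$ for $k\in\R$, which are parabolas opening leftward with rightmost point at $f'(U_\pm)<0$. Hence $\sigma_{\mathrm{ess}}(\mathcal{L}_0)\subset\{\Real\lambda\le\max\{f'(0),f'(1)\}\}$, producing a strictly negative spectral bound and in particular a gap to the right of the essential spectrum.

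Next, I would locate $\lambda=0$ in the point spectrum. Differentiating the parabolic profile equation $U^0_{xx}+c_0 U^0_x+f(U^0)=0$ with respect to $x$ yields $\mathcal{L}_0 U^0_x=0$. By the monotonicity of $U^0$ and the exponential decay of its derivatives, $U^0_x\in H^2(\R;\C)$ and $U^0_x>0$ everywhere. Because $\lambda=0$ lies strictly to the right of $\sigma_{\mathrm{ess}}$, it is isolated and Fredholm theory guarantees that any spectrum there is discrete and consists of eigenvalues of finite algebraic multiplicity.

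To show that this is the rightmost eigenvalue and that it is geometrically simple, I would pass to the symmetrized form via the weighted change $u=e^{-c_0 x/2}w$, which converts the eigenvalue problem $\mathcal{L}_0 u=\lambda u$ into a Schr\"odinger-type equation $-w_{xx}+Q(x)w=-\lambda w$ with $Q(x)=c_0^2/4-\overline{a}(x)$ bounded and having positive asymptotic limits. Standard Sturm--Liouville theory on the line then implies that the existence of a strictly positive $L^2$-eigenfunction (namely $e^{c_0 x/2}U^0_x$) forces $\lambda=0$ to be the principal eigenvalue: any other eigenvalue is real and strictly smaller, and the corresponding eigenspace is one-dimensional. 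Combined with the essential-spectrum bound, this produces the decomposition $\sigma(\mathcal{L}_0)=\{0\}\cup\sigma_-^{(0)}$ with a uniform gap $\omega_0>0$.

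The last and subtlest point is to upgrade geometric simplicity to algebraic simplicity, that is, to rule out a generalized eigenvector at $\lambda=0$. By the Fredholm alternative, this amounts to showing $U^0_x\notin\mathrm{Range}(\mathcal{L}_0)$, which is equivalent to $\langle U^0_x,\psi\rangle\neq 0$ for any $\psi\in\ker\mathcal{L}_0^*$. A direct computation gives $\mathcal{L}_0^*\psi=\psi_{xx}-c_0\psi_x+\overline{a}(x)\psi$, whose kernel is spanned by $\psi(x)=e^{c_0 x}U^0_x$. The pairing is
\begin{equation*}
    \langle U^0_x,\psi\rangle=\int_\R e^{c_0 x}(U^0_x(x))^2\,dx>0,
\end{equation*}
since the integrand is non-negative and strictly positive on an open set; convergence is ensured by the exponential decay of $U^0_x$ at $\pm\infty$ together with $|c_0|<1/\sqrt{\tau}$ in the parabolic limit (and, anyway, by the direct bound $|U^0_x(x)|\le Ce^{-\nu|x|}$ with $\nu$ large enough relative to $|c_0|/2$). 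This positivity excludes any Jordan chain at $\lambda=0$, completing the proof. The main obstacle I anticipate is precisely this last step: one must verify carefully that the adjoint kernel is one-dimensional and spanned by the weighted eigenfunction, and that the non-degeneracy pairing does not vanish, but both facts are standard consequences of the Sturm--Liouville structure invoked above.
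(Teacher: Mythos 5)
Your proposal is correct and is essentially the standard argument that the paper delegates to the cited references \cite{He81,KaPro13}: essential spectrum via the constant-coefficient asymptotic operators, $U^0_x$ as a positive eigenfunction, the symmetrizing weight $e^{-c_0x/2}$ plus Sturm--Liouville theory for simplicity and the spectral gap, and the non-vanishing pairing with the adjoint kernel element $e^{c_0x}U^0_x$ to exclude a Jordan block. Note that your weight and adjoint eigenfunction are exactly the $\tau=0$ specializations of the quantities $z$ and $\eta=z^{-2}\phi$ that the paper uses in Lemmas \ref{lemgm1} and \ref{lemam1} for the relaxation case, so your argument is fully consistent with the paper's own treatment of $\tau>0$.
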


Since the operators $\mathcal{L}^\tau$ have been defined on the appropriate spaces,
the standard definitions for the resolvent $\rho(\mathcal{L}^\tau)$ and spectrum
$\sigma(\mathcal{L}^\tau)$ follow (see \cite{EE87,Kat80} and Section \ref{secfirstorder} below).
Our goal is to establish the spectral stability for the family of operators \eqref{defLtau},
for parameter values $\tau \in (0,\tau_m)$,  by proving a result analogous to Proposition \ref{prop:tau0}:

\begin{theorem}[Spectral stability]\label{thm:spectrum}
For each $\tau \in (0,\tau_m)$, there exists $\omega_0(\tau)>0$ such that the spectrum
$\sigma(\mathcal{L}^\tau)$ of the operator $\mathcal{L}^\tau$ can be decomposed as
\begin{equation*}
	\sigma(\mathcal{L}^\tau)=\{0\}\cup\sigma_-^{(\tau)},
\end{equation*}
where $\lambda = 0$ is an (isolated) eigenvalue with algebraic multiplicity equal to one
and eigenspace generated by $(U_x,V_x) \in D(\mathcal{L}^\tau)$, and $\sigma_-^{(0)}$ is contained in the
half-space $\{\lambda\in\C\,:\,\Real\lambda\leq -\omega_0(\tau)\}$.
\end{theorem}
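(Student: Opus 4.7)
The plan is to follow the standard template for spectral stability of traveling waves in hyperbolic systems, splitting the spectrum into an essential part and a point part, and then transporting the known $\tau = 0$ information (Proposition \ref{prop:tau0}) along the one-parameter family via continuation.

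\textbf{Step 1: Essential spectrum.} I would first locate $\sigma_{\mathrm{ess}}(\mathcal{L}^\tau)$. Because $(U,V)$ converges exponentially to $(U_\pm,0)$ by \eqref{expodecay}, the operator $\mathcal{L}^\tau$ is a relatively compact perturbation of the piecewise-constant coefficient operators $\mathcal{L}^\tau_\pm$ obtained by freezing $a(x)$ at $a(\pm\infty) = f'(U_\pm) < 0$. A Weyl-type/Henry argument then identifies the essential spectrum with the (closure of the) Fredholm borders determined by the dispersion relations $\det(\lambda \mathbf{B} + i k \mathbf{A} + \mathbf{C}_\pm) = 0$, $k\in\R$. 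An explicit computation (using $f'(U_\pm) < 0$ and $c^2\tau < 1$) shows that these curves lie in a half-plane $\{\Real\lambda \le -\omega_{\mathrm{ess}}(\tau)\}$ for some $\omega_{\mathrm{ess}}(\tau) > 0$. This rules out essential spectrum in the right half-plane for every $\tau \in (0,\tau_m)$.

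\textbf{Step 2: Zero is an eigenvalue, and it is simple.} Differentiating the profile equations \eqref{profileq} in $x$ shows immediately that $(U_x,V_x)^\top$ lies in $D(\mathcal{L}^\tau) = H^2(\R;\C^2)$ (by the regularity remark following \eqref{expodecay}) and satisfies $\mathcal{L}^\tau(U_x,V_x)^\top = 0$. To check that $\lambda = 0$ is an algebraically simple eigenvalue, I would exploit that (i) $(U_x,V_x)^\top$ has no zeros by monotonicity of $U$ from Theorem \ref{thm:existence}(a) and the hyperbolic structure giving $V_x$ a definite sign, and (ii) the spectral problem is equivalent to a $2\times 2$ first-order system whose asymptotic matrices at $\pm\infty$ are hyperbolic (the saddle structure from the phase-plane analysis in the proof of Theorem \ref{thm:existence}). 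Constructing the Evans function $E(\lambda)$ associated with this first-order ODE system, the existence of the bounded eigenfunction gives $E(0) = 0$, and a direct transversality computation (Melnikov-type integral, analogous to the one giving $c_*$ via $\int(1-\tau f'(U))U_\xi^2\,d\xi \neq 0$ in the proof of Proposition \ref{prop:properties}) yields $E'(0) \neq 0$, hence simplicity.

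\textbf{Step 3: Perturbation at $\tau = 0^+$ and continuation.} This is where I expect the main difficulty. The limit $\tau \to 0^+$ is singular (the matrix $\mathbf{B}$ degenerates, the operator drops one order) so standard analytic perturbation theory à la Kato does not apply directly. I would instead pass through the Evans function, which can be constructed uniformly in $\tau$ on compact $\lambda$-sets up to and including $\tau = 0$ (after an appropriate reduction, for instance by using the one-field form \eqref{relAConef} and scaling high-frequency variables), and whose zeros in $\Real\lambda \ge 0$ reduce at $\tau = 0$ to the zeros of the parabolic Evans function, i.e.\ to $\{0\}$ by Proposition \ref{prop:tau0}. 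This gives stability for $\tau$ in a right-neighborhood of $0$. The final step is continuation on the whole interval $(0,\tau_m)$: the point spectrum in $\Real\lambda \ge 0$ depends continuously on $\tau$ (zeros of the analytic Evans function), Step 1 guarantees a uniform essential-spectrum gap on compact subintervals of $(0,\tau_m)$, and a high-frequency resolvent estimate — derived from an energy inequality for $\mathcal{L}^\tau - \lambda$ using the positivity of the damping $1 - \tau f'(u) > 0$ from \eqref{smalltau2} — prevents eigenvalues from escaping to $|\lambda| = \infty$. The zero eigenvalue remains fixed with $E'(0)\neq 0$ by Step 2 for every $\tau$, so no eigenvalue can cross into $\Real\lambda > 0$. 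This closes the argument and yields the spectral decomposition with a $\tau$-dependent gap $\omega_0(\tau) > 0$.

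The hard part is making Step 3 rigorous, specifically the uniform-in-$\tau$ construction of the Evans function near the singular limit $\tau=0$ and the high-frequency resolvent bound that precludes eigenvalues coming in from infinity; once these are in hand, the continuation is standard.
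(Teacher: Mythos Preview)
Your overall architecture matches the paper's closely: essential spectrum via dispersion relations, simplicity of $\lambda=0$ via a Melnikov computation on the Evans function, uniform Evans-function convergence at the singular limit $\tau\to 0^+$ (the paper does this by passing through the one-field equation via Kac's trick, exactly as you suggest), and a high-frequency resolvent bound to confine point spectrum to a ball. So the strategy is right.

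There is, however, a genuine gap in your continuation argument in Step 3. You write that because $\lambda=0$ stays simple for every $\tau$, ``no eigenvalue can cross into $\Real\lambda>0$.'' That only rules out crossing the imaginary axis \emph{through the origin}. Nothing you have said excludes a complex-conjugate pair of eigenvalues crossing at some $\lambda=i\beta$ with $\beta\neq 0$ as $\tau$ increases. The paper closes this gap with a separate energy estimate (Lemma~\ref{lemma:imagevalue}): after Kac's trick and the same weight $z(x)=\exp(-\int^x a_1/2a_0)$ used in the simplicity proof, one multiplies the transformed second-order equation by $\bar w$, integrates, and takes the imaginary part; the assumption $1-\tau a(x)>0$ (i.e.\ \eqref{smalltau2}) then forces $w\equiv 0$ whenever $\lambda\in i\R\setminus\{0\}$. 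Without this lemma the continuation does not close, since nothing a priori prevents a Hopf-type crossing.

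A minor point: in Step 2 you claim $V_x$ has a definite sign ``by the hyperbolic structure.'' This is false --- $V$ is positive and tends to $0$ at both ends, so $V_x$ changes sign. Geometric simplicity does not come from a sign condition on the eigenfunction here; it follows either from the one-dimensionality of the stable/unstable subspaces at $\pm\infty$ (consistent splitting), or, as the paper does, from a Wronskian argument on the self-adjoint form of the scalar second-order equation obtained via Kac's trick.
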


The approach to prove Theorem \ref{thm:spectrum} is based on rewriting the spectral
problem as a first order system with the eigenvalue as a parameter.
Then, applying a general theorem for convergence of approximate flows (cf. \cite{PZ1}),
we show that the spectral stability for $\tau = 0$ persists for $\tau \sim 0^+$.
By a continuation argument, we extend the result to the whole parameter domain $\tau \in (0,\tau_m)$.

\subsection{Reformulation of the spectral problem}\label{secfirstorder}

Set $\tau \in (0,\tau_m)$.
Component-wise the spectral problem \eqref{spectproblem} can be written as
\begin{equation}\label{evalue}
	\begin{aligned}
		c{u_x} + {v_x} +(a(x)-\lambda) u &= 0,\\
		{u_x} + c \tau {v_x} -(1+\tau\lambda)v &= 0.
	\end{aligned}
\end{equation}
Just like for the original nonlinear Allen-Cahn model with relaxation \eqref{relAC}, the $v$ variable can be
eliminated to obtain a second order equation for $u$ (a spectral version of Kac's trick):
multiply the first equation by $c\tau$, substract it from the second and differentiate with respect to $x$.
The result is the following second order spectral equation,
\begin{equation}\label{2ndorderu}
	(1-c^2\tau){u_{xx}} + c \big( 1+\tau(2\lambda - a) \big) {u_x}
		+ \big( (1+\tau \lambda)(a(x)-\lambda) - c\tau a'(x) \big) u = 0.
\end{equation}
Following Alexander, Gardner and Jones \cite{AGJ90}, we recast the scalar spectral
equation \eqref{2ndorderu} as a first order system of the form
\begin{equation}\label{firstorders}
	\mathbf{w}_x = \At (x,\lambda)\mathbf{w},
\end{equation}
where $\mathbf{w}=(u,u_x)^\top$ and, for $a=a(x)$ and $a'=a'(x)$,
\begin{equation}\label{coeffA}
	\At (x,\lambda) := \frac{1}{1 - c^2 \tau}
	\begin{pmatrix} 0 & & 1-c^2\tau \\
				c\tau a' + (1+\tau\lambda)(\lambda - a) &  & \;c(\tau a -1-2\tau\lambda) 
				\end{pmatrix}.
\end{equation}
Observe that the coefficient matrices \eqref{coeffA} can be written as
\begin{equation*}
	\At (x,\lambda) = \At_0(x) + \lambda \At_1(x) + \lambda^2 \At_2(x),
\end{equation*}
where
\begin{equation*}
\begin{aligned}
	\At_0(x) &:= \At(x,0) = \frac{1}{1 - c^2 \tau}\begin{pmatrix} 0 & & 1-c^2\tau \\ c\tau a'(x) - a(x) &  & \; c(\tau a(x) - 1) 
                                       \end{pmatrix},\\
 	\At_1(x) &:=  \frac{1}{1 - c^2 \tau}\begin{pmatrix} 0 & \;0 \\ 1 - \tau a(x) &  \;-2c\tau  
                                       \end{pmatrix}, \\
	\At_2(x) &:= \frac{\tau}{1 - c^2 \tau} \begin{pmatrix}
                                      0 & \;0 \\ 1 & \; 0
                                     \end{pmatrix}.
\end{aligned}
\end{equation*}
Since $c = c(\tau)$ is a continuous function of $\tau \in [0,\tau_m)$,
we conclude that $\At (\cdot,\lambda)$ is a function from $(\tau, \lambda) \in [0,\tau_m) \times \C$
to $L^\infty(\R;\R^{2\times 2})$, continuous in $\tau$ and analytic
(second order polynomial) in $\lambda$.

It is a well-known fact (see \cite{San02,AGJ90} and the references therein) that an alternate but
equivalent definition of the spectra and the resolvent sets associated to the spectral problem
\eqref{spectproblem} can be expressed in terms of the first order systems \eqref{firstorders}.
Consider the following family of linear, closed, densely defined operators
\begin{equation}
\label{defofTau}
 \begin{aligned}
  \mathcal{T}^\tau (\lambda) &: \overline{D} =  H^1(\R;\C^2) \to L^2(\R;\C^2),\\
\mathcal{T}^\tau \mathbf{w} &:= \mathbf{w}_x - \A^{\tau}(x,\lambda)\mathbf{w}, \qquad \mathbf{w} \in H^1(\R;\C^2),
 \end{aligned}
\end{equation}
indexed by $\tau \in (0,\tau_m)$, parametrized by $\lambda \in \C$ and with domain
$\overline{D} = H^1(\R;\C^2)$, which is independent of $\lambda$ and $\tau$.
With a slight abuse of notation we call $\mathbf{w} \in H^1(\R;\C^2)$ an \textit{eigenfunction} associated
to the eigenvalue $\lambda \in \C$ provided $\mathbf{w}$ is a bounded solution to the equation
\begin{equation*}
 \mathcal{T}^\tau (\lambda) \mathbf{w} = 0.
\end{equation*}
More precisely, for each $\tau \in [0,\tau_m)$ we define the resolvent $\rho$,
the point spectrum $\ptsp$ and the essential spectrum $\ess$ of problem \eqref{evalue} as
\begin{equation*}
	\begin{aligned}
		\rho &= \{\lambda \in \C \, : \, \mathcal{T}^\tau (\lambda) \,\text{ is injective and onto, 
and }
			\mathcal{T}^\tau (\lambda)^{-1} \, \text{is bounded} \, \},\\
		\ptsp &= \{ \lambda \in \C \,: \; \mathcal{T}^\tau (\lambda) \,\text{ is Fredholm with index zero
			and non-trivial kernel} \},\\
		\ess &= \{ \lambda \in \C \,: \; \mathcal{T}^\tau (\lambda) \,\text{ is either not Fredholm or
			with non-zero index} \},
\end{aligned}
\end{equation*}
respectively.
The whole spectrum is $\sigma = \ess \cup \ptsp$. Since each operator $\mathcal{T}^\tau (\lambda)$
is closed, then $\rho = \C \backslash \sigma$ (cf. \cite{Kat80}).
When $\lambda \in \ptsp$ we call it an eigenvalue, and any element in $\ker \mathcal{T}^\tau(\lambda)$ is an eigenvector.

We call the reader's attention to the fact that, unlike equation \eqref{spectproblem},
the spectral problem formulated in terms of the first order systems \eqref{firstorders}
is well defined for $\tau = 0$, with
\begin{equation*}
 	\A^0(x,\lambda) = \begin{pmatrix} 0 & & 1 \\ \lambda - a(x) & & -c_0 \end{pmatrix},
\end{equation*}
and where $c_0 = c(\tau)\bigr|_{\tau = 0}$ is the velocity of the parabolic front.

\begin{remark}
\label{remaux}
Suppose $\tau \in (0,\tau_m)$. If we substitute $\lambda = -1/\tau$ into \eqref{evalue} we arrive at the equation 
\begin{equation*}
 	(1-c^2 \tau)u_x - c(1+\tau a(x))u = 0.
\end{equation*}
Taking the real part of the $L^2$ product of last equation with $u$ we obtain 
\begin{equation*}
 	0 = \int_\R (\underbrace{1+\tau a(x))}_{> 0})|u|^2 \, dx \geq 0,
\end{equation*}
inasmuch as $\Real \langle u,u_x\rangle_{L^2} = 0$, and $\tau < \tau_m =1/\sup |f'|$.
This implies that $u = 0$, and hence $v = 0$, showing that $\lambda = -1/\tau$ does not belong to the point spectrum.
\end{remark}

\subsubsection{On algebraic and geometric multiplicities}

In the stability of traveling waves literature, it is customary to analyze
the spectrum of a differential operator $\mathcal{L}$ of second (or higher) order, for which there is a
natural invertible transformation from the kernel of $\mathcal{L}-\lambda$ to the kernel of a first order
operator of the form \eqref{defofTau}.
In such cases, the matrices \eqref{coeffA} are linear in $\lambda$ and there is a natural
correspondence between the Jordan block structures of $\mathcal{L}-\lambda$ and those of the
corresponding operators $\mathcal{T} (\lambda)$ (see \cite{San02}).
Since we arrived at systems \eqref{firstorders} through a different transformation (Kac's trick),
we must show that such property remains in our case.
For each $\tau \in (0,\tau_m)$ and $\lambda \in \ptsp$, we call the mapping,
\begin{equation}\label{specKac}
	\begin{aligned}
 	\mathcal{K} &: \ker (\mathcal{L}^\tau - \lambda) 
\, \to \ker \mathcal{T}^\tau (\lambda), \\ 
	\mathcal{K} \begin{pmatrix} 
	             u \\ v
	            \end{pmatrix}
&:= \begin{pmatrix} 
	             u \\ u_x
	            \end{pmatrix} = \mathbf{w}, \qquad \; \begin{pmatrix} 
	             u \\ v
	            \end{pmatrix} \in \ker (\mathcal{L}^\tau - \lambda),
	\end{aligned}
\end{equation}
as the \textit{spectral Kac's transformation}. It is a one-to-one and onto map.
Indeed, if $\mathbf{w}_1 = \mathbf{w}_2 \in \ker \mathcal{T}^\tau (\lambda)$ then $u_1 = u_2$ and
$\partial_x u_1 = \partial_x u_2$ a.e., and the first equation in \eqref{evalue} yields
$\partial_x v_1 = \partial_x v_2$, whereas the second equation implies $v_1 = v_2$ a.e.
Thus, $(u_1, v_1)^\top = (u_2, v_2)^\top \in \ker (\mathcal{L}^\tau - \lambda)$.
It is onto because for each $\mathbf{w} = (u, u_x)^\top \in \ker \mathcal{T}^\tau (\lambda)$ clearly
there exists 
\begin{equation*}
\begin{pmatrix}
 u \\ v
\end{pmatrix}
=	\begin{pmatrix}
 u \\ (1+\tau \lambda)^{-1} \big( (1-c^2\tau)u_x + c\tau (\lambda - a(x)) u\big) 
\end{pmatrix} \in \ker (\mathcal{L}^\tau - \lambda),
\end{equation*}
such that $\mathbf{w} = \mathcal{K}(u,v)^\top$.
(Provided, of course, that $\lambda \neq -1/\tau$. But $-1/\tau \notin \ptsp$ by Remark \ref{remaux}.)
Since $v$ satisfies the first equation of \eqref{evalue} we conclude that $v \in H^2(\R;\C)$ as well.

\begin{proposition}\label{prop:propequiv}
Spectral Kac's transformation \eqref{specKac} induces a one-to-one correspondence between Jordan chains.
\end{proposition}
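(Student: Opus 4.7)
The plan is to verify, by a termwise application of Kac's trick, that the bijection $\mathcal{K}$ from the paragraph preceding the proposition extends to the level of Jordan chains. Recall that for $\mathcal{L}^\tau$, a Jordan chain of length $n+1$ at $\lambda\in\ptsp$ is a sequence $\{(u_k,v_k)^\top\}_{k=0}^{n}\subset D(\mathcal{L}^\tau)$ with $(u_0,v_0)\neq(0,0)$, $(\mathcal{L}^\tau-\lambda)(u_0,v_0)^\top = 0$, and $(\mathcal{L}^\tau-\lambda)(u_k,v_k)^\top = (u_{k-1},v_{k-1})^\top$ for $1\leq k\leq n$. Since the first-order operator pencil $\mathcal{T}^\tau(\lambda) = \partial_x - (\At_0(x) + \lambda\,\At_1(x) + \lambda^2\,\At_2(x))$ is quadratic in $\lambda$, a Jordan chain of length $n+1$ at $\lambda$ in the Gohberg--Sigal sense is a sequence $\{\mathbf{w}_k\}_{k=0}^{n}\subset H^1(\R;\C^2)$ with $\mathbf{w}_0\neq 0$, $\mathcal{T}^\tau(\lambda)\mathbf{w}_0 = 0$, and
\begin{equation*}
\mathcal{T}^\tau(\lambda)\mathbf{w}_k = \bigl(\At_1 + 2\lambda\,\At_2\bigr)\mathbf{w}_{k-1} + \At_2\,\mathbf{w}_{k-2}, \qquad 1\leq k\leq n,
\end{equation*}
with the convention $\mathbf{w}_{-1}=\mathbf{w}_{-2}=0$.

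I would define the forward map chain-wise by $(u_k,v_k)^\top\mapsto\mathbf{w}_k := (u_k,u_{k,x})^\top$ and check that it sends Jordan chains to Jordan chains by applying Kac's trick componentwise to $(\mathcal{L}^\tau-\lambda)(u_k,v_k)^\top = (u_{k-1},v_{k-1})^\top$. Multiplying the first component by $c\tau$, subtracting the second, and solving for $(1+\tau\lambda)v_k$ (permissible since $-1/\tau\notin\ptsp$ by Remark \ref{remaux}), then differentiating and substituting back into the first component equation, yields the scalar identity
\begin{equation*}
\mathscr{M}(\lambda)\,u_k = (1+\tau\lambda)\,u_{k-1} - c\tau\,u_{k-1,x} + \tau\,v_{k-1,x},
\end{equation*}
where $\mathscr{M}(\lambda)$ denotes the scalar second-order operator underlying \eqref{2ndorderu}. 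I would then eliminate the remaining $v_{k-1,x}$ by means of the first equation at index $k-1$, namely $v_{k-1,x} = u_{k-2} - cu_{k-1,x} - (a(x)-\lambda)u_{k-1}$ (with $u_{-1}:=0$), to obtain
\begin{equation*}
\mathscr{M}(\lambda)\,u_k = (1+2\tau\lambda - \tau a(x))\,u_{k-1} - 2c\tau\,u_{k-1,x} + \tau\,u_{k-2}.
\end{equation*}
By direct inspection of the explicit forms of $\At_1$ and $\At_2$, the right-hand side is precisely the second component of $(\At_1+2\lambda\,\At_2)\mathbf{w}_{k-1} + \At_2\mathbf{w}_{k-2}$ evaluated on $\mathbf{w}_k = (u_k,u_{k,x})^\top$, while the first-component relation $\partial_x u_k - u_{k,x} = 0$ is automatic.

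For the converse, given a Jordan chain $\{\mathbf{w}_k = (p_k,q_k)^\top\}$ for $\mathcal{T}^\tau(\lambda)$, the first-component equations already force $q_k = p_{k,x}$, and I would define $v_k$ recursively by
\begin{equation*}
(1+\tau\lambda)\,v_k := (1-c^2\tau)\,p_{k,x} - c\tau\,(a(x)-\lambda)\,p_k + c\tau\,p_{k-1} - \tau\,v_{k-1},
\end{equation*}
with $v_{-1} := 0$, and verify by reversing the Kac computation step-by-step that $\{(p_k,v_k)^\top\}$ satisfies the defining identities of a Jordan chain for $\mathcal{L}^\tau$ of the same length. The regularity $v_k\in H^2(\R;\C)$ follows by induction from $p_k\in H^2(\R;\C)$ (a consequence of the second-order scalar equation together with the already-established regularity of the lower-order $p_j$) and the first equation of \eqref{evalue}, exactly as in the paragraph preceding the proposition. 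Because the two constructions are linear and, by uniqueness of the recursively defined $v_k$, mutually inverse, they induce a canonical length-preserving bijection between Jordan chains. The main obstacle is the bookkeeping in the forward step: the $v$-derivative generated by Kac's trick on the right-hand side $(u_{k-1},v_{k-1})^\top$ must be reduced, via the first equation at index $k-1$, to a combination of $u_{k-1}$, $u_{k-1,x}$, and $u_{k-2}$ alone, and one has to verify coefficient by coefficient that the resulting expression matches $(\At_1+2\lambda\,\At_2)\mathbf{w}_{k-1} + \At_2\mathbf{w}_{k-2}$; beyond this explicit calculation the remainder is a purely formal manipulation entirely grounded on the fact that Kac's trick at $k=0$ is already known to be a bijection.
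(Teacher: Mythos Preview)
Your approach is essentially the paper's: apply Kac's trick term by term along the chain, eliminating the flux variable, and match the resulting scalar recursion against the first-order system. The mechanics, including the recursive definition of $v_k$ in the converse direction and the regularity bootstrap, mirror what the paper does (and what the paragraph preceding the proposition already sets up for $k=0$).

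There is, however, a genuine and useful refinement in your version. Because the pencil $\At(x,\lambda)=\At_0+\lambda\At_1+\lambda^2\At_2$ is quadratic in $\lambda$, you correctly adopt the Gohberg--Sigal (Keldysh) notion of Jordan chain,
\[
\mathcal{T}^\tau(\lambda)\mathbf{w}_k=(\At_1+2\lambda\At_2)\mathbf{w}_{k-1}+\At_2\,\mathbf{w}_{k-2},
\]
and your computation shows that the extra term $\At_2\mathbf{w}_{k-2}$ is exactly the $\tau\,u_{k-2}$ produced when one eliminates $v_{k-1,x}$ via the first equation at level $k-1$. The paper's proof, by contrast, carries out the calculation only at level $k=1$ (where $(u_{k-1},v_{k-1})=(\varphi,\psi)\in\ker(\mathcal{L}^\tau-\lambda)$ and hence $u_{k-2}=0$) and then states the generalization as $\mathcal{T}^\tau(\lambda)\mathbf{w}_j=(\partial_\lambda\At)\mathbf{w}_{j-1}$, which also becomes the paper's Definition~\ref{def:spectT} of algebraic multiplicity. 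For $j\geq 2$ that identity is missing precisely the $\At_2\mathbf{w}_{j-2}$ contribution you keep track of. This does not affect any downstream result in the paper---only the $j=1$ step is ever used, in Lemma~\ref{lemam1}, and at $j=1$ both formulations coincide---but your formulation is the one that yields an honest length-preserving bijection for chains of arbitrary length.
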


\begin{proof}
Suppose $(\varphi, \psi)^\top \in \ker (\mathcal{L}^\tau - \lambda)$.
This is equivalent to the system
\begin{equation}\label{e0}
	c{\varphi_x} + {\psi_x} +(a(x)-\lambda) \varphi = 0,\qquad
	{\varphi_x} + c \tau {\psi_x} -(1+\tau\lambda)\psi = 0.
\end{equation}
If we take the next element in a Jordan chain, say $(u,v)^\top \in H^2(\R;\C^2)$, solution to
\begin{equation*}
 (\mathcal{L}^\tau - \lambda)\begin{pmatrix}
                              u \\ v
                             \end{pmatrix} = \begin{pmatrix}
                              \varphi \\ \psi
                             \end{pmatrix},
\end{equation*}
then we obtain the system
\begin{equation}\label{e1}
	cu_x + v_x +(a(x)-\lambda) u = \varphi,\qquad
	u_x + c \tau v_x -(1+\tau\lambda)v = \tau \psi.
\end{equation}
Multiply the first equation by $c\tau$, substract from the second one, differentiate it,
and substitute $v_x$ from the first equation and $\psi_x$ from the second in \eqref{e0}.
The result is the following scalar equation
\begin{equation}\label{eq2}
	\begin{aligned}
	&(1-c^2\tau){u_{xx}} + c\big( 1+\tau(2\lambda - a(x))\big){u_x}\\
	&\;\;\; +\bigl\{(1+\tau \lambda)(a(x)-\lambda) - c\tau a'(x)\bigr\} u
		= (1+ 2\tau \lambda - \tau a(x)) \varphi - 2 c\tau \varphi_x.
	\end{aligned}
\end{equation}
Written as a system for $\mathbf{w}_1 := (u, u_x)^\top$, equation \eqref{eq2} is equivalent to
\begin{equation*}
 \partial_ x \mathbf{w}_1 - \At(x,\lambda) \mathbf{w}_1 = \big(\At_1(x) + 2 \lambda \At_2(x)\big) \begin{pmatrix}
                                                                         \varphi \\ \varphi_x
                                                                        \end{pmatrix}.
\end{equation*}
Generalizing this procedure, we observe that solutions to 
\begin{equation*}
 (\mathcal{L}^\tau - \lambda)\begin{pmatrix}
                              u_j \\ v_j
                             \end{pmatrix}
= \begin{pmatrix}
                u_{j-1} \\ v_{j-1}              
                             \end{pmatrix},
\end{equation*}
are in one-to-one correspondence with solutions to the equation
$\mathcal{T}^\tau (\lambda) \mathbf{w}_j = (\partial_\lambda \At(x,\lambda))\mathbf{w}_{j-1}$,
where $w_j$ and $(u_j, v_j)^\top$ are related to each other through Kac's transformation.
Hence, a Jordan chain for $\mathcal{L}^\tau-\lambda$ induces a Jordan chain for
$\mathcal{T}^\tau (\lambda)$ with same block structure and length. 
%\flushright\qed
\end{proof}

Consequently, we have the following definition. 

\begin{definition}\label{def:spectT}
Assume $\lambda \in \ptsp$. Its geometric multiplicity (\textit{g.m.}) is the maximal number of
linearly independent elements in $\ker \mathcal{T}^\tau (\lambda)$.
Suppose $\lambda \in \ptsp$ has $g.m. = 1$, so that $\ker \mathcal{T}^\tau (\lambda) =$ span $\{\mathbf{w}_0\}$.
We say $\lambda$ has algebraic multiplicity (\textit{a.m.}) equal to $m$ if we can solve
$\mathcal{T}^\tau (\lambda) \mathbf{w}_j = (\partial_\lambda \At(x,\lambda)) \mathbf{w}_{j-1}$,
for each $j = 1, \ldots, m-1$, with $\mathbf{w}_j \in H^1$, but there is no bounded $H^1$ solution
$\mathbf{w}$ to $\mathcal{T}^\tau (\lambda) \mathbf{w} = (\partial_\lambda \At(x,\lambda)) \mathbf{w}_{m-1}$.
For an arbitrary eigenvalue $\lambda \in \ptsp$ with $g.m.= l$, the algebraic multiplicity
is defined as the sum of the multiplicities $\sum_k^l m_k$ of a maximal set of linearly
independent elements in $\ker \mathcal{T}^\tau (\lambda) = $ span $\{\mathbf{w}_1, \ldots, \mathbf{w}_l\}$.
\end{definition}

Thanks to Proposition \ref{prop:propequiv} and Definition \ref{def:spectT} we readily obtain the following

\begin{corollary}
For each $\tau \in [0,\tau_m)$, the sets $\ptsp$ and $\ptsp(\mathcal{L}^\tau)$ (the latter defined
as the set of complex $\lambda$ such that $\mathcal{L}^\tau-\lambda$ is Fredholm with index
zero and has a non-trivial kernel) coincide, with same algebraic and geometric multiplicities. 
\end{corollary}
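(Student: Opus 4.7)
The plan is to reduce the corollary to the spectral Kac transformation $\mathcal{K}$ of \eqref{specKac} together with Proposition \ref{prop:propequiv}. The argument splits naturally into three ingredients that must be verified: (i) matching of the kernels, (ii) matching of the Fredholm-index-zero property, and (iii) matching of the Jordan-chain structure.

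For (i), I would observe that Remark \ref{remaux} excludes $\lambda = -1/\tau$ from $\ptsp$, so on the relevant range of $\lambda$ the transformation $\mathcal{K}$ is a well-defined linear isomorphism between $\ker(\mathcal{L}^\tau - \lambda)$ and $\ker \mathcal{T}^\tau(\lambda)$, as already verified in the text preceding Proposition \ref{prop:propequiv}. In particular, one kernel is non-trivial iff the other is, and their dimensions—that is, the geometric multiplicities—coincide.

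For (ii), I would use the fact that the coefficient $a(x) = f'(U(x))$ converges exponentially fast to $f'(U_\pm)$ as $x \to \pm\infty$, by the decay estimate \eqref{expodecay}. Consequently both $\mathcal{L}^\tau - \lambda$ and $\mathcal{T}^\tau(\lambda)$ are exponentially localized perturbations of constant-coefficient asymptotic operators sharing the same spectral data (up to Kac's invertible algebraic substitution, valid for $\lambda \ne -1/\tau$). By a standard Palmer-type argument for first-order systems, being Fredholm is equivalent to hyperbolicity of the asymptotic matrices, and the index is determined by the dimensions of the corresponding stable subspaces; so $\mathcal{L}^\tau - \lambda$ and $\mathcal{T}^\tau(\lambda)$ have equal Fredholm indices. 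Alternatively, one may argue directly by observing that Kac's identity transforms the inhomogeneous problem $(\mathcal{L}^\tau - \lambda)\mathbf{u} = \mathbf{g}$ into an equivalent inhomogeneous first-order problem for $\mathcal{T}^\tau(\lambda)$, so that the ranges and cokernels of the two operators are in explicit bijective correspondence.

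For (iii), I would invoke Proposition \ref{prop:propequiv} directly: it provides a bijection between Jordan chains of $\mathcal{L}^\tau - \lambda$ and those of $\mathcal{T}^\tau(\lambda)$ preserving both length and block structure. Combining this with Definition \ref{def:spectT} on the $\mathcal{T}^\tau$-side and the standard generalized-eigenspace notion on the $\mathcal{L}^\tau$-side yields immediately that the algebraic multiplicities coincide. The main obstacle is really the Fredholm step in (ii), since kernels and Jordan chains have already been handled upstream; however, this step is routine either via exponential dichotomy theory or via the direct solvability correspondence induced by Kac's transformation, once $\lambda = -1/\tau$ is harmlessly excluded thanks to Remark \ref{remaux}.
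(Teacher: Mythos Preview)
Your proposal is correct and follows the same line as the paper, only more explicitly. The paper treats the corollary as an immediate consequence of Proposition~\ref{prop:propequiv} and Definition~\ref{def:spectT} (literally a one-line ``we readily obtain''), whereas you unpack the argument into the three natural pieces (i)--(iii). Your identification of step~(ii), the matching of the Fredholm-index-zero property, is a genuine elaboration: Proposition~\ref{prop:propequiv} only handles kernels and Jordan chains, so the Fredholm equivalence does require an additional observation, which the paper leaves implicit. Your two suggested routes for (ii)---Palmer/exponential-dichotomy theory on the asymptotic systems, or the direct bijection between inhomogeneous problems induced by the Kac substitution---are both standard and either one closes the gap.
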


\subsection{The (translation invariance) eigenvalue $\lambda=0$}

Here, we prove that $\lambda = 0$ is an eigenvalue of $\mathcal{L}^\tau$ for each $\tau \in (0,\tau_m)$
(the eigenvalue associated to translation invariance of the wave, with eigenfunction $(U_x,V_x)^\top$),
and, moreover, that it is simple.

\begin{lemma}\label{lemgm1}
For each $\tau \in (0,\tau_m)$, $\lambda = 0$ is an eigenvalue of $\mathcal{L}^\tau$
with geometric multiplicity equal to one, and with eigenspace generated by $(U_x, V_x)^\top \in H^2(\R;\C^2)$.
\end{lemma}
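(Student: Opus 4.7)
The plan is first to verify by direct computation that $(U_x, V_x)^\top$ lies in the kernel of $\mathcal{L}^\tau$, and then to establish the geometric multiplicity via a dimension count on the first-order reformulation of the spectral problem at $\lambda = 0$.

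First, I would differentiate the profile equations \eqref{profileq} with respect to $x$, obtaining
\begin{equation*}
cU_{xx} + V_{xx} + f'(U) U_x = 0, \qquad U_{xx} + c\tau V_{xx} - V_x = 0,
\end{equation*}
which coincides with \eqref{evalue} at $\lambda = 0$ upon setting $(u,v) = (U_x, V_x)$. The exponential decay estimates \eqref{expodecay}, together with the regularity statement $U_{xx}, V_{xx} \in H^1(\R;\R)$ already noted in \S\ref{sect:linspecstab}, then give $(U_x, V_x)^\top \in H^2(\R;\C^2) \subset D(\mathcal{L}^\tau)$, confirming that it is a genuine eigenfunction.

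To establish that the geometric multiplicity equals one, I would invoke the spectral Kac's transformation from Proposition \ref{prop:propequiv}. Since $\mathcal{K}$ is a bijection between $\ker(\mathcal{L}^\tau - 0)$ and $\ker \mathcal{T}^\tau(0)$, it suffices to prove that the first-order variable-coefficient system $\mathbf{w}_x = \At_0(x) \mathbf{w}$ admits a one-dimensional space of solutions in $H^1(\R;\C^2)$. Thanks to the exponential convergence in \eqref{expodecay}, the coefficient matrix $\At_0(x)$ converges as $x \to \pm\infty$ to constant limits whose eigenvalues solve
\begin{equation*}
(1 - c^2\tau)\mu^2 + c\bigl(1 - \tau f'(U_\pm)\bigr)\mu + f'(U_\pm) = 0.
\end{equation*}
The product of the roots equals $f'(U_\pm)/(1-c^2\tau)$, which is strictly negative by \eqref{bistablef} combined with the subcharacteristic condition \eqref{subchar}; hence each limiting matrix is hyperbolic with exactly one positive and one negative eigenvalue.

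By the standard roughness of exponential dichotomies for asymptotically constant-coefficient systems (cf. \cite{San02}), the subspace of solutions that decay exponentially as $x \to -\infty$ is one-dimensional, and so is the subspace of solutions decaying as $x \to +\infty$. Any $H^1$ solution must lie in the intersection of these two one-dimensional subspaces; since this intersection contains the nontrivial element $\mathcal{K}(U_x, V_x)^\top = (U_x, U_{xx})^\top$, it is exactly one-dimensional. The main technical point is the invocation of the exponential dichotomy machinery required for the dimension count; once that is in place, the remaining algebraic verifications are routine.
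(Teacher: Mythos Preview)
Your proof is correct and takes a genuinely different route from the paper's. The paper reduces the problem to the scalar second-order equation $\mathcal{A}u = 0$, applies a Liouville-type change of variables $u = zw$ to transform $\mathcal{A}$ into a self-adjoint operator $\widetilde{\mathcal{A}}$, and then uses a classical Wronskian argument: any two $L^2$ solutions of $\widetilde{\mathcal{A}}w = 0$ have a Wronskian that is constant and decays at infinity, hence vanishes identically, forcing linear dependence. Your approach instead stays at the level of the first-order system and appeals to the exponential-dichotomy dimension count: since the asymptotic matrices $\At_0(\pm\infty)$ each have a one-dimensional stable and a one-dimensional unstable eigenspace, the decaying subspaces at $\pm\infty$ are each one-dimensional inside the two-dimensional solution space, and their intersection---being nontrivial---must coincide with the span of $(U_x,U_{xx})^\top$. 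Your argument is cleaner and more in keeping with the dichotomy machinery the paper uses elsewhere (e.g.\ in the essential-spectrum analysis of Corollary~\ref{lemspectralgap}); the paper's Wronskian computation is more elementary and self-contained, and has the side benefit of setting up the self-adjoint operator $\widetilde{\mathcal{A}}$ that is reused in the algebraic-multiplicity Lemma~\ref{lemam1}. One minor citation point: the bijectivity of $\mathcal{K}$ between $\ker(\mathcal{L}^\tau - \lambda)$ and $\ker\mathcal{T}^\tau(\lambda)$ is established in the discussion surrounding \eqref{specKac}, not in Proposition~\ref{prop:propequiv} itself, which concerns Jordan chains.
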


\begin{proof}
Differentiate system \eqref{profileq} with respect to $x$ to verify that $(U_x, V_x)$ is a solution
to the spectral system  \eqref{evalue} with $\lambda = 0$, that is,
\begin{equation}\label{lam0syst}
	cU_{xx} + V_x + a(x) V_x = 0, \qquad U_x + c\tau V_{xx} - V_x = 0.
\end{equation}
Due to exponential decay \eqref{expodecay} of the wave, and solving for $U_{xx}$ and $V_{xx}$
in equations \eqref{lam0syst}, we observe that $(U_x, V_x)^\top \in H^2(\R;\C^2) = D(\mathcal{L}^\tau)$.
This shows that $(U_x, V_x)^\top$ belongs to $\ker \mathcal{L}^\tau$ for each $\tau \in (0,\tau_m)$.
Thus, $\lambda = 0 \in \ptsp(\mathcal{L}^\tau)$.
In view of the equivalence established by Kac's spectral transformation (Proposition \ref{prop:propequiv}), this implies that
\begin{equation*}
	\mathbf{w}_0 = \begin{pmatrix}
                 U_x \\ U_{xx}
                \end{pmatrix} \in \ker \mathcal{T}^\tau.
\end{equation*}
To analyze its multiplicity we observe that system \eqref{evalue} with $\lambda = 0$ is equivalent to
the following scalar equation (substitute $\lambda = 0$ in \eqref{2ndorderu}):
\begin{equation}\label{eq:mult01}
	\mathcal{A}u := a_0 u_{xx} + a_1(x) u_x + a_2(x) u = 0,
\end{equation}
where we have introduced the closed, densely defined auxiliary operator
$\mathcal{A} : D(\mathcal{A}) = H^2(\R;\C) \to L^2(\R;\C)$, where $a(x) = f'(U(x))$ as before, and with
\begin{equation*}
	a_0 = 1-c^2 \tau > 0, \quad a_1(x) = c(1-\tau a(x)), \quad a_2(x) = a(x) - c\tau a'(x).
\end{equation*}
Let us denote
\begin{equation*}
	\phi := U_x \in H^2(\R;\C).
\end{equation*}
Clearly, $\phi$ is a bounded solution to \eqref{eq:mult01}, and $\lambda = 0$ is an eigenvalue
of $\mathcal{A}$ associated to the eigenfunction $\phi$.

We shall rewrite \eqref{eq:mult01} in self-adjoint form by eliminating the first derivative.
To this aim, we introduce the new variable $w$ as follows:
\begin{equation}\label{eq:changeuw}
	u(x) = z(x) w(x), \qquad z(x) = \exp \Big(\int^x b(y) \, dy \Big).
\end{equation}
A direct calculation shows that
\begin{equation*}
	u_x = (w_x + wb)z, \qquad u_{xx} = (w_{xx} + 2bw_x + (b_x + b^2)w)z.
\end{equation*}
Upon substitution,
\begin{equation*}
	\mathcal{A}u = \big( a_0 w_{xx} + (a_1(x) + 2a_0b) w_x + (a_0(b_x + b^2) + a_1(x) b + a_2(x))w\big)z.
\end{equation*}
Choose $b = - a_1(x)/2a_0$. This yields
\begin{equation*}
	z_x = - \frac{a_1(x)}{2a_0} z, \qquad z(x) = \exp \Big(- \int^x a_1(y)/2a_0 \, dy \Big),
\end{equation*}
and
\begin{equation*}
	\mathcal{A} u = \big( \widetilde{\mathcal{A}} w\big) z = 0,
\end{equation*}
where the self-adjoint operator $\widetilde{\mathcal{A}} : H^2(\R;\C) \to L^2(\R;\C)$ is defined as
\begin{equation*}
	\widetilde{\mathcal{A}} w := a_0 w_{xx} + h(x) w, \quad
	h(x) = a_2(x) - \tfrac{1}{2}a_1'(x) - \tfrac{1}{4}a_0^{-1} a_1(x)^2.
\end{equation*}
Since $z(x) > 0$ for all $x$, this readily implies that $\mathcal{A} u = 0$
if and only if $w = uz^{-1} \in \ker \widetilde{\mathcal{A}}$. 

Upon inspection, we observe that any decaying solution (at $x = \pm \infty$)
to $\widetilde{\mathcal{A}} w = 0$ converges to zero exponentially with rate
\begin{equation}\label{eq:decayw}
	\mp \frac{1}{2a_0}\sqrt{(a_1^{\pm})^2 - 4a_0f'(U^{\pm})} \, .
\end{equation}
This is true, in particular, for $\varphi = \phi/z$, because of the behavior at $\pm \infty$
of the weight function $z$.
In view of these observations, we conclude that there is a one-to-one correspondence between
$L^2$ eigenfunctions of $\mathcal{A}$ and $\widetilde{\mathcal{A}}$,
determined by the change of variables \eqref{eq:changeuw}.

Now suppose that $u \in H^2$ is a solution to $\mathcal{A} u = 0$.
Therefore, it decays at $x = \pm \infty$ with rate
\begin{equation*}
	- \frac{-a_1^\pm}{2a_0} \mp \frac{1}{2a_0}\sqrt{(a_1^{\pm})^2 - 4a_0f'(U^{\pm})} \, .
\end{equation*}
This implies that $w = uz^{-1}$ is an $L^2$ solution to $\widetilde{\mathcal{A}} w = 0$ which
decays with rate \eqref{eq:decayw}.
Hence, the Wronskian determinant of these two solutions, viz. $w \varphi_x - \varphi w_x$
goes to zero when $x \to \pm \infty$ and, moreover,
\begin{equation*}
	a_0(w \varphi_x - \varphi w_x)_x = a_0(w \varphi_{xx} - \varphi w_{xx}) = h(x)w\varphi - h(x)\varphi w = 0,
\end{equation*}
implying that $w \varphi_x - \varphi w_x = 0$ for all $x \in \R$.
Therefore, $w$ and $\varphi$ (and hence, $u$ and $\phi$) are linearly dependent.

By the equivalence between solutions to the scalar equation \eqref{2ndorderu} and solutions to the
first order system \eqref{firstorders}, we have shown that any bounded solution $\mathbf{w}$ to
$\mathbf{w}_x = \A^\tau(x,0) \mathbf{w}$ is a multiple of $\mathbf{w}_0$, and consequently
$\mathcal{T}^\tau(0)$ is Fredholm with index zero, with non-trivial kernel spanned by $\mathbf{w_0}$.
Whence, $\lambda = 0 \in \ptsp$ with geometric multiplicity equal to one.
By Proposition \ref{prop:propequiv}, this implies, in turn, that $\lambda = 0 \in \ptsp(\mathcal{L}^\tau)$
with geometric multiplicity equal to one, and with associated eigenfunction $(U_x, V_x)^\top$.
%\flushright\qed
\end{proof}

\begin{corollary}
	The adjoint equation 
\begin{equation}\label{adjeq}
\mathbf{y}_x = - \A^\tau(x,0)^* \mathbf{y},
	\end{equation}
has a unique (up to constant multiples) bounded solution 
$\mathbf{y}_0 = (\zeta, \eta )^\top \in H^1(\R;\C^2)$
where $\eta \in H^2(\R;\C)$ is the unique bounded solution to
\begin{equation}\label{eqeta}
	\mathcal{A}^* \eta = a_0 \eta_{xx} - a_1(x) \eta_x + (a_2(x) - a_1'(x)) \eta = 0, 
\end{equation}
and $\mathcal{A}^* : H^2(\R;\C) \to L^2(\R;\C)$ denotes the formal adjoint of the auxiliary operator $\mathcal{A}$.
\end{corollary}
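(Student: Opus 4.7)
The plan is to derive this corollary as a routine consequence of Lemma \ref{lemgm1}, combined with Fredholm duality, a direct passage between the adjoint first-order system and the scalar equation $\mathcal{A}^*\eta=0$, and an exponential dichotomy argument at the endpoints.

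First, I would invoke Lemma \ref{lemgm1}, which shows that $\mathcal{T}^\tau(0)$ is Fredholm of index zero with one-dimensional kernel spanned by $\mathbf{w}_0=(U_x,U_{xx})^\top$. By standard Fredholm duality, the Banach space adjoint $\mathcal{T}^\tau(0)^*:H^1(\R;\C^2)\to L^2(\R;\C^2)$ is also Fredholm of index zero with a one-dimensional kernel. Integration by parts identifies it as $\mathcal{T}^\tau(0)^*\mathbf{y}=-\mathbf{y}_x-\A^\tau(x,0)^*\mathbf{y}$, so bounded $H^1$ solutions to \eqref{adjeq} form a one-dimensional subspace.

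Next, to pass to the scalar characterization, I would write $\mathbf{y}=(\zeta,\eta)^\top$ and read off the two components of \eqref{adjeq}:
\begin{equation*}
\zeta_x=\frac{a_2(x)}{a_0}\,\eta,\qquad \eta_x=-\zeta+\frac{a_1(x)}{a_0}\,\eta.
\end{equation*}
Solving the second relation for $\zeta$, differentiating, and substituting into the first (using that $a_0$ is constant) yields
\begin{equation*}
a_0\eta_{xx}-a_1(x)\eta_x+(a_2(x)-a_1'(x))\eta=0,
\end{equation*}
that is, $\mathcal{A}^*\eta=0$. Conversely, any bounded solution $\eta$ of $\mathcal{A}^*\eta=0$ produces a bounded solution of \eqref{adjeq} by setting $\zeta:=-\eta_x+(a_1(x)/a_0)\eta$. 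This correspondence is a linear isomorphism between the space of bounded solutions of $\mathcal{A}^*\eta=0$ and $\ker\mathcal{T}^\tau(0)^*$, so uniqueness of $\eta$ up to a scalar multiple follows from the one-dimensionality established in the first step.

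For the regularity claim, I would observe that by \eqref{expodecay} the coefficient $a(x)=f'(U(x))$ tends exponentially to $f'(U_\pm)<0$ and $a'(x)\to 0$, so $\A^\tau(x,0)^*$ is asymptotically constant with hyperbolic limiting matrices (the eigenvalues at $x=\pm\infty$ have nonzero real parts, as used in the proof of Lemma \ref{lemgm1} and recorded in \eqref{eq:decayw}). A standard roughness/exponential dichotomy argument for asymptotically autonomous linear systems then forces any bounded solution of \eqref{adjeq} to decay exponentially at both infinities, placing $\mathbf{y}_0\in H^1(\R;\C^2)$; the relation $\mathcal{A}^*\eta=0$ then upgrades $\eta$ to $H^2(\R;\C)$. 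The only mildly delicate point is the verification of the scalar-system equivalence, but it is a direct computation and presents no real obstacle; everything else follows from results already in hand.
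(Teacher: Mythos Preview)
Your proof is correct and follows essentially the same route as the paper: the paper invokes the Fredholm/exponential-dichotomy fact (citing Sandstede) that a one-dimensional kernel for $\mathcal{T}^\tau(0)$ forces a unique bounded solution of the adjoint equation, then reads off the component equations and eliminates $\zeta$ to obtain $\mathcal{A}^*\eta=0$, with regularity by bootstrapping. Your write-up is slightly more explicit about the two-way correspondence between the scalar equation and the system and about the exponential decay giving $H^1$ membership, but the argument is the same.
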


\begin{proof}
Since $\mathcal{T}^\tau(0)$ is Fredholm with index zero and
$\ker \mathcal{T}^\tau(0) = \mathrm{span} \{\mathbf{w}_0\}$, by an exponential dichotomies argument
(see Remark 3.4 in \cite{San02}), the adjoint equation \eqref{adjeq} has a unique bounded solution
$\mathbf{y}_0 \in H^1(\R;\C^2)$. Observing that
\begin{equation*}
	\begin{aligned}
 	- \A^\tau(x,0)^* = -\A_0^\tau(x)^\top &= (1-c^2\tau)^{-1} \begin{pmatrix}
                                                        0 & -(1-c^2 \tau) \\ a(x) - c\tau a'(x) & c(1-\tau a(x))
                                                       \end{pmatrix}^\top \\&= a_0^{-1}\begin{pmatrix}
                                                        0 & a_2(x) \\ - a_0 & a_1(x)
                                                       \end{pmatrix},
	\end{aligned}
\end{equation*}
we arrive at the following system of equations for the components of $\mathbf{y}_0$:
\begin{equation*}
	\zeta_x = a_0^{-1} a_2(x) \eta, \qquad \eta_x = - \zeta + a_0^{-1} a_1(x) \eta.
\end{equation*}
Since the coefficients are bounded, by bootstrapping it is easy to verify that $\eta \in H^2(\R;\C)$.
Thus, differentiate the second equation and substitute the first to arrive at
\begin{equation*}
	a_0 \eta_{xx} - a_1(x) \eta_x + (a_2(x) - a_1'(x)) \eta = 0.
\end{equation*}
That the left hand side of last equation is $\mathcal{A}^* \eta$ follows from a direct calculation
of the formal adjoint.
%\flushright\qed
\end{proof}

\begin{lemma}\label{lemam1}
For each $\tau \in (0,\tau_m)$ the algebraic multiplicity of $\lambda = 0 \in \ptsp(\mathcal{L}^\tau)$ is equal to one.
\end{lemma}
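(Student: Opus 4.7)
By Definition \ref{def:spectT} together with the equivalence established in Proposition \ref{prop:propequiv}, showing that the algebraic multiplicity of $\lambda=0$ equals one is tantamount to proving that the Jordan-chain equation
\[
\mathcal{T}^\tau(0)\mathbf{w}_1 \,=\, (\partial_\lambda \A^\tau)\bigr|_{\lambda=0}\mathbf{w}_0, \qquad \mathbf{w}_0=(U_x,U_{xx})^\top,
\]
has no solution $\mathbf{w}_1\in H^1(\R;\C^2)$. Following the derivation that led to \eqref{eq2}, this is equivalent to the scalar second order problem $\mathcal{A} u = g(x)$ with $g(x):=(1-\tau a(x))U_x - 2c\tau\,U_{xx}$. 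Since $\mathcal{A}$ is Fredholm of index zero with cokernel spanned by the bounded adjoint solution $\eta$ of the previous Corollary, the Fredholm alternative reduces the statement to the strict inequality
\[
I \,:=\, \int_\R \eta(x)\,g(x)\, dx \,\ne\, 0,
\]
and in fact I will show $I>0$.

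The key step is to identify $\eta$ explicitly in terms of $U_x$. The same self-adjoint reduction $\eta = v/z$ used in the proof of Lemma \ref{lemgm1} yields, by a direct computation analogous to the one carried out there, the identity $\mathcal{A}^*\eta = \widetilde{\mathcal{A}}(z\eta)/z$, so $v := z\eta$ belongs to $\ker \widetilde{\mathcal{A}}$. Likewise, differentiating \eqref{profileq} gives $U_x\in\ker\mathcal{A}$, hence $w := U_x/z \in \ker\widetilde{\mathcal{A}}$. The one-dimensionality of the bounded kernel of $\widetilde{\mathcal{A}}$ already established in Lemma \ref{lemgm1} forces $v = C w$, so (normalizing the constant) $\eta(x) = U_x(x)/z(x)^2$. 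Substituting this into $I$, all boundary terms in the ensuing integrations by parts vanish because $U_x^2/z^2$ decays exponentially at $\pm\infty$, as follows from combining the profile decay rates \eqref{eq:decayw} with the asymptotic behavior $z(x)\sim C_\pm e^{-a_1^\pm x/(2a_0)}$. One integration by parts applied to the $U_{xx}$-term together with the identity $(1/z^2)_x = (a_1/a_0)/z^2$ (coming from $z_x/z = -a_1/(2a_0)$) then produces
\[
I \,=\, \int_\R \frac{(1-\tau a)U_x^2}{z^2}\,dx \,+\, \frac{c\tau}{a_0}\int_\R \frac{a_1\,U_x^2}{z^2}\,dx.
\]
Using the compatibility $a_1 = c(1-\tau a)$ (inherited from the symmetric form of $\mathbf{A}$ in \eqref{defABC}) and the trivial identity $a_0 + c^2\tau = 1$, the two integrands consolidate into
\[
I \,=\, \frac{1}{a_0}\int_\R \frac{(1-\tau a(x))\,U_x(x)^2}{z(x)^2}\, dx.
\]
Since $a_0 > 0$ by the subcharacteristic condition \eqref{subchar}, $z > 0$ by construction, $U_x > 0$ by Theorem \ref{thm:existence}(a), and $\tau a(x) = \tau f'(U(x)) < 1$ by the smallness assumption $\tau<\tau_m$, the integrand is strictly positive and $I>0$.

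The main difficulty is purely algebraic: identifying $\eta$ up to a scalar multiple in terms of $U_x$ via the self-adjoint reduction, and then performing the correct manipulations to expose a definite sign integrand. The sign of $I$ is ultimately controlled by the pointwise strict inequality $1-\tau a(x)>0$, so the smallness requirement $\tau<\tau_m$ plays here a role entirely analogous to the one it played in the existence analysis of Theorem \ref{thm:existence}, and can be viewed as the structural condition that pins down non-degeneracy of the translational eigenvalue throughout the admissible range of relaxation times.
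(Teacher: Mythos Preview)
Your proof is correct and follows essentially the same route as the paper's. Both arguments identify the adjoint eigenfunction as $\eta = U_x/z^2$ via the self-adjoint reduction, substitute it into the Melnikov-type integral, integrate by parts using $(z^{-2})_x = a_1/(a_0 z^2)$, and use $a_1 = c(1-\tau a)$ together with $a_0 + c^2\tau = 1$ to obtain a manifestly positive integrand controlled by $1-\tau a(x) > 0$. The only cosmetic differences are that the paper packages the argument as a contrapositive (assuming a Jordan-chain element exists and deducing $\Gamma = 0$, then computing $\Gamma > 0$), whereas you invoke the Fredholm alternative for $\mathcal{A}$ directly; and the paper verifies $\eta = z^{-2}\phi$ by plugging into $\mathcal{A}^*$ and computing, whereas you derive it from the conjugation identity $\mathcal{A}^*\eta = z^{-1}\widetilde{\mathcal{A}}(z\eta)$ and the one-dimensionality of the bounded kernel of $\widetilde{\mathcal{A}}$.
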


\begin{proof}
We define the quantity
\begin{equation}\label{defGamma}
\Gamma := \langle \mathbf{y}_0, \A^\tau_1(x) \mathbf{w}_0 \rangle_{L^2} = \int_{-\infty}^{+\infty} \begin{pmatrix}
                                                                                                    \zeta \\ \eta
                                                                                                   \end{pmatrix}^* \A_1^\tau(x) \begin{pmatrix}
                                                                                                    \phi \\ \phi_x
                                                                                                   \end{pmatrix} \, dx.
\end{equation}
Substituting the expression for $\A_1^\tau$ we obtain
\begin{equation*}
	\Gamma = a_0^{-1} \int_{-\infty}^{+\infty} \overline{\eta} \big( (1-\tau a(x))\phi - 2c\tau \phi_x \big) \, dx.
\end{equation*}
Let us suppose that $(u_1, v_1)^\top \in H^1(\R;\C)$, $(u_1, v_1) \neq 0$, is a non-trivial first element of a Jordan chain for $\mathcal{L}^\tau$ associated to $\lambda = 0$, that is, a solution to 
\begin{equation*}
 \mathcal{L}^{\tau} \begin{pmatrix}
                     u_1 \\ v_1
                    \end{pmatrix} = \begin{pmatrix} U_x \\ V_x
\end{pmatrix}
 \in \ker \mathcal{L}^\tau.
\end{equation*}
Hence, $(u_1, v_1)^\top$ is a solution to system \eqref{e1}  with $\lambda = 0$.
By substitution, this is equivalent to equation \eqref{eq2} for $u_1$ and with $\lambda = 0$, namely to
\begin{equation*}
 \mathcal{A}u_1 = (1-\tau a(x))U_x - 2c\tau U_{xx} = (1-\tau a(x))\phi - 2c\tau \phi_x.
\end{equation*}
Apply the change of variables \eqref{eq:changeuw} to last equation to obtain 
\begin{equation*}
 \mathcal{A}u_1 = z (\widetilde{\mathcal{A}}w_1) = (1-\tau a(x))\phi - 2c\tau \phi_x,
\end{equation*}
where $w_1 = u_1 z^{-1}$.
Now, since $z$ is real and $\widetilde{\mathcal{A}}$ is self-adjoint, we obtain 
\begin{equation*}
	\Gamma = a_0^{-1}  \int_{-\infty}^{+\infty} \overline{z \eta}  \widetilde{\mathcal{A}} w_1 \, dx \\
			= a_0^{-1} \langle z\eta, \widetilde{\mathcal{A}} w_1 \rangle_{L^2}
			= a_0^{-1} \langle \widetilde{\mathcal{A}}(z\eta),  w_1 \rangle_{L^2} \, .
\end{equation*} 
Use equation \eqref{eqeta} to compute 
\begin{equation*}
 (z\eta)_{xx} = - a_0^{-1} h(x) z \eta,
\end{equation*}
yielding $\widetilde{\mathcal{A}}(z\eta) = a_0 (z\eta)_{xx} + h(x)z \eta = 0$. We conclude that $\Gamma = 0$.

Therefore, the contrapositive statement holds true: if $\Gamma \neq 0$ then there exists no non-trivial first 
element
of the Jordan chain. In other words, if we show that $\Gamma \neq 0$ then the algebraic multiplicity
of $\lambda = 0$ is equal to one.
To compute $\Gamma$ we make the observation that the unique bounded solution to
$\mathcal{A}^* \eta = 0$ is precisely $\eta = z^{-2} \phi$.
Indeed, by a direct calculation we get
\begin{equation*}
	\begin{aligned}
	 \eta_x & = \frac{1}{z^2}(\phi_x + a_0^{-1}a_1(x) \phi), \qquad \text{and,}\\
	\eta_{xx} & = \frac{a_0^{-1}a_1(x)}{z^2}(\phi_x + a_0^{-1}a_1(x) \phi)
		+ \frac{1}{z^2}(\phi_{xx} + a_0^{-1} a_1'(x) \phi + a_0^{-1}a_1(x) \phi_x).
	\end{aligned}
\end{equation*}
This yields
\begin{equation*}
 \mathcal{A}^* \eta = \frac{1}{z^2} (a_0 \phi_{xx} + a_1(x) \phi_x + a_2(x) \phi) = \frac{1}{z^2} \mathcal{A}\phi = 0.
\end{equation*}
Substituting in the expression for $\Gamma$, and since $\phi$ is real, after integration by parts one gets
\begin{equation*}
 \begin{aligned}
  \Gamma &= a_0^{-1} \int_{-\infty}^{+\infty} z^{-2} \phi ((1-\tau a(x))\phi - 2c\tau \phi_x) \, dx\\
&= a_0^{-1} \int_{-\infty}^{+\infty} z^{-2} |\phi|^2 (1-\tau a(x)) \, dx - 2c\tau a_0^{-1} \int_{-\infty}^{+\infty} z^{-3}z_x |\phi|^2 \, dx\\
&= a_0^{-1} (1 + c^2 \tau a_0^{-1}) \int_{-\infty}^{+\infty} |1-\tau a(x)| z^{-2}|\phi|^2 \, dx,
 \end{aligned}
\end{equation*}
because $1 - \tau a(x) > 0$ for all $x$ if $\tau \in (0,\tau_m)$.
Since $\phi \not\equiv 0$ a.e. we conclude that $\Gamma > 0$ and the proof is complete.
%\hfill\hfill %\flushright
%\qed
\end{proof}

\begin{remark}
It is well known that the integral 
\begin{equation*}
\Gamma = \langle \mathbf{y}_0, ((d/d\lambda) \A^\tau(x,\lambda))_{|\lambda = 0} \mathbf{w}_0\rangle_{L^2}, 
\end{equation*}
known as a \textit{Melnikov integral}, decides whether $\lambda = 0$ has higher algebraic multiplicity:
$\Gamma$ is proportional, modulo a non-vanishing orientation factor, to the derivative of the Evans
function $D'(0)$ at $\lambda = 0$ (see Definition \eqref{defD} below);
thus, if $\Gamma \neq 0$ then the algebraic multiplicity is equal to one. See \cite{San02},
Section 4.2.1 for further information.
\end{remark}

\subsection{Further properties of the point spectrum}

Next, we use energy estimates to show that there are no 
purely imaginary eigenvalues different from zero.

\begin{lemma}\label{lemma:imagevalue}
If $\lambda$ is an eigenvalue for \eqref{evalue} and $\lambda\in i\R$, then $\lambda=0$.
\end{lemma}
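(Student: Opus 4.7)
The plan is to pass to the scalar second-order equation \eqref{2ndorderu} for $u$ and exploit the subcharacteristic inequality $1-\tau a(x) > 0$ (valid because $\tau \in (0,\tau_m)$) by means of an energy estimate with a real positive integrating factor. Writing $\lambda = i\beta$ with $\beta\in\R$ and recalling $a_0 := 1 - c^2\tau > 0$, a direct computation shows that the coefficients $a_1(x,\lambda)=c(1+\tau(2\lambda-a))$ and $a_2(x,\lambda)=(1+\tau\lambda)(a-\lambda)-c\tau a'$ in \eqref{2ndorderu} split as
\begin{equation*}
 \Real a_1(\cdot,i\beta) = c(1-\tau a),\quad \Imag a_1(\cdot,i\beta) = 2c\tau\beta,\quad \Imag a_2(\cdot,i\beta) = -\beta(1-\tau a),
\end{equation*}
so every imaginary contribution is proportional to $\beta$ times the sign-definite factor $1-\tau a(x)$.

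Mimicking the Liouville transformation $u = zw$ used in the proof of Lemma \ref{lemgm1}, I would introduce the real positive weight
\begin{equation*}
 \mu(x) := \exp\Big( \int^{x} c(1-\tau a(y))/a_0\, dy \Big),
\end{equation*}
which satisfies $a_0\mu'/\mu = \Real a_1$. Multiplying \eqref{2ndorderu} by $\mu\bar u$, integrating over $\R$, and performing one integration by parts in the second-derivative term, the coefficient of $u_x\bar u$ telescopes with $-a_0\mu'$ to the purely imaginary remainder $i\,\Imag a_1 = 2ic\tau\beta$, producing
\begin{equation*}
 -a_0\int\mu|u_x|^2\,dx + 2ic\tau\beta\int\mu u_x\bar u\,dx + \int\mu\,a_2(x,i\beta)|u|^2\,dx = 0.
\end{equation*}
Taking imaginary parts, using $2\Real\!\int\mu u_x\bar u\,dx = -\int\mu'|u|^2\,dx$ together with $\mu'/\mu = c(1-\tau a)/a_0$ and $\Imag a_2 = -\beta(1-\tau a)$, and finally invoking the algebraic identity $a_0+c^2\tau = 1$, the whole identity collapses to
\begin{equation*}
 -\frac{\beta}{a_0}\int_\R \mu(x)(1-\tau a(x))|u(x)|^2\,dx = 0.
\end{equation*}
Since $\mu > 0$, $1-\tau a > 0$, and $u\not\equiv 0$, the integral is strictly positive, which forces $\beta = 0$, as desired.

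The main technical obstacle is to legitimize the weighted integrals and the integration by parts, because $\mu$ grows exponentially at one of the two ends. Since $\mu = |z|^{-2}$ for the Liouville weight $z$, this reduces to verifying that $w := u/z$ belongs to $L^2(\R)$. Any nontrivial $H^2$-solution $u$ of \eqref{2ndorderu} decays exponentially at $\pm\infty$ at the rates given by the roots of $a_0\kappa^2 + a_1^\pm\kappa + a_2^\pm = 0$; for $\beta\ne 0$ the discriminant $D^\pm = (a_1^\pm)^2 - 4a_0 a_2^\pm$ has nonzero imaginary part $\Imag D^\pm = 4\beta(1-\tau a_\pm) \ne 0$, so its principal square root has strictly positive real part. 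This strict inequality creates exactly the gap between the decay rate of $u$ and the growth rate of $|z|$ that guarantees $w\in H^1(\R)$ and validates every step of the energy computation above.
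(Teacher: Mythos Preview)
Your proof is correct and is essentially the same as the paper's, just packaged differently: the paper applies the Liouville transformation $u=zw$ and performs an unweighted energy estimate on $w$, whereas you keep $u$ and introduce the weight $\mu$; since $\mu=z^{-2}$, the identity $\mu|u|^2=|w|^2$ makes the two computations line-by-line equivalent and both collapse to $\beta\int(1-\tau a)|w|^2\,dx=0$. Your final paragraph justifying that $w=u/z\in L^2$ via the discriminant argument is a point the paper's proof leaves implicit, so your write-up is in fact slightly more careful on that score.
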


\begin{proof}
Let $\lambda\in i\R$ be such that equation \eqref{2ndorderu} is satisfied for some function $u$.
Applying the transformation \eqref{eq:changeuw} of the proof of Lemma \ref{lemgm1}, i.e. $u(x)=w(x)\,z(x)$ with $z(x)=\exp(-\int^x b)$ and $b=-a_1/2a_0$, equation is transformed into
\begin{equation*}
 {w_{xx}}+\alpha\lambda\,{w_x}-\beta(x,\lambda)w=0,
\end{equation*}
with
\begin{equation*}
	\begin{aligned}
	\alpha&=\frac{2c\tau}{1-c^2\tau},\\
	\beta(x,\lambda)&=\frac{1}{4a_0^2}\left(a_1(x)^2 -4a_0a_2(x) -2a_0 a_1'(x) \right)
		+\frac{(1-\tau\,a(x))}{a_0^2}\lambda+\frac{\tau}{a_0}\,\lambda^2.
	\end{aligned}
\end{equation*}
Multiplying by $\bar w$, we infer the relation
\begin{equation*}
	\left|w_x\right|^2-\left(\bar w w_x\right)_x
		+ \alpha\lambda \bar w w_x +\beta(x,\lambda)|w|^2=0.
\end{equation*}
Thus, integrating in $\R$ and taking the imaginary part one obtains
\begin{equation*}
	\int_{\R} \Imag\lambda\bigl\{2c\tau(1-c^2\tau)\Real\left({w_x}\bar w\right)\,
		+\big(1-\tau\,a(x)\big)|w|^2\bigr\}dx=0.
\end{equation*}
since, by assumption, $\lambda\in i\R$.
For $\lambda\neq 0$, thanks to the relation
\begin{equation*}
	\Real (\bar w w_x) = \tfrac{1}{2} \big( |w|^2\big)_x,
\end{equation*}
the previous equality implies $w=0$ a.e. since $1-\tau a(x)>0$.
%\flushright\qed
\end{proof}

\subsection{Consistent splitting and essential spectrum}

Let us look at the asymptotic (constant coefficient) systems derived from \eqref{coeffA}
when we take the limit as $x \to \pm \infty$.
If we define the positive parameters
\begin{equation*}
	\begin{aligned}
	0 &< \delta_\pm := - \lim_{x \to \pm \infty} a(x) = - \lim_{x \to \pm \infty} f'(U) = - f'(U_\pm)\\
	0 &< b_\pm := \lim_{x \to \pm \infty} \bigl(1 - \tau f'(U)\bigr) = 1 + \tau \delta_\pm,
	\end{aligned}
\end{equation*}
for each $\tau \in (0,\tau_m)$, then systems \eqref{firstorders} tend to the constant coefficient asymptotic systems
\begin{equation}\label{constcoeff}
	\mathbf{w}_x = \A^\tau_\pm (\lambda) \mathbf{w},
\end{equation}
where
\begin{equation}\label{asymptcoeffA}
	\begin{aligned}
	\At_\pm(\lambda) &:= \lim_{x \to \pm \infty} \At(x,\lambda) \\
			&= (1-c^2 \tau)^{-1}\begin{pmatrix}
				0 & & 1-c^2 \tau \\ \tau \lambda^2 + \lambda b_\pm + \delta_\pm &  &-c(b_\pm + 2 \tau \lambda)
				\end{pmatrix}.
	\end{aligned}
\end{equation}
The location of the essential spectrum of problem \eqref{evalue} is determined by
systems \eqref{constcoeff}.
Let us denote the characteristic polynomial of $\At_\pm(\lambda)$ as
\begin{equation}
\label{defcharpol}
	\pi_\pm^{(\tau, \lambda)}(\kappa) := \det ( \At_\pm(\lambda) - \kappa I).
\end{equation}
Thus, we compute
\begin{equation*}
 	\begin{aligned}
	&\det (\kappa I - (1-c^2 \tau)  \At_\pm(\lambda))
	= \det \begin{pmatrix} \kappa & & -(1-c^2 \tau) \\
		-(\tau \lambda^2 + \lambda b_\pm + \delta_\pm) &  &\kappa -c(b_\pm + 2 \tau \lambda)
		\end{pmatrix}\\
	&\hskip3.25cm 
		= \kappa^2 + \kappa c (b_\pm + 2 \tau \lambda) - (1-c^2 \tau)(\tau \lambda^2 + \lambda b_\pm + \delta_\pm).
	\end{aligned}
\end{equation*}
Hence, if we assume $\kappa = i \xi$, $\xi \in \R$, is a purely imaginary root of \eqref{defcharpol}, then 
\begin{equation}\label{disprel}
	\xi^2 - ic\xi (b_\pm + 2 \tau \lambda) + (1-c^2\tau)(\tau \lambda^2 + b_\pm \lambda + \delta_\pm) = 0.
\end{equation}
Equation \eqref{disprel} is the dispersion relation of wave solutions to the constant coefficient
equations \eqref{constcoeff}. Its $\lambda$-roots, functions of $\xi \in \R$, define algebraic curves
in the complex plane.
They bound the essential spectrum on the right as we shall verify. We denote these curves as
\begin{equation}\label{algcurves}
	\lambda = \lambda_{1,2}^\pm(\xi), \qquad \xi \in \R. 
\end{equation}
It is to be noticed that $\lambda = 0$ does not belong to any of the algebraic curves \eqref{algcurves} inasmuch as
\begin{equation*}
 	\Real (\xi^2 - ic\xi b_\pm + (1-c^2 \tau) \delta_\pm) = \xi^2 + (1-c^2 \tau) \delta_\pm > 0,
\end{equation*}
for all $\xi \in \R$.

\subsubsection{Analysis of the dispersion relation}

Fix $0 < \tau < \tau_m$. Suppose that $\lambda(\xi)$ belongs to one of the algebraic
curves \eqref{algcurves} and denote $\eta := \Real \lambda(\xi)$ and $\beta := \Imag \lambda(\xi)$.
Taking the real and imaginary parts of \eqref{disprel} yields
\begin{equation}\label{realp}
	\xi^2 + 2c\tau \xi \beta + (1-c^2\tau)\big( \tau(\eta^2 - \beta^2) + \eta b_\pm + \delta_\pm\big) = 0.
\end{equation}
\begin{equation}\label{imagp}
	\big((1-c^2 \tau) \beta - c\xi \big) \big( b_\pm + 2 \tau \eta\big) = 0.
\end{equation}
We readily notice that if $\eta = 0$ for some $\xi \in \R$ then from equation \eqref{imagp}
we get $\beta = c\xi/(1-c^2 \tau)$, as $b_\pm > 0$.
Upon substitution in \eqref{realp} we obtain
\begin{equation*}
	\xi^2 + \frac{c^2 \tau \xi^2}{1-c^2 \tau} + (1-c^2 \tau) \delta_\pm = 0,
\end{equation*}
which yields a contradiction with $\delta_\pm > 0$, $\tau > 0$ and the subcharacteristic
condition \eqref{subchar}.
We conclude that the algebraic curves never cross the imaginary axis: they remain in either
the stable or in the unstable complex half plane.
Now, from equation \eqref{imagp} we distinguish two cases:
\begin{align}
	\textrm{either}\quad \eta &= - \frac{b_\pm}{2\tau}, \label{casei}\\
	\textrm{or}\quad \beta &= \frac{c\xi}{1-c^2\tau}. \label{caseii}
\end{align}
Let us first assume \eqref{casei}. Substituting into \eqref{realp} we obtain the equation
\begin{equation}\label{eqforbeta}
	\tau \beta^2 - \frac{2c\tau \xi}{1-c^2\tau}\,\beta - \delta_\pm
	+ \frac{b_\pm^2}{4\tau} - \frac{\xi^2}{1-c^2\tau} = 0.
\end{equation}
This equation has real solutions $\beta$ provided that
\begin{align}
	\Delta_1(\xi) &:= \frac{4c^2 \tau^2 \xi^2}{(1-c^2 \tau)^2}
		- 4\tau \Big( -\delta_\pm + \frac{b_\pm^2}{4\tau} - \frac{\xi^2}{1-c^2\tau}\Big) \geq 0, \nonumber \\
	&\hskip2cm \iff \;\; \xi^2 (1-c^2 \tau)^{-2} + \delta_\pm \geq \frac{b_\pm^2}{4 \tau}. \label{star}
\end{align}
Secondly, substitute \eqref{caseii} into \eqref{realp}.
The result is 
\begin{equation} \label{eqforeta}
	\tau \eta^2 + b_\pm \eta + \delta_\pm + \frac{\xi^2}{(1-c^2\tau)^2} = 0.
\end{equation}
Last equation has real solutions $\eta$ if and only if
\begin{align}
	\Delta_2(\xi) &:= b_\pm^2 - 4\tau \Big( \delta_\pm + \frac{\xi^2}{(1-c^2\tau)^2} \Big) \geq 0, \nonumber \\
 	&\hskip2cm \iff \;\; \xi^2(1-c^2\tau)^{-2} + \delta_\pm \leq \frac{b_\pm^2}{4\tau}. \label{dstar}
 \end{align}
Then, clearly, from \eqref{star} and \eqref{dstar} we have $\sgn \Delta_2 = - \sgn \Delta_1$.
Observe, however, that by definition,
\begin{equation*}
	\frac{b_\pm^2}{4\tau} = \frac{(1+\tau \delta_\pm)^2}{4\tau} > \delta_\pm,
\end{equation*}
because $(1-\delta_\pm \tau)^2 > 0$ for all $0 < \tau < \tau_m =1/\sup |f'|$, $\delta_\pm = |f'(1)|, |f'(0)|$.
Therefore, for small values of $|\xi|$, \eqref{dstar} holds, $\sgn \Delta_2 = +1$, and the only algebraic
curve solutions $\lambda = \lambda(\xi)$ are
\begin{equation}\label{ddstar}
	\Imag \lambda(\xi) = \beta(\xi) = \frac{c\xi}{1-c^2 \tau}, \qquad
	\Real \lambda(\xi) = \eta(\xi) = \frac{1}{2\tau} \Big( - b_\pm \pm \sqrt{\Delta_2(\xi)} \Big).
\end{equation}
Let $\xi^{(0)}_\pm$ be the positive solution to 
\begin{equation*}
	(\xi^{(0)}_\pm)^2 = (1-c^2 \tau)^2 \Big( \frac{b_\pm^2}{4\tau} - \delta_\pm\Big) > 0.
\end{equation*}
Hence, for each $\xi \in (-\xi^{(0)}_\pm,\xi^{(0)}_\pm)$, condition \eqref{dstar} holds and
the algebraic curves are determined by \eqref{ddstar}.
Observe that:
\begin{itemize}
	\item[\textbullet] $\Delta_1(\xi), \Delta_2(\xi) \to 0$, 
	\item[\textbullet] $\eta(\xi) \to - b_\pm / 2\tau$, $\, \beta(\xi) \to \pm c \xi^{(0)}_\pm/(1-c^2 \tau)$,
\end{itemize}
as $|\xi| \uparrow \xi^{(0)}_\pm$. 
This behavior guarantees the continuity of the algebraic curves at $|\xi| = \xi^{(0)}_\pm$,
as $\beta$ tends to the roots of \eqref{eqforbeta} and $\eta$ tends to \eqref{casei}.
Therefore, for $|\xi| \geq \xi^{(0)}_\pm$, $\Delta_1(\xi) \geq 0$ and the solutions
$\lambda(\xi)$ are determined uniquely by
\begin{equation}\label{dddstar}
	\Imag \lambda(\xi) = \beta(\xi) = \frac{c\xi}{1-c^2 \tau} \pm \frac{\sqrt{\Delta_1(\xi)}}{2\tau}, \qquad
	\Real \lambda(\xi) = \eta(\xi) = - \frac{b_\pm}{2\tau}.
\end{equation}
The algebraic curves \eqref{algcurves} in the case of a cubic reaction \eqref{cubicf}, with $\kappa = 1$
and for the parameter value $\alpha = 3/4$ can be found in Figure \ref{figalgcurves}.
To compute them, we approximated the value of the speed $c$ by its lower bound \eqref{estbelow}.

\begin{figure}\centering
\includegraphics[scale=.7, clip=true]{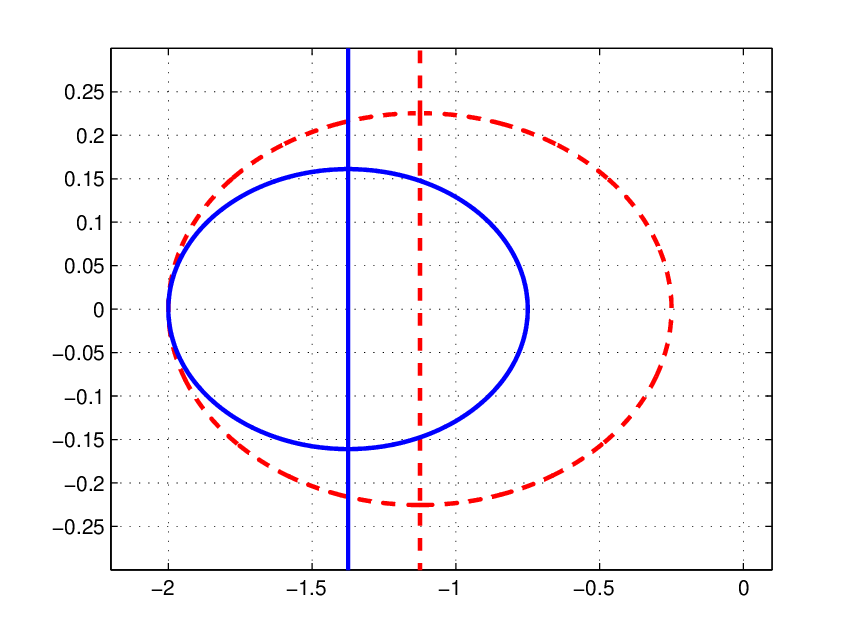}
\caption{Algebraic Fredholm curves \eqref{algcurves} for systems \eqref{constcoeff} in the case of a cubic
nonlinearity \eqref{cubicf} with $\kappa = 1$, $\tau = 1/2$, and unstable state $u = \alpha = 3/4$. 
The value of the speed $c = c(\tau)$ is approximated by its lower bound \eqref{estbelow}.
The curves at $+\infty$, $\lambda_{1,2}^+(\xi)$ are depicted by the solid continuous
(blue; online version) curves, whereas the curves at $-\infty$, $\lambda_{1,2}^-(\xi)$, are represented
by the dashed (red; online version) curves.}
\label{figalgcurves}
\end{figure}

Finally, notice that, for $|\xi| \leq \xi^{(0)}_\pm$, from equation \eqref{ddstar} we obtain the
following bound for the real part of $\lambda$:
\begin{equation*}
	\begin{aligned}
 	\Real \lambda = \eta &= \frac{1}{2\tau} \Big( - b_\pm \pm \sqrt{\Delta_2(\xi)} \Big)
		\leq \frac{1}{2\tau} \Big( - b_\pm + \sqrt{b_\pm^2 - 4 \tau \delta_\pm} \Big)\\
	&= \frac{1}{2\tau} (-(1+\tau \delta_\pm) + (1- \delta_\pm \tau))
		= - \delta_\pm < - \frac{\delta_\pm}{2}.
	\end{aligned}
\end{equation*}
Likewise, when $|\xi| \geq \xi^{(0)}_\pm$, we have the uniform bound 
\begin{equation*}
 \Real \lambda = \eta = - \frac{b_\pm}{2\tau} = - \frac{1+\tau \delta_\pm}{2\tau} < - \frac{\delta_\pm}{2},
\end{equation*}
for all $0 < \tau < \tau_m$. We have proved the following 

\begin{lemma}
For all $\tau \in (0, \tau_m)$, there exists a uniform
\begin{equation}
 \label{defchi0}
\chi_0 := \tfrac{1}{2} \min \{\delta_+, \delta_-\} > 0,
\end{equation} 
such that the algebraic curves $\lambda = \lambda_{1,2}^\pm(\xi)$, $\xi \in \R$, solutions to the dispersion relations \eqref{disprel}, satisfy
\begin{equation}
\label{spectralgapeq}
\mathrm{Re}\, \lambda_{1,2}^\pm(\xi) < - \chi_0 < 0, 
\end{equation}
for all $\xi \in \R$.
\end{lemma}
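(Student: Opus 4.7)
The plan is to collect together the two-regime analysis already carried out in the preceding paragraphs and verify that the resulting bound is uniform in $\xi\in\R$. The curves $\lambda_{1,2}^\pm(\xi)$ split at the critical frequency $|\xi|=\xi^{(0)}_\pm$ determined by $\Delta_2(\xi^{(0)}_\pm)=0$, so I would argue on the two regions separately, using the explicit formulas \eqref{ddstar} in the low-frequency regime and \eqref{dddstar} in the high-frequency regime.

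For $|\xi|\le \xi^{(0)}_\pm$, formula \eqref{ddstar} gives $\Real\lambda(\xi)=\bigl(-b_\pm \pm\sqrt{\Delta_2(\xi)}\bigr)/(2\tau)$. Since $\Delta_2$ is decreasing in $\xi^2$, its maximum on this interval is attained at $\xi=0$, where $\Delta_2(0)=b_\pm^2-4\tau\delta_\pm=(1-\tau\delta_\pm)^2$. Because $\tau<\tau_m$ implies $1-\tau\delta_\pm>0$, we extract the square root and obtain the bound $\Real\lambda(\xi)\le -\delta_\pm$, which is strictly smaller than $-\tfrac{1}{2}\delta_\pm$.

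For $|\xi|\ge \xi^{(0)}_\pm$, formula \eqref{dddstar} gives exactly $\Real\lambda(\xi)=-b_\pm/(2\tau)=-(1+\tau\delta_\pm)/(2\tau)$, and this quantity is smaller than $-\tfrac{1}{2}\delta_\pm$ since $1/(2\tau)>0$. Combining both regimes and setting
\begin{equation*}
\chi_0:=\tfrac{1}{2}\min\{\delta_+,\delta_-\},
\end{equation*}
which is strictly positive by the bistable hypothesis \eqref{bistablef} (so that $\delta_\pm=-f'(U_\pm)>0$), yields the uniform estimate \eqref{spectralgapeq}.

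There is essentially no serious obstacle, only a bookkeeping subtlety: one should note that the supremum of $\Real\lambda(\xi)$ over $|\xi|\le\xi^{(0)}_\pm$ is attained at $\xi=0$ rather than at the transition point, which is what allows the bound $-\delta_\pm$ (stronger than the value $-b_\pm/(2\tau)$ at the transition) to be used. The continuity of the curves at $|\xi|=\xi^{(0)}_\pm$, already remarked just before \eqref{dddstar}, ensures that the two piecewise descriptions match and no extra values of $\Real\lambda$ are missed.
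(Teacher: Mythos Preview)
Your proof is correct and follows essentially the same approach as the paper: splitting into the two frequency regimes $|\xi|\le\xi^{(0)}_\pm$ and $|\xi|\ge\xi^{(0)}_\pm$, using the monotonicity of $\Delta_2$ in $\xi^2$ to bound $\Real\lambda\le-\delta_\pm$ in the first regime, and reading off $\Real\lambda=-b_\pm/(2\tau)<-\delta_\pm/2$ in the second. The paper carries out the identical computation in the paragraphs immediately preceding the lemma statement.
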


\subsubsection{Stability of the essential spectrum}

We define the following open, connected region of the complex plane,
\begin{equation}\label{defOmega}
	\Omega := \{\lambda \in \C \, : \, \Real \lambda > - \chi_0\}.
\end{equation}
It properly contains the unstable complex half plane $\C_+ = \{ \Real \lambda > 0\}$.
Denote ${\mathbb{S}^\tau_\pm}(\lambda)$ and ${\mathbb{U}^\tau_\pm}(\lambda)$ as the stable and unstable
eigenspaces of $\A^\tau_\pm(\lambda)$, respectively.

\begin{lemma}\label{lemconsistsplit}
For all $\tau \in (0,\tau_m)$, and all $\lambda \in \Omega$, the coefficient matrices
$\A^\tau_\pm(\lambda)$ have no center eigenspace and, moreover,
\begin{equation*}
	\dim {\mathbb{S}^\tau_\pm}(\lambda) = \dim {\mathbb{U}^\tau_\pm}(\lambda) = 1.
\end{equation*}
\end{lemma}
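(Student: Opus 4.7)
The plan is to combine the spectral gap \eqref{spectralgapeq} (which rules out purely imaginary eigenvalues of $\At_\pm(\lambda)$) with a connectedness/continuity argument to pin down the dimensions. Since $\At_\pm(\lambda)$ is a $2\times 2$ matrix, the total dimension to distribute is two, so the lemma reduces to showing exactly one eigenvalue in each open half plane for every $\lambda \in \Omega$.

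First, I would rule out a center eigenspace. Suppose, toward a contradiction, that $\kappa = i\xi$, $\xi \in \R$, is an eigenvalue of $\At_\pm(\lambda)$ for some $\lambda \in \Omega$. Then $\pi_\pm^{(\tau,\lambda)}(i\xi) = 0$, which after multiplying out is exactly the dispersion relation \eqref{disprel}. Consequently $\lambda$ must coincide with one of the algebraic curves $\lambda_{1,2}^\pm(\xi)$. But the preceding lemma shows $\Real \lambda_{1,2}^\pm(\xi) < -\chi_0$, while $\lambda \in \Omega$ forces $\Real \lambda > -\chi_0$, a contradiction. Hence no eigenvalue of $\At_\pm(\lambda)$ lies on the imaginary axis for any $\lambda \in \Omega$, so $\At_\pm(\lambda)$ has no center subspace and the whole of $\C^2$ decomposes as $\mathbb{S}_\pm^\tau(\lambda) \oplus \mathbb{U}_\pm^\tau(\lambda)$.

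Next, I would use the fact that the two eigenvalues of $\At_\pm(\lambda)$ depend continuously on $\lambda$ (analytically away from the discriminant locus, and continuously through it, since these are roots of a quadratic with $\lambda$-analytic coefficients). Because their real parts never vanish on the open connected set $\Omega$, the number of eigenvalues with positive (resp.\ negative) real part is locally constant on $\Omega$, hence globally constant. It therefore suffices to evaluate the signature at a single convenient point.

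I would then take $\lambda \in \R$ with $\lambda \to +\infty$. From the explicit formula \eqref{asymptcoeffA} one reads off
\begin{equation*}
\det \At_\pm(\lambda) \, = \, -\,\frac{\tau \lambda^2 + b_\pm \lambda + \delta_\pm}{1-c^2 \tau},
\end{equation*}
which, by virtue of the subcharacteristic condition \eqref{subchar} and the positivity of $\tau$, $b_\pm$, $\delta_\pm$, is strictly negative for all sufficiently large real $\lambda$. Thus the product of the two eigenvalues is negative, meaning one is positive and the other is negative, so $\dim \mathbb{S}_\pm^\tau(\lambda) = \dim \mathbb{U}_\pm^\tau(\lambda) = 1$ at this reference point. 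The continuity argument then extends this count to all $\lambda \in \Omega$, completing the proof.

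The only step requiring any care is the continuity of the eigenspace dimensions through possible points where the two eigenvalues coalesce; this is not a real obstacle, however, because at such points the repeated eigenvalue still has a definite (nonzero) sign of its real part, and the generalized eigenspace for that eigenvalue is two-dimensional only if the entire spectrum is concentrated in one half plane, which our large-$\lambda$ computation rules out by continuity as $\lambda$ varies along $\R$ in $\Omega$. Everything else is routine bookkeeping involving the dispersion relation already analyzed in the preceding paragraphs.
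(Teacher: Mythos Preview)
Your proof is correct and follows essentially the same approach as the paper: rule out purely imaginary eigenvalues via the dispersion relation and the bound \eqref{spectralgapeq}, then use connectedness of $\Omega$ and continuity of eigenvalues to reduce to a single reference point, taken as $\lambda$ real and large. The only cosmetic difference is in the last step: the paper writes out the two roots of the characteristic polynomial via the quadratic formula and observes directly that one is positive and one negative, whereas you compute $\det \At_\pm(\lambda) = -(1-c^2\tau)^{-1}(\tau\lambda^2 + b_\pm \lambda + \delta_\pm) < 0$ and infer opposite real signs from the negative product (which, for a real $2\times 2$ matrix, also forces the eigenvalues to be real). Both arguments are equally valid; your final paragraph about coalescence is unnecessary but harmless.
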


\begin{proof}
Take $\lambda \in \Omega$ and suppose $\kappa = i \xi$, with $\xi \in \R$, is an
eigenvalue of $\A_\pm^\tau(\lambda)$.
Then $\lambda$ belongs to one of the algebraic curves \eqref{algcurves}.
But \eqref{spectralgapeq} yields a contradiction with $\lambda \in \Omega$.
Therefore, the matrices $\A_\pm^\tau(\lambda)$ have no center eigenspace.

Since $\Omega$ is a connected region of the complex plane, it suffices to compute
the dimensions of ${\mathbb{S}^\tau_\pm}(\lambda)$ and ${\mathbb{U}^\tau_\pm}(\lambda)$ when
$\lambda = \eta \in \R_+$, sufficiently large.
The characteristic polynomial \eqref{defcharpol} of $\A_\pm^\tau(\lambda)$ is
\begin{equation*}
	\kappa^2 + \kappa c(b_\pm + 2 \tau \lambda)
		- (1 - c^2 \tau)(\tau \lambda^2 + \lambda b_\pm + \delta_\pm) = 0.
\end{equation*}
Assuming $\lambda = \eta \in \R_+$, the roots are
\begin{equation*}
	\kappa = - \frac{c}{2}(b_\pm + 2 \tau \eta) \pm \frac{1}{2} \sqrt{c^2 (b_\pm + 2\tau \eta)^2
		+ 4(1-c^2\tau)(\tau \eta^2 + \eta b_\pm + \delta_\pm)}.                                                     
\end{equation*}
Clearly, for each $\eta > 0$, one of the roots is positive and the other is negative.
This proves the lemma.
%\flushright\qed
\end{proof}

In view of last result, the region $\Omega$ is often called the \textit{region of consistent splitting} \cite{San02}. 

\begin{corollary}[Stability of the essential spectrum]\label{lemspectralgap}
For each $\tau \in (0,\tau_m)$, the essential spectrum is contained in the stable half-plane.
More precisely,
\begin{equation*}
 	\sigma_{\mathrm{ess}} \subset \{\lambda \in \C \, : \, \Real \, \lambda \leq - \chi_0 < 0\}.
\end{equation*}
\end{corollary}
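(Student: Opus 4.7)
The plan is straightforward: by definition, $\lambda \in \sigma_{\mathrm{ess}}$ if and only if the operator $\mathcal{T}^\tau(\lambda)$ defined in \eqref{defofTau} fails to be Fredholm of index zero. So it suffices to show that $\mathcal{T}^\tau(\lambda)$ is Fredholm with zero index for every $\lambda \in \Omega = \{\Real \lambda > -\chi_0\}$, which will yield $\sigma_{\mathrm{ess}} \subset \C \setminus \Omega$ by contraposition.

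The main input is Lemma \ref{lemconsistsplit}, which establishes that $\Omega$ is a region of consistent splitting: for every $\lambda \in \Omega$ the asymptotic coefficient matrices $\A^\tau_\pm(\lambda)$ are hyperbolic with matching dimensions, namely $\dim \mathbb{S}^\tau_\pm(\lambda) = \dim \mathbb{U}^\tau_\pm(\lambda) = 1$. The exponential decay of the profile \eqref{expodecay} propagates to the coefficient matrix through the explicit expression \eqref{coeffA}, giving
\begin{equation*}
	\|\A^\tau(x,\lambda) - \A^\tau_\pm(\lambda)\| \leq C e^{-\nu |x|}, \qquad \pm x > 0,
\end{equation*}
with $C,\nu > 0$ depending on $\lambda$. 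Under these two conditions, Palmer's theorem (see, e.g., Theorem 3.2 and Remark 3.3 in \cite{San02}) guarantees that the variable-coefficient system $\mathbf{w}_x = \A^\tau(x,\lambda)\mathbf{w}$ admits exponential dichotomies on both half-lines $\R_\pm$, with stable (resp. unstable) subspaces at $\pm\infty$ given by $\mathbb{S}^\tau_\pm(\lambda)$ (resp. $\mathbb{U}^\tau_\pm(\lambda)$).

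Once the existence of the dichotomies on $\R_\pm$ is in hand, one invokes the standard Fredholm/index formula for first-order ODE operators (cf. Lemma 3.1.10 and Theorem 3.1.11 in \cite{KaPro13}, or Theorem 3.2 in \cite{San02}): $\mathcal{T}^\tau(\lambda)$ is Fredholm on $L^2(\R;\C^2)$, and its index equals the Morse-type difference
\begin{equation*}
	\mathrm{ind}\,\mathcal{T}^\tau(\lambda) = \dim \mathbb{U}^\tau_-(\lambda) - \dim \mathbb{U}^\tau_+(\lambda) = 1 - 1 = 0.
\end{equation*}
Hence no $\lambda \in \Omega$ can belong to $\sigma_{\mathrm{ess}}$, so that $\sigma_{\mathrm{ess}} \subset \{\Real\lambda \leq -\chi_0\}$ as claimed. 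Finally, by the equivalence between the two formulations provided by Proposition \ref{prop:propequiv} and the remark following Definition \ref{def:spectT}, the same bound applies to the essential spectrum of $\mathcal{L}^\tau$ in the sense used in Theorem \ref{thm:spectrum}.

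There is no real technical obstacle here: the corollary is an immediate packaging of Lemma \ref{lemconsistsplit} with classical exponential-dichotomy theory. The only point worth emphasizing in writing is that both the dichotomy result and the Fredholm index formula are applied on an open connected set (namely $\Omega$), so that one does not have to track the dimensions $\dim \mathbb{U}^\tau_\pm(\lambda)$ as $\lambda$ varies: they are constant on $\Omega$ by continuity and were computed in Lemma \ref{lemconsistsplit} at a single convenient point $\lambda \in \R_+$.
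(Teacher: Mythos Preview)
Your proposal is correct and follows essentially the same route as the paper: invoke Lemma~\ref{lemconsistsplit} for hyperbolicity and consistent splitting of $\A^\tau_\pm(\lambda)$ on $\Omega$, use the exponential decay of the coefficients to obtain exponential dichotomies on $\R_\pm$, and apply Palmer's Fredholm index formula to conclude $\ind\mathcal{T}^\tau(\lambda)=0$ for all $\lambda\in\Omega$. The paper's proof is slightly terser (it does not spell out the decay estimate on $\A^\tau(x,\lambda)-\A^\tau_\pm(\lambda)$ here), and its index formula is written as $i_+-i_-$ rather than your $\dim\mathbb{U}^\tau_- - \dim\mathbb{U}^\tau_+$, but since both Morse indices equal $1$ this is immaterial.
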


\begin{proof}
Fix $\lambda \in \Omega$. By exponential dichotomies theory
(see \cite{Cop2,San02}), since $\A_\pm^\tau(\lambda)$ are hyperbolic,
the asymptotic systems \eqref{firstorders} have exponential dichotomies in $x \in \R_+ = (0,+\infty)$
and in $x \in \R_- = (-\infty,0)$ with respective Morse indices
\begin{equation*}
	i_+(\lambda) = \dim {\mathbb{U}^\tau_+}(\lambda) = 1, \qquad i_-(\lambda) = \dim {\mathbb{U}^\tau_-}(\lambda) = 1.
\end{equation*}
This implies (see \cite{Pal1,Pal2} and also \cite{San02}) that the variable
coefficient operators $\mathcal{T} ^\tau(\lambda)$ are Fredholm as well, with index
\begin{equation*}
	\ind \mathcal{T} ^\tau(\lambda) = i_+(\lambda) - i_-(\lambda) = 0,
\end{equation*}
showing that $\Omega \subset \C\backslash \ess$, or equivalently,
$\ess \subset \C\backslash \Omega = \{\Real \lambda \leq - \chi_0  < 0\}$. 
%\flushright\qed
\end{proof}

The significance of Corollary \ref{lemspectralgap} is that there is no accumulation of
essential spectrum at the eigenvalue $\lambda = 0$, which is an isolated eigenvalue
with finite multiplicity. In other words, there is a \textit{spectral gap}.

\subsection{Evans function analysis}

The Evans function (cf. \cite{AGJ90,KaPro13,San02}) is a powerful tool to locate the point spectrum.
Thanks to Lemma \ref{lemconsistsplit}, $\Omega$ is the open, connected component of
$\C \backslash \ess$ containing the (unstable) right half-plane in which the asymptotic matrices
$\A^\tau_{\pm}(\cdot)$ are hyperbolic and the dimensions of their stable ${\mathbb{S}^\tau_\pm}$
(respectively, unstable ${\mathbb{U}^\tau_\pm}$) spaces agree.
By spectral separation of ${\mathbb{U}^\tau_\pm}, {\mathbb{S}^\tau_\pm}$, the associated eigenprojections are
analytic in $\lambda$ and there exists analytic representations for the bases of subspaces
${\mathbb{S}^\tau_\pm}$ and ${\mathbb{U}^\tau_\pm}$ (by a Kato construction, cf. \cite[pp.99--102]{Kat80}).
In our special (low dimensional) case, 
\begin{equation*}
	{\mathbb{S}^\tau_+} = \mathrm{span} \{{\mathbf{w}}_+(\lambda)\}, \qquad
	{\mathbb{U}^\tau_-} = \mathrm{span} \{{\mathbf{w}}_-(\lambda)\},
\end{equation*}
where ${\mathbf{w}}_\pm(\lambda)$ can be chosen analytic in $\lambda \in \Omega$.
The associated Evans function
\begin{equation}\label{defD}
	D^\tau(\lambda) := \det ({\mathbf{w}}_-(\lambda), \, {\mathbf{w}}_+(\lambda)),
\end{equation}
is defined to locate non-trivial intersections of the initial conditions ${\mathbf{w}}_+$ which produce solutions
to the variable coefficient systems \eqref{firstorders} that decay when $x \to +\infty$, with the initial
conditions ${\mathbf{w}}_-$ which produce solutions to \eqref{firstorders} that decay at $x \to - \infty$.
The Evans function is not unique, but they all differ by a non-vanishing factor.
It is endowed with the following properties: 
\begin{itemize}
	\item[\textbullet] $D^\tau$ is analytic in $\lambda \in \Omega$;
	\item[\textbullet] $D^\tau(\lambda) = 0$ if and only if $\lambda \in \ptsp \cap \Omega$; and,
	\item[\textbullet] the order of $\lambda$ as a zero of $D^\tau$ is equal to its algebraic multiplicity 
\end{itemize}
In our case, we end up with a family of Evans functions $D^\tau(\cdot)$ indexed by $\tau \in [0,\tau_m)$
and defined on $\Omega$. It is to be observed that the region $\Omega$ is independent of $\tau$,
and that the case when $\tau = 0$ is included in the family.
In view of Proposition \ref{prop:tau0}, which guarantees the spectral stability of the parabolic Allen-Cahn front,
we have the following

\begin{corollary}\label{corD0}
$D^0(\lambda) \neq 0$ for all $\Real \lambda \geq 0$, $\lambda \neq 0$.
Moreover, $\lambda = 0$ is a simple zero of $D^0(\cdot)$.
\end{corollary}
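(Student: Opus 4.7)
The plan is to combine the spectral stability statement of Proposition \ref{prop:tau0} for the parabolic limit with the standard three properties of the Evans function listed just above the corollary: analyticity on $\Omega$, coincidence of its zero set with the point spectrum in $\Omega$, and agreement between the order of a zero and the algebraic multiplicity of the corresponding eigenvalue.

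First, I would observe that the closed right half plane $\{\Real \lambda \geq 0\}$ is contained in the region of consistent splitting $\Omega = \{\Real \lambda > -\chi_0\}$ defined in \eqref{defOmega}, since $\chi_0 > 0$. Therefore $D^0$ is analytic on a neighborhood of $\{\Real \lambda \geq 0\}$, and its zeros in that set are precisely the elements of $\ptsp$ (of the first-order system with $\tau = 0$), counted with algebraic multiplicity in the sense of Definition \ref{def:spectT}.

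Next, I would identify this point spectrum with that of the parabolic operator $\mathcal{L}_0$. At $\tau = 0$ the scalar spectral equation \eqref{2ndorderu} reduces to $\mathcal{L}_0 u = \lambda u$, and the first-order system \eqref{firstorders} is nothing but its companion form via the map $u \mapsto (u, u_x)^\top$. This is a trivial one-to-one correspondence between bounded solutions and Jordan chains, analogous to (but simpler than) the spectral Kac's transformation of Proposition \ref{prop:propequiv}. Consequently $\ptsp(\mathcal{T}^0) = \ptsp(\mathcal{L}_0)$ with identical algebraic and geometric multiplicities.

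Finally, Proposition \ref{prop:tau0} asserts that $\sigma(\mathcal{L}_0) = \{0\} \cup \sigma_-^{(0)}$ with $\sigma_-^{(0)} \subset \{\Real \lambda \leq - \omega_0\}$ and with $\lambda = 0$ an isolated eigenvalue of algebraic multiplicity one. Intersecting with $\{\Real \lambda \geq 0\}$, the only spectral point is $\lambda = 0$ and it is simple. Translating through the identifications above, this yields $D^0(\lambda) \neq 0$ for every $\lambda$ in $\{\Real \lambda \geq 0\} \setminus \{0\}$, and the order of $\lambda = 0$ as a zero of $D^0$ equals one. There is no real obstacle here; the only point that needs a careful sentence is the bookkeeping between the scalar operator $\mathcal{L}_0$ and the first-order formulation used to define $D^0$, which is routine at $\tau = 0$.
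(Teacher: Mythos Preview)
Your proposal is correct and follows exactly the approach the paper intends: the corollary is stated there without a separate proof, simply as an immediate consequence of Proposition~\ref{prop:tau0} combined with the listed Evans function properties. Your added sentence identifying $\ptsp(\mathcal{T}^0)$ with $\ptsp(\mathcal{L}_0)$ via the companion-form map $u\mapsto (u,u_x)^\top$ is the right bookkeeping and is implicit in the paper's presentation.
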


In order to establish spectral stability in the regime $\tau \in (0, \tau_m)$, we shall apply a result from
Evans function theory (see \cite{PZ1}) which assures that, under suitable structural but rather general conditions,
the Evans functions for $\tau > 0$ converge uniformly to the Evans function with $\tau = 0$ in bounded
regions of $\lambda \in \Omega$. For that purpose, it will be necessary to show that large  $|\lambda|$ 
values belong to the resolvent set. By analiticity and uniform convergence, the non-vanishing property of 
$D^0$ persists for $D^\tau$
for each $0 < \tau \ll 1$ sufficiently small.
Next, by continuity in $\tau$ of eigenvalues and by Lemma \ref{lemam1} and  Lemma \ref{lemma:imagevalue}, we rule out possible
crossing of eigenvalues across the imaginary axis as $\tau$ varies within the full set $(0,\tau_m)$,
establishing point spectral stability for all values of $\tau$ under consideration.

Therefore, let us consider the family of first order systems \eqref{firstorders} for $\lambda \in \Omega$
and with $\tau$ varying in a compact set $\mathcal{V} : = [0,\tau_1]$, with $\tau_1 < \tau_m$.
First, we observe that the coefficients $\A^\tau (\cdot,\lambda)$ are functions of
$(\lambda, \tau) \in \Omega \times \mathcal{V}$ into $L^\infty(\R;\R^{2 \times 2})$ (the coefficients are bounded),
they are analytic in $\lambda \in \Omega$ (second order polynomial in $\lambda$), and continuous
in $\tau \in \mathcal{V}$ (this follows from the continuity of the coefficients and of the velocity $c$ in $\tau$).
Moreover, in view of Theorem \ref{thm:existence}(c),
\begin{equation*}
c(\tau) = c_0 + \zeta(\tau), \qquad \zeta(\tau) = o(1) \;  \text{as} \, \tau \to 0^+. 
\end{equation*}
Here $c_0$ is, of course, the speed of the traveling wave for the parabolic Allen-Cahn equation (or Nagumo front).
Also, notice that from the expressions of the coefficients \eqref{asymptcoeffA} we may write
\begin{equation*}
\A^\tau_\pm(\lambda) = \A^0_\pm(\lambda) + (1-c^2 \tau)^{-1} \Q^\tau_\pm(\lambda),
\end{equation*}
where the residual is
\begin{equation*}
	\begin{aligned}
	\Q^\tau_\pm(\lambda) &= \begin{pmatrix} 0 & \; & 0 \\
	\tau \lambda^2 + \tau(c^2 -a(x))\lambda +c\tau a'(x) & \; & c\tau(cc_0 - a(x) -2\lambda) - (c-c_0) \end{pmatrix}\\
	&= (\lambda^2 + \lambda + 1) O(\tau) + O(|\zeta(\tau)|),
	\end{aligned}
\end{equation*}
so that
\begin{equation*}
	|\Q^\tau_\pm(\lambda)| \leq O(\tau + |\zeta(\tau)|)(1+|\lambda| + |\lambda|^2).
\end{equation*}
Thanks to exponential decay of the wave \eqref{expodecay} we conclude that the coefficients
$\At(\cdot,\lambda)$ approach exponentially to its limit coefficients $\At_\pm(\lambda)$ as $x \to \pm \infty$:
\begin{equation*}
	|\At(x,\lambda) - \At_\pm(\lambda)| \leq C e^{-\nu |x|}, \qquad \text{for} \;\; |x| \to +\infty,
\end{equation*}
uniformly on compact subsets of $(\lambda,\tau) \in \Omega \times \mathcal{V}$.
In addition, by Lemma \ref{lemconsistsplit} the limiting coefficient matrices are hyperbolic
with agreeing dimensions of their unstable eigenspaces.

Finally, the geometric separation assumption of Gardner and Zumbrun \cite{GZ98}
(namely, that the limits of the spaces ${\mathbb{S}^\tau_\pm}$ and ${\mathbb{U}^\tau_\pm}$
along $\lambda$-rays, $\lambda = r\lambda_0$ as $r \to 0^+$, $\lambda_0 \in \Omega$, are continuous)
holds trivially in our case as the matrices $\At_\pm(0)$ are hyperbolic and the eigenspaces are
one-dimensional with uniform spectral separation.

To sum up, we have verified that assumptions (A0)-(A1)-(A2) in \cite[p.894]{PZ1} are satisfied,
and the systems \eqref{firstorders} belong to the generic class of equations for which there is
convergence of approximate flows  \cite[Section 2]{PZ1}.
We need to verify one final hypothesis to apply \cite[Proposition 2.4]{PZ1}.

\begin{lemma}
\label{lemapprox}
Let $(\lambda,\tau) \in \Omega \times \mathcal{V}$.
Then the stable eigenvector ${\mathbf{w}}_+(\lambda)$ of $\At_+(\lambda)$ and the unstable
eigenvector ${\mathbf{w}}_-(\lambda)$ of $\At_-(\lambda)$ converge as $\tau \to 0^+$ with
rate $\eta(\tau) := O(\tau + |\zeta(\tau)|)$ to the stable and unstable eigenvectors of $\A^0_+(\lambda)$
and $\A^0_-(\lambda)$, respectively.
Moreover, for all $\tau \in \mathcal{V}$,
\begin{equation*}
|(\At - \At_\pm) - (\A^0 - \A^0_\pm)| \leq C_1 \eta(\tau) e^{-\tilde \nu |x|},
\end{equation*}
as $x \to \pm \infty$ for some constants $C_1, \tilde \nu > 0$, uniformly in compact sets of $\Omega$.
\end{lemma}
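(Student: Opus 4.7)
The plan is to treat the two assertions separately, exploiting the explicit decomposition of $\At_\pm(\lambda)$ already derived just before the statement together with the exponential convergence of the wave profile. For the eigenvector convergence, I start from the identity
\[
\At_\pm(\lambda) = \A^0_\pm(\lambda) + (1-c^2\tau)^{-1} \Q^\tau_\pm(\lambda),
\]
in which, on any compact subset $K \subset \Omega$, the residual satisfies $|\Q^\tau_\pm(\lambda)| = O(\eta(\tau))$ uniformly in $\lambda \in K$. By Lemma \ref{lemconsistsplit} applied at $\tau = 0$, the matrices $\A^0_\pm(\lambda)$ are hyperbolic throughout $\Omega$, with one-dimensional stable and unstable eigenspaces separated by a spectral gap depending only on $K$. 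Standard analytic perturbation theory for simple eigenvalues of $2\times 2$ matrices (or, equivalently, direct differentiation of the explicit quadratic eigenvalue formula) then delivers eigenvectors $\mathbf{w}_\pm(\lambda)$ of $\At_\pm(\lambda)$ at distance $O(\eta(\tau))$ from the corresponding eigenvectors of $\A^0_\pm(\lambda)$, uniformly on $K$.

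For the bound on $(\At - \At_\pm) - (\A^0 - \A^0_\pm)$, observe from \eqref{coeffA} that both $\At(x,\lambda) - \At_\pm(\lambda)$ and $\A^0(x,\lambda) - \A^0_\pm(\lambda)$ depend on $x$ solely through $a(x) = f'(U(x))$ and $a'(x)$ (respectively $\bar a(x) = f'(U^0(x))$ and $\bar a'(x)$). By \eqref{expodecay} these quantities converge to their asymptotic values $-\delta_\pm$ and $0$ exponentially with rate $\nu(\tau) > 0$; setting $\tilde\nu := \tfrac12 \inf_{\tau \in \mathcal{V}} \nu(\tau) > 0$ (positive by continuity of $\nu$ on the compact set $\mathcal{V}$), the decay rate is uniform in $\tau$. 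A direct componentwise computation then decomposes the target expression into two types of terms: (i) those carrying an explicit factor of $\tau$ or of $c(\tau) - c_0 = \zeta(\tau)$, multiplied by an exponentially decaying function of $x$, and (ii) a residual involving the difference $a(x) - \bar a(x) = f''(\xi)\bigl( U(x,\tau) - U^0(x)\bigr)$, again modulated by exponentially decaying weights. Type (i) contributions immediately satisfy the bound $C \eta(\tau) e^{-\tilde\nu |x|}$ thanks to the boundedness of $a, a', \bar a, \bar a'$.

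The main obstacle is controlling the type (ii) contributions, which requires a uniform estimate of the form $\|U(\cdot,\tau) - U^0(\cdot + \delta(\tau))\|_\infty = O(\eta(\tau))$ for a suitable translation $\delta(\tau) \to 0$. This is strictly stronger than Theorem \ref{thm:existence}(c), which only asserts continuous dependence of the \emph{speed}. I would establish it by upgrading the phase-plane argument of Section \ref{sect:existence}: the equilibria $(0,0)$ and $(1,0)$ are hyperbolic uniformly in $\tau \in \mathcal{V}$, so by smooth dependence of local stable and unstable manifolds of hyperbolic fixed points on the parameter, together with smooth dependence of the heteroclinic connection once a normalization (say, $U(0,\tau) = 1/2$) is imposed, the map $\tau \mapsto U(\cdot,\tau)$ is $C^1$ with values in $C^2_{\mathrm{unif}}(\R)$, yielding $\|U(\cdot,\tau) - U^0\|_\infty = O(\tau)$. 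Combined with the exponentially decaying weights, this delivers the required bound and closes the proof of the second assertion.
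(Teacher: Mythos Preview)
Your treatment of the eigenvector convergence is essentially the paper's argument: the paper writes out the spectral projection as a contour integral $\mathbf{P}^\tau_+(\lambda)=\frac{1}{2\pi i}\oint_\gamma (z-\At_+(\lambda))^{-1}\,dz$, expands the resolvent using $\At_\pm=\A^0_\pm+(1-c^2\tau)^{-1}\Q^\tau_\pm$ and a uniform bound for $(\A^0_+-z)^{-1}$ on $\gamma$, and reads off $\mathbf{P}^\tau_+=\mathbf{P}^0_++O(\eta(\tau))$. Your appeal to analytic perturbation of simple eigenvalues is the same mechanism.

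For the second assertion the paper merely says it is ``an immediate consequence of the exponential decay \eqref{expodecay} of the coefficients''. You are right to flag that this hides a genuine issue: since $\At(x,\lambda)$ is built from $a(x)=f'(U(x,\tau))$ while $\A^0(x,\lambda)$ uses $\bar a(x)=f'(U^0(x))$, the difference $(\At-\At_\pm)-(\A^0-\A^0_\pm)$ contains a term $-(a-\bar a)$ in the $(2,1)$ entry \emph{with no extra exponential weight} in front of it. So your claim that the type~(ii) residual is ``modulated by exponentially decaying weights'' is not accurate for that entry, and the sup-norm estimate $\|U(\cdot,\tau)-U^0\|_\infty=O(\tau)$ you propose does not by itself yield $|a(x)-\bar a(x)|\leq C\eta(\tau)e^{-\tilde\nu|x|}$: knowing separately that $|a-\bar a|\leq C e^{-\tilde\nu|x|}$ and $|a-\bar a|\leq C\eta(\tau)$ does not give the product bound.

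The fix is precisely the argument you sketch, but carried out in an exponentially weighted space rather than in $L^\infty$. Smooth dependence of the local (un)stable manifolds of the hyperbolic equilibria $(0,0)$ and $(1,0)$ on $\tau$, together with a normalization such as $U(0,\tau)=1/2$ and smooth dependence of solutions of ODEs on parameters on compact $x$-intervals, gives $\sup_x e^{\tilde\nu|x|}|U(x,\tau)-U^0(x)|=O(\tau)$ for any $\tilde\nu$ strictly below the uniform hyperbolic decay rate on $\mathcal V$. This weighted profile convergence then controls $a-\bar a$ (and likewise $a'-\bar a'$) with the required factor $\eta(\tau)e^{-\tilde\nu|x|}$, and the remaining terms are of type~(i) as you describe. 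In short: your plan is correct in spirit and more explicit than the paper's one-line justification, but the key estimate you need is $e^{\tilde\nu|x|}|U(x,\tau)-U^0(x)|=O(\tau)$, not merely the $L^\infty$ version.
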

\begin{proof}
Let $\gamma$ be a closed rectifiable contour enclosing the stable eigenvalue of $\At_\pm$. By continuity on $\tau$, we may as well select $\gamma$ such that it encloses the stable eigenvalue of $\At_\pm$ for each $\tau > 0$ sufficiently small. By compactness of $\bar \gamma$ we have a uniform resolvent bound of the form
\begin{equation*}
 |(\A^0_+ - z)^{-1}| \leq C, \qquad z \in \gamma.
\end{equation*}
Thus, expanding,
\begin{equation*}
 \begin{aligned}
  (\At_+ - z)^{-1} &= (\A^0_+ - z + (1-c^2 \tau)^{-1}\Q^\tau_+)^{-1}\\
&= ((\A^0_+ - z)(I + (\A^0_+ - z)^{-1}(1-c^2\tau)^{-1}\Q^\tau_+))^{-1}\\
&= (I + (\A^0_+ - z)^{-1}(1-c^2\tau)^{-1}\Q^\tau_+)^{-1}(\A^0_+ - z)^{-1}\\
&= ( I + O(|\eta(\tau)|))(\A^0_+ - z)^{-1}.
 \end{aligned}
\end{equation*}
Here $\eta(\tau)$ depends on $|\A^0_+|$. Therefore, the (one-dimensional) projection $\mathbf{P}^\tau_+$ onto ${\mathbb{S}^\tau_+}$ satisfies the bound 
\begin{equation*}
	\begin{aligned}
	|\mathbf{P}^\tau_+(\lambda) - \mathbf{P}^0_+(\lambda)|
		&= \left| \frac{1}{2\pi i} \oint_\gamma (z-\A^\tau_+(\lambda))^{-1} \, dz \; - \; \frac{1}{2\pi i}
		\oint_\gamma (z-\A^0_+(\lambda))^{-1} \, dz\right|\\
	&\leq \frac{1}{2\pi i} \oint_\gamma O(|\eta(\tau)|)|(\A^0_+ -z)^{-1}| \, dz \leq C |\eta(\tau)|,
	 \end{aligned}
\end{equation*}
that is, $\mathbf{P}^\tau_+(\lambda) = \mathbf{P}^0_+(\lambda) + O(|\eta(\tau)|)$,
showing that ${\mathbf{w}}_+(\lambda) \to {\mathbf{w}}_+^0(\lambda)$ as $\tau \to 0^+$ with
rate $|\eta(\tau)|$ in $\Omega$-neighborhoods of $\lambda$.
The same applies to the unstable  eigenvectors.
The second assertion is an immediate consequence of the exponential decay \eqref{expodecay}
of the coefficients.
%\flushright\qed
\end{proof}

\subsection{Resolvent estimates and proof of Theorem \ref{thm:spectrum}}

In order to give a complete proof of Theorem \ref{thm:spectrum}, we establish a general resolvent
estimates, based on an approximate diagonalization technique introduced by Mascia and Zumbrun \cite{MasZum02}.

\subsubsection{Resolvent estimates}

Given two real symmetrix matrices $\mathbf{A}, {\mathbf{B}}\in\R^{n\times n}$,
a real matrix valued function $x\mapsto \mathbf{C}(x)$ defined for any $x\in\R$, 
and $\lambda\in\C$, let us consider the {\it resolvent equation}
\begin{equation}\label{resolvent}
	\mathbf{A}{\mathbf{w}_x}+(\lambda {\mathbf{B}}+\mathbf{C}(x)) \mathbf{w} = \mathbf{f}, \qquad \mathbf{f} \in H^m(\R;\C^n),
\end{equation}
for the unknown $\mathbf{w}\in H^m(\R;\C^n)$.
Here, the concern is to show that, for appropriate choices of sets $\tilde \Omega\subset\C$,
there exists some constant $M$ such that for any solution $\mathbf{w}=\mathbf{w}(\cdot, \,\lambda)$
to \eqref{resolvent} with $\lambda\in \tilde \Omega$, there holds
\begin{equation}\label{genres}
	|\mathbf{w}(\cdot,\lambda)|_{H^m}\leq M|\mathbf{f}|_{H^m}.
\end{equation}
The first classical result, consequence of the underlying hyperbolic problem from which
\eqref{resolvent} arises, provides an estimate for sets $\tilde \Omega$ composed by numbers
with large positive real part.

\begin{proposition}\label{prop:resesti0}
Given $m\in\mathbb{N}$, let ${\mathbf{B}}$ be positive definite and $\mathbf{C}=\mathbf{C}(x)$ uniformly
bounded in $\R$ together with its derivatives of order $j=1,\dots,m$.
Then there exist $L,M>0$ such that for any $\lambda$
with $\Real \lambda\geq L$ there holds
\begin{equation}\label{res0}
	|\mathbf{w}(\cdot, \lambda)|_{H^m}\leq \frac{M}{\Real \lambda}\,|\mathbf{f}|_{H^m}.
\end{equation}
\end{proposition}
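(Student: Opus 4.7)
The plan is to establish the $L^2$ estimate first, by a standard symmetrizer/energy argument exploiting the symmetry of $\mathbf{A}$ and the positive definiteness of $\mathbf{B}$, and then to pass to $H^m$ by differentiating the equation and using commutator estimates. Concretely, I would take the $L^2$ inner product of \eqref{resolvent} with $\mathbf{w}$ and take real parts. Because $\mathbf{A}$ is real symmetric, we have $\mathrm{Re}\,\langle \mathbf{w}, \mathbf{A}\mathbf{w}_x\rangle_{L^2} = \tfrac{1}{2}\int_{\mathbb{R}}(\mathbf{w}^*\mathbf{A}\mathbf{w})_x\,dx = 0$ (using that $\mathbf{w} \in H^1$ decays at infinity), so the principal part drops out and we are left with
\begin{equation*}
(\mathrm{Re}\,\lambda)\,\langle \mathbf{B}\mathbf{w},\mathbf{w}\rangle_{L^2} + \mathrm{Re}\,\langle \mathbf{C}(\cdot)\mathbf{w},\mathbf{w}\rangle_{L^2} = \mathrm{Re}\,\langle \mathbf{f},\mathbf{w}\rangle_{L^2}.
\end{equation*}
Since $\mathbf{B}$ is symmetric positive definite, there exists $\beta>0$ with $\langle \mathbf{B}\mathbf{w},\mathbf{w}\rangle_{L^2} \geq \beta|\mathbf{w}|_{L^2}^2$, while the boundedness of $\mathbf{C}$ gives $|\mathrm{Re}\,\langle \mathbf{C}\mathbf{w},\mathbf{w}\rangle_{L^2}| \leq K_0|\mathbf{w}|_{L^2}^2$ for some $K_0 = |\mathbf{C}|_{L^\infty}$. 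Picking $L$ so that $\beta L - K_0 \geq \beta L/2$ and applying Cauchy--Schwarz on the right-hand side, we get the base estimate $|\mathbf{w}|_{L^2} \leq (2/\beta)(\mathrm{Re}\,\lambda)^{-1}|\mathbf{f}|_{L^2}$.

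To upgrade to $H^m$, I would differentiate \eqref{resolvent} $j$ times with respect to $x$. Denoting $\mathbf{w}^{(j)} = \partial_x^j \mathbf{w}$, we obtain
\begin{equation*}
\mathbf{A}\,\partial_x \mathbf{w}^{(j)} + (\lambda \mathbf{B} + \mathbf{C}(x))\mathbf{w}^{(j)} = \partial_x^j \mathbf{f} - [\partial_x^j, \mathbf{C}(x)]\mathbf{w},
\end{equation*}
where the commutator $[\partial_x^j, \mathbf{C}]\mathbf{w} = \sum_{k=1}^{j}\binom{j}{k}\mathbf{C}^{(k)}(x)\mathbf{w}^{(j-k)}$ involves only derivatives of $\mathbf{C}$ of order up to $j$ (which are uniformly bounded by hypothesis) and derivatives of $\mathbf{w}$ of order strictly less than $j$. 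The same energy identity applied to $\mathbf{w}^{(j)}$ yields
\begin{equation*}
|\mathbf{w}^{(j)}|_{L^2} \leq \frac{M_j}{\mathrm{Re}\,\lambda}\bigl(|\partial_x^j \mathbf{f}|_{L^2} + |\mathbf{w}|_{H^{j-1}}\bigr),
\end{equation*}
valid for $\mathrm{Re}\,\lambda \geq L$ (possibly enlarging $L$ in terms of $K_0,\ldots,|\mathbf{C}^{(m)}|_{L^\infty}$).

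Finally, I would iterate this bound in $j = 1, \ldots, m$: the inductive hypothesis gives $|\mathbf{w}|_{H^{j-1}} \leq M_{j-1}(\mathrm{Re}\,\lambda)^{-1}|\mathbf{f}|_{H^{j-1}}$, which combines with the previous display (after choosing $L$ large enough to dominate the geometric factors $M_j/\mathrm{Re}\,\lambda$ that feed back into the estimate) to produce $|\mathbf{w}^{(j)}|_{L^2} \leq M(\mathrm{Re}\,\lambda)^{-1}|\mathbf{f}|_{H^j}$, and summing yields \eqref{res0}. The only real obstacle is the commutator bookkeeping: one must track the dependence of $L$ and $M$ on $m$ to ensure the lower-order contributions from $[\partial_x^j, \mathbf{C}]\mathbf{w}$ can be absorbed into the left-hand side at every step, which is why the threshold $L$ depends on $m$ through the bounds on the derivatives $\mathbf{C}^{(k)}$, $k \leq m$.
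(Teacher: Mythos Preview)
Your proof is correct and follows essentially the same route as the paper: an $L^2$ energy estimate exploiting the symmetry of $\mathbf{A}$ (so the transport term integrates to zero) and the positive definiteness of $\mathbf{B}$, followed by differentiation and an induction on $m$ using the commutator $[\partial_x^j,\mathbf{C}]$ as a lower-order forcing. The only cosmetic difference is that the paper uses Young's inequality rather than Cauchy--Schwarz on the right-hand side, which leads to the same conclusion.
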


\begin{proof}
As a first step, let us consider the case of $L^2$, i.e. $m=0$.
We recall that
if $\mathbf{A}$ is a constant real symmetrix matrix then
\begin{equation*}
 \mathbf{w}^* \mathbf{A}{\mathbf{w}_x} = \tfrac{1}{2}\left(\mathbf{w}^* \mathbf{A}\mathbf{w}\right)_x.
\end{equation*}
Taking the scalar product of \eqref{resolvent} against $\mathbf{w}$ and integrating in $\R$, we get
\begin{equation*}
	\lambda \langle {\mathbf{w}, \mathbf{B}}\mathbf{w}\rangle_{L^2}+\langle \mathbf{w},
	\mathbf{C}(x)\mathbf{w} \rangle_{L^2} = \langle \mathbf{w}, \mathbf{f} \rangle_{L^2}.
\end{equation*}
Since ${\mathbf{B}}$ is symmetric, the term $\langle \mathbf{w},{\mathbf{B}}\mathbf{w}\rangle_{L^2}$ is real; thus,
taking the real part, we infer that there exists some constant $C>0$ such that
\begin{equation*}
	(\Real \lambda) \langle \mathbf{w}, \mathbf{B}\mathbf{w}\rangle_{L^2}
	\leq  C\left(|\mathbf{w}|_{L^2}^2+|\mathbf{f}|_{L^2}^2\right)
\end{equation*}
having used the standard Young inequality.
As a final step, under the hypothesis that ${\mathbf{B}}$ is positive definite, it is possible
to absorb the term with $\mathbf{w}$ at the right-hand side into the corresponding term
in the left-hand side and deduce \eqref{res0} with $m = 0$.

Next, we may proceed inductively, assuming estimate \eqref{res0} for any $j=0,1,\dots,m-1$.
By differentiation of \eqref{resolvent}, we deduce that the function 
$\mathbf{z}:=d^m \mathbf{w}/dx^m$ solves
\begin{equation}\label{resolvent_kth}
	\mathbf{A} \mathbf{z}_x + (\lambda {\mathbf{B}}+\mathbf{C}(x)){\mathbf{z}}=\frac{d^m\mathbf{f}}{dx^m}
		-\sum_{j=0}^{m-1} \binom{m}{j} \frac{d^{m-j} \mathbf{C}}{dx^{m-j}}\,\frac{d^j \mathbf{w}}{dx^j}.
\end{equation}
Since the coefficients of the derivatives of $\mathbf{C}$ are assumed to be bounded,
as a consequence of \eqref{res0} with $m=0$, we infer the estimate
\begin{equation*}
	|z(\cdot, \,\lambda)|_{{L^2}}\leq \frac{C}{\Real\,\lambda}
	\left(\left|\frac{d^m\mathbf{f}}{dx^m}\right|_{{L^2}}
		+\sum_{j=0}^{m-1} \left|\frac{d^jw}{dx^j}\right|_{{L^2}}\right)
\end{equation*}
for some $C>0$.
Then, the inductive assumption provides the conclusion.
%\flushright\qed
\end{proof}

Next, we turn to the problem of proving \eqref{genres} for sets $\tilde \Omega$ contained
in the half plane $\{\Real \lambda\geq 0\}$ and with sufficiently large modulus.
Of course, additional restrictions on the matrices $\mathbf{A}, {\mathbf{B}}, \mathbf{C}$ are needed.
Our approach is based on the approximate diagonalization procedure presented
in \cite[p.817 and following]{MasZum02}.
Specifically, let us consider a change of variable $\mathbf{w}=\mathbf{T}(x,\lambda)\mathbf{v}$ with
$\mathbf{T}(x,\lambda)$ in the form
\begin{equation*}
	\mathbf{T}(x,\lambda)=\mathbf{T}_0(I+\lambda^{-1}\mathbf{T}_1(x)),
\end{equation*}
with ${\mathbf{T}_0}$ and ${\mathbf{T}_1}={\mathbf{T}_1}(x)$ to be determined.
Plugging into \eqref{resolvent} and assuming $\mathbf{A}$ invertible,
we deduce the equation solved by the new unknown $\mathbf{v}$
\begin{equation*}
	\mathbf{v}_x+\widetilde{\mathbf{A}}(x,\lambda)\mathbf{v}=\tilde{\mathbf{f}},
\end{equation*}
where
\begin{equation*}
	\begin{aligned}
	\widetilde{\mathbf{A}}(x,\lambda)
		& = (I+\lambda^{-1}{\mathbf{T}_1})^{-1} \Bigl\{\lambda {\mathbf{T}_0}^{-1}\mathbf{A}^{-1}{\mathbf{B}}{\mathbf{T}_0}
		+{\mathbf{T}_0}^{-1}\mathbf{A}^{-1}\bigl({\mathbf{B}}{\mathbf{T}_0}{\mathbf{T}_1}+\mathbf{C}{\mathbf{T}_0}\bigr)\\
	&\hskip5.0cm		
+\lambda^{-1}\bigl({\mathbf{T}_0}^{-1}\mathbf{A}^{-1}\mathbf{C}{\mathbf{T}_0}{\mathbf{T}_1}+{\mathbf{T}_1}
'\bigr)\Bigr\},\\	
\tilde{\mathbf{f}}&=(I+\lambda^{-1}{\mathbf{T}_1}(x))^{-1}{\mathbf{T}_0}^{-1}\mathbf{A}^{-1}\mathbf{f}.
	\end{aligned}
\end{equation*}
The matrix $\widetilde{\mathbf{A}}$ can be represented as
\begin{equation*}
	\widetilde{\mathbf{A}}(x,\lambda)=\lambda \mathbf{D}_0+{\mathbf{D}_1}+o(1)\qquad\qquad\lambda\to\infty
\end{equation*}
where the matrices $\mathbf{D}_0$ and ${\mathbf{D}_1}$ are given by
\begin{equation}\label{D01form}
	\mathbf{D}_0:={\mathbf{T}_0}^{-1}\mathbf{A}^{-1}{\mathbf{B}}{\mathbf{T}_0},\qquad
	{\mathbf{D}_1}:=[\mathbf{D}_0,{\mathbf{T}_1}(x)]+{\mathbf{T}_0}^{-1}\mathbf{A}^{-1}\mathbf{C}(x){\mathbf{T}_0},
\end{equation}
and $[\mathbf{A},{\mathbf{B}}]=\mathbf{A}{\mathbf{B}}-{\mathbf{B}}\mathbf{A}$.
If the matrix $\mathbf{A}^{-1}{\mathbf{B}}$ is diagonalizable, ${\mathbf{T}_0}$ can be chosen so that $\mathbf{D}_0$
is diagonal and ${\mathbf{T}_1}={\mathbf{T}_1}(x)$ is such that the matrix ${\mathbf{D}_1}$ is diagonal
(see \cite[Lemma 4.6]{MasZum02}).
Moreover, since $[\mathbf{D}_0,\mathbf{T}]$ is equal to zero for any diagonal matrix, the term ${\mathbf{T}_1}$
can be chosen with zero entries in the principal diagonal.
With these choices, the special form of the system satisfied by $\mathbf{v}$ can be used
to obtain a different form of estimate \eqref{genres}.

\begin{proposition}\label{prop:resesti1}
Given $m\in\mathbb{N}$, let the matrices $\mathbf{A}$ ${\mathbf{B}}$ and $\mathbf{C}$ be such that $\mathbf{A}$ is invertible,
$\mathbf{A}^{-1}{\mathbf{B}}$ is diagonalizable in $\R$, and $\mathbf{C}=\mathbf{C}(x)$ is uniformly bounded in $\R$ together
with all its derivatives of order $j=1,\dots,m$.
Let the elements of $\mathbf{D}_0$ and ${\mathbf{D}_1}$, defined in \eqref{D01form}, be denoted by
$\mu_0^k$ and $\mu_1^k$, $k=1,\dots,n$.
If $\mu_0^k$ and $\Real \mu_1^k$ have the same sign for any $k$ and 
\begin{equation}\label{mind1k}
	\min_{k=1,\dots,n}\inf_{x\in\R} |\Real \mu_1^k|>0,	
\end{equation}
then there exist $R>0$ and $M$ such that the estimate \eqref{genres} holds
for any $\lambda\in\tilde\Omega_R:=\{|\lambda|\geq R,\,\Real \lambda\geq 0\}$.
\end{proposition}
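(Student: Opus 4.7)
The plan is to run the approximate diagonalization scheme already developed in the discussion preceding the statement. Choose $\mathbf{T}_0$ diagonalizing $\mathbf{A}^{-1}\mathbf{B}$ (possible by hypothesis), pick $\mathbf{T}_1(x)$ with vanishing diagonal so that $\mathbf{D}_1$ given by \eqref{D01form} is diagonal, and perform the change of variable $\mathbf{w}=\mathbf{T}_0(I+\lambda^{-1}\mathbf{T}_1(x))\mathbf{v}$. For $|\lambda|$ above a threshold $R_0$ depending only on $\sup_x|\mathbf{T}_1(x)|$, the factor $I+\lambda^{-1}\mathbf{T}_1(x)$ is uniformly invertible in $x$, so the change of variable is bi-Lipschitz on $H^m(\R;\C^n)$ with constants independent of $\lambda$. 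Hence it is enough to estimate the new unknown $\mathbf{v}$, which solves
\begin{equation*}
\mathbf{v}_x+\bigl(\lambda\mathbf{D}_0+\mathbf{D}_1(x)+\lambda^{-1}\mathbf{R}(x,\lambda)\bigr)\mathbf{v}=\tilde{\mathbf{f}},
\end{equation*}
where $\mathbf{R}$ is uniformly bounded in $L^\infty$ together with its first $m$ $x$-derivatives, and $\tilde{\mathbf{f}}=(I+\lambda^{-1}\mathbf{T}_1)^{-1}\mathbf{T}_0^{-1}\mathbf{A}^{-1}\mathbf{f}$ satisfies $|\tilde{\mathbf{f}}|_{H^m}\leq C|\mathbf{f}|_{H^m}$ uniformly in $\lambda\in\tilde\Omega_R$.

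For the base $L^2$ estimate I would introduce the sign-weighted pairing $\langle\mathbf{u},\mathbf{v}\rangle_\epsilon:=\sum_{k=1}^n\epsilon_k\int_\R\overline{u^k}v^k\,dx$, with $\epsilon_k:=\sgn(\Real\mu_1^k)\in\{\pm 1\}$. By the hypothesis that $\mu_0^k$ and $\Real\mu_1^k$ share the same sign, one has $\epsilon_k\mu_0^k=|\mu_0^k|\geq 0$ and $\epsilon_k\Real\mu_1^k(x)=|\Real\mu_1^k(x)|\geq\delta$ uniformly in $x$, where $\delta>0$ is the number on the left of \eqref{mind1k}. Testing the transformed equation against $\mathbf{v}$ in this pairing and taking real parts, the first-order term drops out since $\Real\int\overline{v^k}v^k_x\,dx=\tfrac12\int(|v^k|^2)_x\,dx=0$; the $\lambda\mathbf{D}_0$ term contributes $\Real\lambda\sum_k|\mu_0^k|\,|v^k|^2_{L^2}\geq 0$ when $\Real\lambda\geq 0$; the $\mathbf{D}_1(x)$ term contributes at least $\delta|\mathbf{v}|^2_{L^2}$; and the $\lambda^{-1}\mathbf{R}$ contribution is bounded in absolute value by $C|\lambda|^{-1}|\mathbf{v}|^2_{L^2}$. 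Cauchy--Schwarz on the right-hand side then gives
\begin{equation*}
(\delta-C|\lambda|^{-1})|\mathbf{v}|^2_{L^2}\leq|\mathbf{v}|_{L^2}\,|\tilde{\mathbf{f}}|_{L^2},
\end{equation*}
so choosing $R\geq\max\{R_0,2C/\delta\}$ and absorbing yields $|\mathbf{v}|_{L^2}\leq(2/\delta)|\tilde{\mathbf{f}}|_{L^2}$ for $\lambda\in\tilde\Omega_R$, which is \eqref{genres} with $m=0$.

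The $H^m$ extension I would handle by induction, in the spirit of the proof of Proposition \ref{prop:resesti0}: differentiating the transformed equation $m$ times produces an equation of the same form for $\partial_x^m\mathbf{v}$ whose right-hand side is $\partial_x^m\tilde{\mathbf{f}}$ plus commutator terms involving $\partial_x^j\mathbf{D}_1$ and $\partial_x^j\mathbf{R}$ (for $1\leq j\leq m$) acting on lower-order derivatives of $\mathbf{v}$. These commutator terms are controlled by $|\mathbf{v}|_{H^{m-1}}$, which is bounded by the inductive hypothesis, and applying the base $L^2$ estimate to $\partial_x^m\mathbf{v}$ closes the loop. Transferring back to $\mathbf{w}$ via the bi-Lipschitz change of variable then finishes the argument with $M$ independent of $\lambda\in\tilde\Omega_R$.

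The main obstacle I anticipate is the simultaneous diagonalization of $\mathbf{D}_0$ and $\mathbf{D}_1$: solving $[\mathbf{D}_0,\mathbf{T}_1(x)]+\mathbf{T}_0^{-1}\mathbf{A}^{-1}\mathbf{C}(x)\mathbf{T}_0=\mathbf{D}_1(x)$ for an off-diagonal $\mathbf{T}_1$ and a diagonal $\mathbf{D}_1$ is straightforward when $\mathbf{A}^{-1}\mathbf{B}$ has simple eigenvalues but requires a block argument when eigenvalues are repeated; the relevant details are precisely those of Lemma 4.6 in \cite{MasZum02}, which I would quote. A secondary subtlety, which the weighted pairing is designed to overcome, is that the $\mu_0^k$ need not have a common sign across $k$, so the naive unweighted $L^2$ inner product would fail to make $\lambda\mathbf{D}_0+\mathbf{D}_1$ coercive; the correction by $\epsilon_k=\sgn(\Real\mu_1^k)$ is what unifies the contributions and forces both the $\Real\lambda$-part and the zeroth-order part of the energy identity to be nonnegative at the same time for all $\Real\lambda\geq 0$.
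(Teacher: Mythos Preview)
Your proposal is correct and follows essentially the same route as the paper: the approximate diagonalization via $\mathbf{T}_0(I+\lambda^{-1}\mathbf{T}_1)$, the $L^2$ energy estimate on the transformed system exploiting the common sign of $\mu_0^k$ and $\Real\mu_1^k$, and the inductive differentiation for $H^m$. The only cosmetic difference is that you package the componentwise estimate as a single sign-weighted pairing $\langle\cdot,\cdot\rangle_\epsilon$, whereas the paper multiplies each scalar equation by $\bar v_k$ separately and then uses the sign condition to pass to absolute values; the two are logically identical.
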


\begin{proof}
As in the proof of Proposition \ref{prop:resesti0}, we initially consider the $L^2$-case, i.e. $m=0$.
With the change of variable $\mathbf{v}=\mathbf{T}(x,\lambda)\mathbf{w}$ where $\mathbf{T}$ has been chosen following
the above procedure, we end up with a system for the unknown $\mathbf{v}$, whose $k-$th
component solves
\begin{equation*}
	\frac{dv_k}{dx}+\left(\lambda \mu_{0}^{k}+\mu_{1}^{k}(x)\right)v_k
		=o(1)v_k + \tilde{f}_k.
\end{equation*}
Taking the scalar product against $\bar v_k$, integrating in $\R$, and taking its real
part, we deduce
\begin{equation*}
	\int_{\R}\left(\Real \lambda\,\mu_{0}^{k}+\Real\,\mu_{1}^{k}(x)\right)|v_k|^2\,dx
		=\Real\int_{\R}(o(1)v_k+\tilde{f}_k)\bar v_k\,dx.
\end{equation*}
For any $\lambda$ with positive real part, since $\mu_{0}^{k}$ and $\Real\,\mu_{1}^{k}(x)$ 
have the same sign, we obtain
\begin{equation*}
	\int_{\R}\left(\Real\lambda\,|\mu_{0}^{k}|+|\Real\,\mu_{1}^{k}(x)|\right)|v_k|^2\,dx
		\leq o(1)|v|_{{L^2}}^2+C|\tilde{f}|_{{L^2}}^2
\end{equation*}
for some $C>0$ and then, as a consequence of \eqref{mind1k}, 
\begin{equation*}
	C_0|\mathbf{v}|_{{L^2}}^2\leq o(1)|\mathbf{v}|_{{}_{L^2}}^2+C|\tilde{\mathbf{f}}|_{{L^2}}^2
\end{equation*}
for some $C_0>0$.
For $|\lambda|$ sufficiently large, the term $o(1)$ can be controlled by $C_0$,
so that we end up with
\begin{equation*}
	|\mathbf{v}|_{{L^2}}^2\leq C|\tilde{\mathbf{f}}|_{{L^2}}^2.
\end{equation*}
Finally, since $C$ is uniformly bounded, also $\mathbf{T}$ and ${\mathbf{T}_1}$ are,
and the estimate can be brought back to the original variable $\mathbf{w}$.

The case of $m\geq 1$ follows by differentiating \eqref{resolvent} and proceeding
as in the proof of Proposition \ref{prop:resesti0}.
%\flushright\qed
\end{proof}

In the case at hand, system \eqref{resolvent} is two-dimensional and defined by
the matrices $\mathbf{A}$, ${\mathbf{B}}$ and $\mathbf{C}$ of \eqref{defABC}.
Then, the matrix 
\begin{equation*}
	\mathbf{A}^{-1}{\mathbf{B}}=\frac{1}{1-c^2\tau}\begin{pmatrix} -c\tau & \tau \\ 1 & -c\tau \end{pmatrix}
\end{equation*}
has real eigenvalues $\mu_0^\pm =\pm\sqrt{\tau}(1\mp c\sqrt{\tau})$ and 
\begin{equation*}
	{\mathbf{T}_0}^{-1}\mathbf{A}^{-1}{\mathbf{B}}{\mathbf{T}_0}=\mathbf{D}_0=\diag(\mu_0^-,\mu_0^+),
	\qquad\textrm{where}\quad
	{\mathbf{T}_0} =\begin{pmatrix} -\sqrt{\tau} & \sqrt{\tau} \\ 1 & 1 \end{pmatrix}.
\end{equation*}
Then, straightforward computations give
\begin{equation*}
	{\mathbf{T}_0}^{-1}\mathbf{A}^{-1}\mathbf{C}(x){\mathbf{T}_0}=\frac{1}{2\sqrt{\tau}}
	\begin{pmatrix} -(1-\tau a)/(1-c\sqrt{\tau}) & -(1+\tau a)/(1-c\sqrt{\tau}) \\ 
			(1+\tau a)/(1+c\sqrt{\tau}) & (1-\tau a)/(1+c\sqrt{\tau}) \end{pmatrix},
\end{equation*}
(where $a=a(x)$) so that
\begin{equation*}
 {\mathbf{D}_1}=\diag(\mu_1^-,\mu_1^+)
 	=\frac{1}{2\sqrt{\tau}}\diag\left(-\frac{1-\tau a}{1-c\sqrt{\tau}},\frac{1-\tau a}{1+c\sqrt{\tau}}\right).
\end{equation*}
Thus, if the function $a=a(x)$ is such that $\sup a<{1}/{\tau}$,
then assumption of Proposition \ref{prop:resesti1} holds and the resolvent estimate
\eqref{genres} holds in $\tilde\Omega_R:=\{|\lambda|\geq R,\,\Real\,\lambda\geq 0\}$
for $R$ sufficiently large.

\subsubsection{Proof of Theorem \ref{thm:spectrum}}

In view of Proposition \ref{prop:resesti1}, take $R > 0$ sufficiently large
so that $\ptsp \cap \Omega \subset \{|\lambda| \leq R\}$, and consider the following compact subset of $\Omega$:
\begin{equation*}
 \Omega_R := \{\lambda \in \C \, : \, |\lambda| \leq R, \, \Real \, \lambda \geq - \tfrac{1}{2} \chi_0\}.
\end{equation*}
By Lemma \ref{lemapprox}, systems \eqref{firstorders} satisfy the hypotheses of  \cite[Proposition 2.4]{PZ1}.
Hence, for each $\tau \in \mathcal{V}$ and in a $\Omega_R$-neighborhood of $\lambda$, the local Evans
functions $D^\tau(\lambda)$ converge uniformly to $D^0(\lambda)$ in a (possible smaller) neighborhood
of $\lambda$ as $\tau \to 0^+$ with rate $|D^\tau(\cdot) - D^0(\cdot)| = O(\eta(\tau)) = O(\tau + |\zeta(\tau)|) \to 0$.

By point spectral stability for $\tau = 0$ (Corollary \ref{corD0}), and by analiticity and uniform convergence, we conclude that $D^\tau(\lambda) \neq 0$ for $\lambda \in \Omega_R$, $\Real \, \lambda \geq 0$, except only at $\lambda = 0$, and for each $0 \leq  \tau \ll 1$ sufficiently small. Hence there exists $\tau_0 \in (0,\tau_m)$ such that point spectral stability holds for each $\tau \in (0,\tau_0)$. Finally, noticing that $\Omega_R$ contains only isolated eigenvalues with finite multiplicity, and by continuous dependence of eigenvalues of Fredholm operators in Banach spaces with respect to its coefficients (cf. \cite{MoZe96}), the eigenvalues $\lambda = \lambda(\tau) \in \ptsp \cap \Omega_R$ are continuous functions of $\tau \in (0,\tau_m)$. In view of Lemma \ref{lemma:imagevalue}, such eigenvalues may cross the imaginary axis towards the unstable half plane only through the origin. But $\lambda = 0 \in \ptsp$ is a simple eigenvalue for each $\tau \in (0,\tau_m)$ as proved in Lemma \ref{lemam1}. Hence all point spectrum remains in the stable half plane $\Real \, \lambda < 0$ for all $\tau \in (0,\tau_m)$. This proves the Theorem.

\section{Decaying semigroup and nonlinear stability}\label{sect:linnonlinstab}

In this Section, we establish the conditions for the generation of a $C_0$-semi\-group of solutions operators for
the linearization around the wave, as well as for its asymptotic decaying properties.
We also present the proof of nonlinear (orbital) stability, which uses such information in a key way. 

\subsection{Generation of the semigroup}

We are now ready to establish that each operator $\mathcal{L}^\tau$ generates a $C_0$ semigroup in $L^2(\R;\C^2)$.

\begin{lemma}
 \label{lemgenC0}
For each $\tau \in (0,\tau_m)$, the operator $\mathcal{L}^\tau : D = H^2(\R;\C^2) \to L^2(\R;\C^2)$ is the
infinitesimal generator of a $C_0$-semigroup, $\{{\mathcal{S}(t)}\}_{t\geq 0}$, satisfying
\begin{equation}
 \label{quasicontr}
\|{\mathcal{S}(t)}\| \leq e^{\omega t},
\end{equation}
for some $\omega = \omega(\tau) \in \R$, all $t \geq 0$.
\end{lemma}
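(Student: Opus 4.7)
The plan is to apply the bounded perturbation theorem for $C_0$-semigroups after endowing $L^2(\R;\C^2)$ with an equivalent inner product tailored to the symmetric structure of the principal part of $\mathcal{L}^\tau$, thereby exhibiting the operator as a bounded zeroth-order perturbation of a skew-adjoint operator.

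First I would introduce on $L^2(\R;\C^2)$ the weighted inner product
\[
	\langle \mathbf{u},\mathbf{v}\rangle_\mathbf{B} := \int_\R \mathbf{v}^*\mathbf{B}\mathbf{u}\,dx,
\]
which is equivalent to the standard $L^2$ scalar product because $\mathbf{B}=\diag(1,\tau)$ is symmetric and positive definite for $\tau \in (0,\tau_m)$. Then I would split
\[
	\mathcal{L}^\tau = \mathcal{A}_0 + \mathcal{A}_1,\qquad
	\mathcal{A}_0 := -\mathbf{B}^{-1}\mathbf{A}\,\partial_x,\qquad
	\mathcal{A}_1 := -\mathbf{B}^{-1}\mathbf{C}(\cdot),
\]
isolating the principal and zeroth-order parts. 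The zeroth-order term $\mathcal{A}_1$ is a bounded multiplication operator on $L^2$ because $\mathbf{B}^{-1}$ is constant and $a(x)=f'(U(x))$ is uniformly bounded (as $f\in C^3$ and $U$ takes values in $[0,1]$); hence $\|\mathcal{A}_1\|_\mathbf{B} \leq M(\tau)$ for some explicit constant.

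The core step is to show that $\mathcal{A}_0$, taken on its maximal domain $H^1(\R;\C^2)$, is skew-adjoint with respect to $\langle\cdot,\cdot\rangle_\mathbf{B}$. Formally, since $\mathbf{A}$ and $\mathbf{B}$ are both real symmetric, integration by parts gives, for $\mathbf{u},\mathbf{v}\in H^1$,
\[
	\langle \mathcal{A}_0 \mathbf{u},\mathbf{v}\rangle_\mathbf{B}
		= -\int_\R \mathbf{v}^*\mathbf{A}\mathbf{u}_x\,dx
		= \int_\R (\mathbf{v}_x)^*\mathbf{A}\mathbf{u}\,dx
		= -\langle \mathbf{u},\mathcal{A}_0\mathbf{v}\rangle_\mathbf{B},
\]
so $\mathcal{A}_0$ is $\mathbf{B}$-skew-symmetric. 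Essential skew-adjointness on $H^1$ follows from a Fourier transform argument: the symbol $-i\xi\mathbf{B}^{-1}\mathbf{A}$ is $\mathbf{B}$-skew-Hermitian, since $\mathbf{B}^{-1}\mathbf{A}$ is $\mathbf{B}$-self-adjoint with real eigenvalues $-c\pm 1/\sqrt{\tau}$ (the characteristic speeds of the hyperbolic principal part). By Stone's theorem, $\mathcal{A}_0$ generates a unitary $C_0$-group on $(L^2,\langle\cdot,\cdot\rangle_\mathbf{B})$.

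Combining the two pieces, the bounded perturbation theorem for infinitesimal generators (cf. Pazy, Thm. 3.1.1) yields that $\mathcal{L}^\tau=\mathcal{A}_0+\mathcal{A}_1$ is the infinitesimal generator of a $C_0$-semigroup $\{\mathcal{S}(t)\}_{t\geq 0}$ satisfying
\[
	\|\mathcal{S}(t)\|_\mathbf{B} \leq e^{\|\mathcal{A}_1\|_\mathbf{B} t} = e^{\omega t},
	\qquad \omega := M(\tau),
\]
and the equivalence of $\|\cdot\|_\mathbf{B}$ with the standard $\|\cdot\|_{L^2}$ transfers the quasi-contraction bound to the original norm. The main subtle point is justifying, rather than merely checking formally, the skew-adjointness of $\mathcal{A}_0$ on $H^1$; should one wish to bypass this, an alternative is to verify the Lumer--Phillips conditions directly, using the dissipativity inequality $\Real\langle(\mathcal{L}^\tau-\omega I)\mathbf{w},\mathbf{w}\rangle_\mathbf{B}\leq 0$ (which follows from the same integration-by-parts calculation above together with $\|\mathcal{A}_1\|_\mathbf{B}\leq M$) together with the range condition for $\lambda I-\mathcal{L}^\tau$ granted by Proposition \ref{prop:resesti0} for $\Real\,\lambda$ sufficiently large.
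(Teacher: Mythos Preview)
Your approach is correct and differs from the paper's. The paper argues directly via Lumer--Phillips in the standard $L^2$ inner product: it claims the dissipativity bound $\Real\langle\mathbf{u},\mathcal{L}^\tau\mathbf{u}\rangle_{L^2}\leq\omega|\mathbf{u}|_{L^2}^2$ and then obtains the range condition from the resolvent estimate of Proposition~\ref{prop:resesti0}. You instead introduce the $\mathbf{B}$-weighted inner product, exhibit the principal part $-\mathbf{B}^{-1}\mathbf{A}\partial_x$ as skew-adjoint there (which is exactly where the symmetry of $\mathbf{A}$ enters cleanly, since $\mathbf{B}^{-1}\mathbf{A}$ is \emph{not} symmetric in the standard product), invoke Stone's theorem, and then add the bounded multiplication operator $-\mathbf{B}^{-1}\mathbf{C}(\cdot)$ via the bounded perturbation theorem. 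Your route makes the vanishing of the transport contribution transparent, and your closing ``alternative''---Lumer--Phillips in the $\mathbf{B}$-norm together with Proposition~\ref{prop:resesti0} for the range condition---is essentially the paper's argument placed in the natural inner product. One small caveat: transferring $\|\mathcal{S}(t)\|_\mathbf{B}\leq e^{\omega t}$ back to the standard $L^2$ norm via the equivalence constants produces $\|\mathcal{S}(t)\|\leq Ce^{\omega t}$ with $C=\sqrt{\max(1,\tau)/\min(1,\tau)}$, so the literal constant-$1$ bound \eqref{quasicontr} holds in the weighted norm rather than the standard one; this is harmless for everything the paper does with the lemma.
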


Here $\| \cdot \|$ denotes the operator norm.

\begin{proof}
First, we note that the domain $D = H^2(\R;\C^2)$ is dense in $L^2(\R;\C^2)$. Now, for each $\mathbf{u} = (u,v)^\top \in D$:
\begin{equation*}
 \begin{aligned}
  \Real \, \langle {\mathbf{u}}, \mathcal{L}^\tau {\mathbf{u}}\rangle_{L^2} &= - \Real \, \langle {\mathbf{u}}, {\mathbf{B}}^{-1}( \mathbf{A} {\mathbf{u}_x} + \mathbf{C}(x){\mathbf{u}}) \rangle_{L^2}\\
&= - \Real \, \langle {\mathbf{u}}, {\mathbf{B}}^{-1} \mathbf{A} {\mathbf{u}_x}\rangle_{L^2} - \Real \, \langle {\mathbf{u}}, {\mathbf{B}}^{-1}\mathbf{C}(x) {\mathbf{u}} \rangle_{L^2}\\
&= - \int_\R a(x) |u|^2 \, dx + \tau^{-1} |v|^2_{L^2}\\
&\leq \sup_\R |a(x)| |u|^2_{L^2} + \tau^{-1} |v|^2_{L^2} \leq \omega |{\mathbf{u}}|_{L^2}^2,
 \end{aligned}
\end{equation*}
with  $\omega = \max \{\sup |a(x)|, \tau^{-1}\} > 0$.

Now, thanks to the resolvent estimate \eqref{genres} for any $|\lambda| \geq R$, $\Real \, \lambda \geq 0$ with $R$ sufficiently large, there are no $L^2$ solutions to $\mathcal{L}^\tau {\mathbf{u}} = \lambda_0 {\mathbf{u}}$ for $\lambda_0 \in \R$, $\lambda_0 > \omega$ and sufficiently large. Thus, for each $\lambda_0 > \omega$ sufficiently large $\mathcal{L} -  \lambda_0$ is onto. A direct application of the classical Hille-Yosida theorem \cite{EN00,Pa83} yields the result together with the estimate \eqref{quasicontr}.
%\flushright\qed
\end{proof}

As a consequence of the semigroup properties we have that 
\begin{equation*}
 \frac{d}{dt} ({\mathcal{S}(t)}{\mathbf{u}}) = {\mathcal{S}(t)} \mathcal{L}^\tau {\mathbf{u}} = \mathcal{L}^\tau {\mathcal{S}(t)}{\mathbf{u}},
\end{equation*}
for all ${\mathbf{u}} = (u,v)^\top \in D = H^2(\R;\C^2)$.

Naturally, the growth rate $\omega$ of estimate \eqref{quasicontr} is not optimal. Actually, $\omega \to +\infty$ as $\tau \to 0^+$, due to the fact that, in the limit, the operator $\mathcal{L}^\tau$ is not defined and becomes singular. The optimal growth rate in the appropriate subspace will be provided by spectral stability. The significance of Lemma \ref{lemgenC0} is simply that, for each fixed $\tau \in (0,\tau_m)$, the operator $\mathcal{L}^\tau$ is the generator of a $C_0$-semigroup. Let us recall the growth bound for a semigroup ${\mathcal{S}(t)}$:
\begin{equation*}
 \omega_0 = \inf \{\omega \in \R \, : \, \lim_{t \to + \infty} e^{-\omega t}\|{\mathcal{S}(t)}\| \; \text{exists}\}.
\end{equation*}
We say a semigroup is uniformly (exponentially) stable whenever $\omega_0 < 0$. Let $\cL$ be the infinitesimal generator of the semigroup ${\mathcal{S}(t)}$. Its spectral bound is defined as 
\begin{equation*}
 s(\cL) = \sup \{ \Real\, \lambda \, : \, \lambda \in \sigma(\cL)\}.
\end{equation*}
Since the spectral mapping theorem
--- namely, that, $\sigma({\mathcal{S}(t)}) \backslash \{0\} = e^{t\sigma(\cL)}$ ---
is \textit{not} true in general for $C_0$-semi\-groups (cf. \cite{EN00}), for stability purposes
we rely on the Gearhart-Pr\"uss theorem \cite{Gea78,Pr84}, which restricts our attention
to semigroups on Hilbert spaces (see also \cite{CrL,EN00}).
It states that any $C_0$-semigroup $\{{\mathcal{S}(t)}\}_{t\geq 0}$ on a Hilbert space $H$ is uniformly
exponentially stable if and only if its generator satisfies $s(\cL) <0$, and the following resolvent estimate holds:
\begin{equation*}
\sup_{\Real\, \lambda > 0} \|(\cL - \lambda)^{-1}\| < + \infty. 
\end{equation*}
This task is already substantially completed thanks to the general resolvent estimates of the previous 
section. It remains to be shown that the estimate
holds inside a half circle, with large radius, and on the projected
space, which is the content of the proof of Proposition \ref{propues} below.

It is known (see Kato \cite[Remark 6.23, p.184]{Kat80}) that if $\lambda \in \C$ is an eigenvalue of a closed 
operator $\cL : D \subset H \to H$ then $\overline{\lambda}$ is an eigenvalue of $\cL^*$ (formal adjoint) with 
the same geometric and algebraic multiplicities. Also, since $H^2$ and $L^2$ are reflexive Hilbert spaces, 
$\cL : D = H^2 \to L^2$ has a formal adjoint which is also densely defined and closed. Moreover, $\cL^{**} = 
\cL$ (cf. \cite[Theorem 5.29, p.168]{Kat80}).
Upon these observations we immediately have the following
\begin{lemma}
 \label{lempre1}
$\lambda = 0$ is an isolated, simple eigenvalue of the formal adjoint
\begin{equation*}
 (\mathcal{L}^\tau)^* : D(\mathcal{L}^\tau) = H^2(\R;\C^2) \to L^2(\R;\C^2),
\end{equation*}
and there exists an eigenfunction $(\Psi,\Phi)^\top \in D(\mathcal{L}^\tau)$ such that $(\mathcal{L}^\tau)^* (\Psi,\Phi)^\top = 0$.
\end{lemma}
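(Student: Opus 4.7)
The plan is to deduce the three assertions --- that $\lambda=0$ is an eigenvalue of $(\mathcal{L}^\tau)^*$, that it is isolated, and that it is simple --- as direct corollaries of the preceding spectral results for $\mathcal{L}^\tau$ together with the abstract Hilbert-space duality facts recalled just before the statement. Since $\mathcal{L}^\tau$ is closed and densely defined on the Hilbert space $L^2(\R;\C^2)$, the formal adjoint $(\mathcal{L}^\tau)^*$ is also closed and densely defined, and $(\mathcal{L}^\tau)^{**}=\mathcal{L}^\tau$. Moreover, the standard identity $\sigma((\mathcal{L}^\tau)^*)=\overline{\sigma(\mathcal{L}^\tau)}$ holds for such operators on reflexive spaces.

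First, I would invoke Theorem \ref{thm:spectrum} to obtain the decomposition $\sigma(\mathcal{L}^\tau)=\{0\}\cup \sigma_-^{(\tau)}$ with $\sigma_-^{(\tau)}\subset\{\Real\lambda\leq -\omega_0(\tau)\}$, so that upon complex conjugation one gets $\sigma((\mathcal{L}^\tau)^*)=\{0\}\cup\overline{\sigma_-^{(\tau)}}$, still satisfying the same spectral gap. This immediately places $\lambda=0$ as an isolated point of $\sigma((\mathcal{L}^\tau)^*)$. Next, applying the Kato duality result \cite[Remark 6.23, p.~184]{Kat80} cited just above the lemma --- which asserts that $\overline{\lambda}\in\ptsp((\mathcal{L}^\tau)^*)$ with the same geometric and algebraic multiplicities as $\lambda\in\ptsp(\mathcal{L}^\tau)$ --- and using that these multiplicities are both equal to one at $\lambda=0$ (by Lemmas \ref{lemgm1} and \ref{lemam1}), I conclude that $\lambda=0=\overline{0}$ is a simple eigenvalue of $(\mathcal{L}^\tau)^*$, with a one-dimensional kernel generated by some nonzero $(\Psi,\Phi)^\top\in D((\mathcal{L}^\tau)^*)=H^2(\R;\C^2)$.

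If a concrete realization of $(\Psi,\Phi)^\top$ is needed in the sequel, I would note that the adjoint first-order equation \eqref{adjeq} has already been shown to admit a unique (up to scalar) bounded solution $\mathbf{y}_0=(\zeta,\eta)^\top\in H^1(\R;\C^2)$ with $\eta\in H^2$, and an adjoint version of the spectral Kac transformation \eqref{specKac} recovers $(\Psi,\Phi)^\top$ from $\mathbf{y}_0$, confirming that the eigenfunction lies in $H^2(\R;\C^2)$ as claimed.

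No essential difficulty is expected: the lemma is a purely abstract consequence of the spectral information already assembled in Lemmas \ref{lemgm1}, \ref{lemam1} and Theorem \ref{thm:spectrum}, combined with standard duality in Hilbert space. The only minor point to watch is to make sure the domain of $(\mathcal{L}^\tau)^*$ is indeed $H^2(\R;\C^2)$; this follows because $\mathcal{L}^\tau$ is a first-order differential operator with bounded coefficients, so its formal $L^2$-adjoint is also first-order with bounded coefficients and shares the same natural Sobolev domain, consistent with the statement of the lemma.
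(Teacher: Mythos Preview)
Your proposal is correct and follows essentially the same approach as the paper. The paper treats the lemma as an immediate consequence of the duality observations from Kato cited just before the statement, combined with the already-established spectral structure of $\mathcal{L}^\tau$; your write-up simply spells out these steps (invoking Theorem \ref{thm:spectrum} and Lemmas \ref{lemgm1}, \ref{lemam1}) in slightly more detail, and adds a harmless remark about a concrete realization via the adjoint first-order system.
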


Let us denote the inner product:
\begin{equation*}
 \Theta := \langle (U_x, V_x), (\Psi,\Phi) \rangle_{L^2} = \int_{-\infty}^{+\infty} \begin{pmatrix}
                                                                                     U_x \\ V_x
                                                                                    \end{pmatrix}^* \begin{pmatrix}
                                                                                     \Psi \\ \Phi
                                                                                    \end{pmatrix} \, dx.
\end{equation*}
It is not hard to see that $\Theta \neq 0$. Indeed, suppose by contradiction that $\Theta = 0$. Then $(\Psi,\Phi)^\top \in (\ker {\mathcal{L}^\tau})^\perp = {\mathrm{range}}({\mathcal{L}^\tau}^*)$. Hence, there exists $0 \neq (u,v)^\top \in D(\mathcal{L}^\tau) = H^2$ such that $(\mathcal{L}^\tau)^*(u,v)^\top = (\Psi,\Phi)^\top$. Thus, $((\mathcal{L}^\tau)^*)^2(u,v)^\top = 0$, which is a contradiction with $\lambda = 0$ being a simple eigenvalue of $(\mathcal{L}^\tau)^*$. Thus, we may define the Hilbert space $\tilde X \subset L^2(\R;\C^2)$ as the range of the spectral projection,
\begin{equation*}
 {\mathcal{P}}\begin{pmatrix}
               u \\ v
              \end{pmatrix}:= \begin{pmatrix}
               u \\ v
              \end{pmatrix} - \Theta^{-1}\langle (u,v),(\Psi,\Phi)\rangle_{L^2}\,\begin{pmatrix}
               U_x \\ V_x
              \end{pmatrix}.
\end{equation*}
In this fashion we project out the eigenspace spanned by the single eigenfunction $(U_x, V_x)^\top$. Outside 
this eigenspace, the semigroup decays exponentially, as we shall see next.

\subsection{Linear decay rates}

We now observe that on a reflexive Banach space, weak and weak$^*$ topologies coincide, and therefore the family of dual operators $\{ {\mathcal{S}(t)}^*\}_{t\geq 0}$, consisting of all the formal adjoints in $L^2$ is a $C_0$-semigroup as well (cf. \cite[p.44]{EN00}). Moreover, the infinitesimal generator of this semigroup is simply $(\mathcal{L}^\tau)^*$ (see  \cite[Corollary 10.6]{Pa83}). By semigroup properties we readily have
\begin{equation*}
 {\mathcal{S}(t)} \begin{pmatrix}
               U_x \\ V_x
              \end{pmatrix} = \begin{pmatrix}
               U_x \\ V_x
              \end{pmatrix}, \qquad {\mathcal{S}(t)}^* \begin{pmatrix}
                     \Psi \\ \Phi
                    \end{pmatrix}
 = \begin{pmatrix}
                     \Psi \\ \Phi
                    \end{pmatrix}.
\end{equation*}
As a result of these properties and the definition of the projector it is easy to verify that
\begin{equation*}
 {\mathcal{S}(t)} {\mathcal{P}} = {\mathcal{P}} {\mathcal{S}(t)}.
\end{equation*}
Hence $\tilde X$ is an ${\mathcal{S}(t)}$-invariant closed (Hilbert) subspace of $H^2(\R;\C^2)$. So we define the domain
\begin{equation*}
 \tilde{D} := \{ \mathbf{u} \in D \cap \tilde X \, : \, \mathcal{L}^\tau \mathbf{u} \in \tilde X \}
\end{equation*}
and the operator
\begin{equation*}
 \widetilde{\mathcal{L}^\tau} : \tilde D \subset \tilde X \to \tilde X,
\end{equation*}
\begin{equation*}
 \widetilde{\mathcal{L}^\tau} \mathbf{u} := \mathcal{L}^\tau \mathbf{u}, \qquad \mathbf{u} \in \tilde D,
\end{equation*}
as the restriction of $\mathcal{L}^\tau$ on $\tilde X$. Therefore, $\widetilde{\mathcal{L}^\tau}$ is a closed, densely defined operator on the Hilbert space $\tilde X$. Moreover, we observe that $0 \neq (U_x,V_x)^\top \in \ker {\mathcal{P}}$. Hence, $\lambda = 0 \notin \ptsp(\widetilde{\mathcal{L}^\tau})$. As a consequence of point spectral stability of $\mathcal{L}^\tau$ we readily obtain 
\begin{equation*}
 \sigma( \widetilde{\mathcal{L}^\tau} ) \subset \{ \lambda \in \C \, : \, \Real \, \lambda < 0 \},
\end{equation*}
and hence the spectral bound of $\widetilde{\mathcal{L}^\tau}$ is strictly negative,
$s(\widetilde{\mathcal{L}^\tau}) < 0$. By the above observations, we obtain the following
\begin{lemma}
The family of operators $\{{\widetilde{\mathcal{S}}(t)}\}_{t\geq 0}$, ${\widetilde{\mathcal{S}}(t)} : \tilde X \to \tilde X$, defined as 
\begin{equation*}
 {\widetilde{\mathcal{S}}(t)} \mathbf{u} := {\mathcal{S}(t)} {\mathcal{P}} \mathbf{u}, \qquad \mathbf{u} \in \tilde X, \;\; t \geq 0,
\end{equation*}
is a $C_0$-semigroup in the Hilbert space $\tilde X$ with infinitesimal generator $\widetilde{\mathcal{L}^\tau}$.
\end{lemma}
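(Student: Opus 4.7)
The plan is to verify the three defining properties of a $C_0$-semigroup directly on $\tilde X$ and then to identify the infinitesimal generator as $\widetilde{\mathcal{L}^\tau}$ through the standard structure result for restrictions of semigroups to closed invariant subspaces. The whole argument rests on the commutation relation ${\mathcal{S}(t)} {\mathcal{P}} = {\mathcal{P}} {\mathcal{S}(t)}$ established just before the statement, together with the fact that ${\mathcal{P}}$ is a bounded projection whose range $\tilde X$ is automatically closed, hence a Hilbert space in the inherited $L^2$ inner product.

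First I would verify that ${\widetilde{\mathcal{S}}(t)}$ maps $\tilde X$ into itself: for $\mathbf{u} \in \tilde X$ we have ${\mathcal{P}}\mathbf{u} = \mathbf{u}$, and the commutation gives ${\widetilde{\mathcal{S}}(t)}\mathbf{u} = {\mathcal{S}(t)}\mathbf{u} = {\mathcal{P}}{\mathcal{S}(t)}\mathbf{u} \in \tilde X$. The semigroup identity ${\widetilde{\mathcal{S}}(t+s)} = {\widetilde{\mathcal{S}}(t)}{\widetilde{\mathcal{S}}(s)}$ then reduces to a short composition using ${\mathcal{P}}^2 = {\mathcal{P}}$ and the commutation relation, while ${\widetilde{\mathcal{S}}(0)}\mathbf{u} = {\mathcal{P}}\mathbf{u} = \mathbf{u}$ is immediate. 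Strong continuity at $t = 0^+$ for every $\mathbf{u} \in \tilde X$ is inherited directly from that of ${\mathcal{S}(t)}$ on $L^2(\R;\C^2)$ established in Lemma \ref{lemgenC0}, because ${\widetilde{\mathcal{S}}(t)}\mathbf{u} = {\mathcal{S}(t)}\mathbf{u}$ whenever $\mathbf{u} \in \tilde X$.

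To identify the generator $A$ of $\{{\widetilde{\mathcal{S}}(t)}\}_{t \geq 0}$, I would prove the two-way inclusion between its domain and $\tilde D$. For $\mathbf{u} \in \tilde D \subset D(\mathcal{L}^\tau)$, the difference quotient $h^{-1}({\widetilde{\mathcal{S}}(h)}\mathbf{u} - \mathbf{u})$ coincides with $h^{-1}({\mathcal{S}(h)}\mathbf{u} - \mathbf{u})$ and converges as $h \to 0^+$ to $\mathcal{L}^\tau \mathbf{u} = \widetilde{\mathcal{L}^\tau}\mathbf{u} \in \tilde X$; conversely, if that limit exists in $\tilde X \subset L^2(\R;\C^2)$, then $\mathbf{u}$ lies in $D(\mathcal{L}^\tau)$ with $A\mathbf{u} = \mathcal{L}^\tau \mathbf{u}$, and the requirement $A\mathbf{u} \in \tilde X$ forces $\mathbf{u} \in \tilde D$.

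The one point that I expect to need genuine (if modest) care --- the main obstacle --- is verifying that $\tilde D$ is actually dense in $\tilde X$, so that $\widetilde{\mathcal{L}^\tau}$ really is densely defined. For this, I would observe that ${\mathcal{P}}$ maps the dense subspace $D = H^2(\R;\C^2)$ into $\tilde D$: the eigenfunction $(U_x,V_x)^\top$ belongs to $H^2$ by the exponential decay \eqref{expodecay} and the bootstrapping observation after it, so ${\mathcal{P}}\mathbf{u} \in D \cap \tilde X$ for every $\mathbf{u} \in D$; differentiating the commutation ${\mathcal{S}(t)}{\mathcal{P}} = {\mathcal{P}}{\mathcal{S}(t)}$ at $t = 0$ on $D$ then gives $\mathcal{L}^\tau {\mathcal{P}}\mathbf{u} = {\mathcal{P}}\mathcal{L}^\tau \mathbf{u} \in \tilde X$, hence ${\mathcal{P}}\mathbf{u} \in \tilde D$. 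Since ${\mathcal{P}}D$ is dense in ${\mathcal{P}}L^2 = \tilde X$ by boundedness of ${\mathcal{P}}$, density of $\tilde D$ in $\tilde X$ follows, completing the plan.
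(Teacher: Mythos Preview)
Your proposal is correct and follows essentially the same approach as the paper, which merely says the semigroup properties are inherited from $\{\mathcal{S}(t)\}$ and then cites the standard restriction result (Engel--Nagel, Section~2.2, p.~61) for the identification of the generator; you are simply unpacking that reference. One small remark: your ``main obstacle'' of density of $\tilde D$ in $\tilde X$ is in fact automatic once you have shown $\{\widetilde{\mathcal{S}}(t)\}$ is a $C_0$-semigroup and identified its generator's domain as $\tilde D$, since the generator of any $C_0$-semigroup is densely defined --- but your direct verification via $\mathcal{P}D \subset \tilde D$ is correct and does no harm.
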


\begin{proof}
 The semigroup properties are inherited from those of ${\mathcal{S}(t)}$ in $L^2(\R;\C^2)$.
 That $\widetilde{\mathcal{L}^\tau}$ is the infinitesimal generator follows from Corollary in Section 2.2 of \cite{EN00}, p.61.
% \flushright\qed
 \end{proof}

Finally, we apply Gearhart-Pr\"uss theorem.

\begin{proposition}[Uniform exponential stability]
\label{propues}
For each $\tau \in (0,\tau_m)$ there exist constants $C \geq 1$ and $\theta > 0$ such that
\begin{equation}\label{decayest}
	|{\widetilde{\mathcal{S}}(t)} {\mathbf{u}}|_{L^2}
	\leq C e^{-\theta t}|{\mathbf{u}}|_{L^2}, \qquad {\mathbf{u}} \in \tilde X, \;\; t \geq 0. 
\end{equation}
\end{proposition}

\begin{proof}
In view of the resolvent estimates \eqref{genres}, we can find a radius sufficiently large such that,
if $|\lambda| \geq R$ and $\Real \, \lambda \geq 0$, then
\begin{equation*}
	 \| (\mathcal{L}^\tau - \lambda)^{-1} \|_{L^2 \to L^2} \leq C
\end{equation*}
for some uniform $C > 0$.
Since $\widetilde{\mathcal{L}^\tau} = \mathcal{L}^\tau$ on the subspace $\tilde X \subset L^2(\R;\C^2)$,
the same estimate applies to $\widetilde{\mathcal{L}^\tau}$ outside that half circle. Inside, however,
thanks to (strict) point spectral  stability of the operator restricted to $\tilde X$,
the resolvent of $\widetilde{\mathcal{L}^\tau}$ is uniformly bounded inside the intersection
of any ball of finite radius and $\Real \, \lambda \geq 0$. 
We conclude that
\begin{equation*}
	\sup_{\Real \, \lambda > 0} \| (\widetilde{\mathcal{L}^\tau}- \lambda)^{-1}\|_{\tilde X \to \tilde X} \leq C,
\end{equation*}
for some $C > 0$ independent of $\lambda$. In addition, $s(\widetilde{\mathcal{L}^\tau}) < 0$.
Thus, a direct application of Gearhart-Pr\"uss theorem to the operator $\widetilde{\mathcal{L}^\tau}$
on the Hilbert space $\tilde X$ implies that the semigroup ${\widetilde{\mathcal{S}}(t)}$ is uniformly
exponentially stable, and that estimate \eqref{decayest} holds for some $C \geq 1$ and some $\theta > 0$.
%\flushright\qed
\end{proof}

This result establishes the decaying properties of the linearized operator around the wave, that is, linear stability.
The latter can be summarized in the following

\begin{theorem}[Linear stability]\label{thm:linstab}
There exists a projection operator ${\mathcal{Q}} = I- {\mathcal{P}}$ with one-dimensional range
$\mathrm{span}\{(U_x, V_x)^\top\} \subset L^2(\R;\C^2)$ such that for any $t>0$
\begin{equation*}
	{\mathcal{S}(t)}{\mathcal{Q}}={\mathcal{Q}} {\mathcal{S}(t)}={\mathcal{Q}}
	\qquad\textrm{and}\qquad
	\|{\mathcal{S}(t)}(I-{\mathcal{Q}})\|\leq C\,e^{-\theta t}
\end{equation*}
for some $C, \theta>0$.
\end{theorem}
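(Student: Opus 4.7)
The plan is to read off the theorem directly from the machinery built in the preceding subsections; in a sense, every ingredient is already assembled, and this final statement is just a repackaging in terms of the rank-one projection $\mathcal{Q} := I - \mathcal{P}$. First I would observe that by construction, for every $\mathbf{u} \in L^2(\R;\C^2)$,
\begin{equation*}
    \mathcal{Q}\mathbf{u} = \Theta^{-1}\langle \mathbf{u},(\Psi,\Phi)\rangle_{L^2}\,(U_x,V_x)^\top,
\end{equation*}
so the range of $\mathcal{Q}$ is one-dimensional and spanned by $(U_x,V_x)^\top$. Since the latter lies in $D(\mathcal{L}^\tau) = H^2(\R;\C^2)$ and belongs to $\ker \mathcal{L}^\tau$ by Lemma \ref{lemgm1}, it is a stationary solution of the abstract Cauchy problem $\partial_t \mathbf{w} = \mathcal{L}^\tau \mathbf{w}$, hence $\mathcal{S}(t)(U_x,V_x)^\top = (U_x,V_x)^\top$ for all $t \geq 0$. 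By linearity this immediately gives $\mathcal{S}(t)\mathcal{Q}\mathbf{u} = \mathcal{Q}\mathbf{u}$, i.e.\ $\mathcal{S}(t)\mathcal{Q} = \mathcal{Q}$.

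Next I would establish the commutation $\mathcal{Q}\mathcal{S}(t) = \mathcal{S}(t)\mathcal{Q}$ (which then yields $\mathcal{Q}\mathcal{S}(t) = \mathcal{Q}$ as well). Since $\mathcal{Q} = I - \mathcal{P}$, this reduces to $\mathcal{S}(t)\mathcal{P} = \mathcal{P}\mathcal{S}(t)$, a relation already recorded in the previous subsection as a consequence of the fact that both $(U_x,V_x)^\top$ (generating $\ker \mathcal{L}^\tau$) and $(\Psi,\Phi)^\top$ (generating $\ker (\mathcal{L}^\tau)^*$) are fixed points of $\mathcal{S}(t)$ and $\mathcal{S}(t)^*$, respectively. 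A quick direct check from the explicit formula for $\mathcal{P}$ and the identity $\langle \mathcal{S}(t)\mathbf{u},(\Psi,\Phi)\rangle_{L^2} = \langle \mathbf{u}, \mathcal{S}(t)^*(\Psi,\Phi)\rangle_{L^2} = \langle \mathbf{u},(\Psi,\Phi)\rangle_{L^2}$ closes this step.

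Finally, for the decay of $\mathcal{S}(t)(I-\mathcal{Q}) = \mathcal{S}(t)\mathcal{P}$ I would invoke directly the uniform exponential stability proposition for the restricted semigroup $\widetilde{\mathcal{S}}(t)$ on the Hilbert space $\tilde X = \mathrm{range}\,\mathcal{P}$, which provides constants $C \geq 1$, $\theta > 0$ with $|\widetilde{\mathcal{S}}(t)\mathbf{u}|_{L^2} \leq C e^{-\theta t}|\mathbf{u}|_{L^2}$ for all $\mathbf{u} \in \tilde X$. For an arbitrary $\mathbf{u} \in L^2(\R;\C^2)$ one then has $\mathcal{P}\mathbf{u} \in \tilde X$ and
\begin{equation*}
    |\mathcal{S}(t)(I-\mathcal{Q})\mathbf{u}|_{L^2} = |\mathcal{S}(t)\mathcal{P}\mathbf{u}|_{L^2} = |\widetilde{\mathcal{S}}(t)\mathcal{P}\mathbf{u}|_{L^2} \leq C e^{-\theta t}|\mathcal{P}\mathbf{u}|_{L^2} \leq C' e^{-\theta t}|\mathbf{u}|_{L^2},
\end{equation*}
using the boundedness of the projection $\mathcal{P}$ on $L^2$ (immediate from its explicit formula, Cauchy--Schwarz, and $\Theta \neq 0$).

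There is no genuine obstacle here: the hard analytical work (Kac's transformation, simplicity of $\lambda = 0$, stability of the essential spectrum through the dispersion relation, Evans function convergence, high-frequency resolvent bounds via approximate diagonalization, and the application of Gearhart--Pr\"uss) has been carried out earlier. The only mild point requiring care is the routine verification of $\mathcal{S}(t)\mathcal{P} = \mathcal{P}\mathcal{S}(t)$, which rests on $(\Psi,\Phi)^\top$ being an eigenfunction of $(\mathcal{L}^\tau)^*$ at $\lambda = 0$ (Lemma \ref{lempre1}) and the identification of $(\mathcal{L}^\tau)^*$ as the generator of the dual semigroup on the reflexive space $L^2(\R;\C^2)$.
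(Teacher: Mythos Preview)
Your proposal is correct and matches the paper's own treatment: the theorem is stated as a summary of the preceding subsections without a separate proof, and the ingredients you invoke (the explicit formula for $\mathcal{P}$, the identities $\mathcal{S}(t)(U_x,V_x)^\top=(U_x,V_x)^\top$ and $\mathcal{S}(t)^*(\Psi,\Phi)^\top=(\Psi,\Phi)^\top$, the commutation $\mathcal{S}(t)\mathcal{P}=\mathcal{P}\mathcal{S}(t)$, and the Gearhart--Pr\"uss decay estimate \eqref{decayest}) are exactly those already established there. Your additional remark that $\mathcal{P}$ is bounded on $L^2$ is the one detail the paper leaves implicit, and it is handled correctly.
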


\subsection{Nonlinear stability}
\label{secnonlinear}
The proof of Theorem \ref{thm:nonlinstab} on nonlinear orbital stability of the traveling
fronts for \eqref{relAC} is a consequence of the linear decay estimates combined in a
smart way with the standard Duhamel representation formula. 
The main difficulty stems in the fact that a single traveling front is not isolated as a
stationary solution and it belongs to a one-dimensional manifold generated by applying
an arbitrary translation in space. 
This is a common feature of many autonomous evolutive PDEs when considered in the
whole space as a consequence of the underlying translation invariance of the corresponding
initial value problem.

At the linear level, such feature of the problem is expressed by the membership of $\lambda=0$
to the spectrum of the linearized operator and by the presence of a time-independent projection
term into the representation of the solution semigroup.
Converting such structure at the nonlinear level amounts in identifying a nonlinear projection
operator describing the convergence of a given perturbed initial datum to a translate of the
original front. 
A possible approach is based on the application of the Implicit Function Theorem in Banach spaces
and it has been used by Sattinger in the classical paper \cite{Satt76}.
For the sake of clarity, we first present here a restyled version of this approach in the framework of Hilbert spaces, as needed in our case, and
then apply it to prove Theorem \ref{thm:nonlinstab}.

Let $\mathcal{W}$ be a Hilbert space with norm $|\cdot|_{{}_{\mathcal{W}}}$
and let $B_r(\overline{W})$ be the open ball with center $\overline{W}$ and radius $r$.
Let $F$ be a smooth function from $\mathcal{D}\subset\mathcal{W}$ into $\mathcal{W}$
such that $F(\overline{W})=0$ for some $\overline{W}\in \mathcal{D}$.
Additionally, let us assume that, for some $r>0$, there holds
\begin{equation*}
	\{W\in \mathcal{W}\,:\,F(W)=0\}\cap \{|W-\overline{W}|_{{}_{\mathcal{W}}}<r\}=\phi(I)
\end{equation*}
for some smooth function $\phi\,:\,I\to\mathcal{W}$, $I \subset \R$ an open interval.
Without loss of generality, we may assume $0\in I$ and $\phi(0)=\overline{W}$.

Let $W=W(t;W_0)$ be the solution to the abstract Cauchy problem
\begin{equation}\label{cauchy2}
	\frac{dW}{dt}=F(W), \qquad\qquad W(0)=W_0\in\mathcal{D}.
\end{equation}
By assumption, there holds  $W(t;\phi(\delta))=\phi(\delta)$ for any $t$.

The linearized problem at $\phi(\delta)$ is
\begin{equation}\label{lincauchy2}
	\frac{dZ}{dt}=dF(\phi(\delta))Z, \qquad\qquad Z(0)=Z_0\in\mathcal{D}.
\end{equation} 
Differentiating with respect to $\delta$ the relation $F(\phi(\delta))=0$ for $\delta\in I$, we infer
\begin{equation*}
	dF(\phi(\delta))\,\phi'(\delta)=0,
\end{equation*}
showing that $0\in \sigma\bigl(dF(\phi(\delta))\bigr)$ and that
$r(\delta):=\phi'(\delta)$ is a right eigenvector of $dF(\phi(\delta))$.
Let us denote by $\ell(\delta)$ the unique left eigenvector of $dF(\phi(\delta))$
such that $\ell(\delta)\cdot r(\delta)=1$.
Equivalently, $\ell(\delta)$ can be defined as the unique element in the kernel of
the adjoint operator $dF(\phi(\delta))^\ast$ satisfying the normalization condition
$\ell(\delta)\cdot r(\delta)=1$.
We also set for $\delta\in I$
\begin{equation*}
	P(\delta):=r(\delta)\otimes \ell(\delta),\qquad Q(\delta):=I-P(\delta).
\end{equation*}
In particular, there hold
\begin{equation*}
	dF(\phi)P=P\,dF(\phi)=0, \qquad\textrm{and}\qquad
	dF(\phi)Q=Q\,dF(\phi)=dF(\phi),
\end{equation*}
where the dependency on $\delta$ has been omitted for shortness.
\vskip.25cm

We assume the following hypotheses.
\vskip.15cm

{\bf H1.} There exist $C, \theta>0$ such that the solution $Z=Z(t;Z_0,\delta)$
to \eqref{lincauchy2} is such that
\begin{equation}\label{lindecay2}
	|Q(\delta)Z(t;Z_0,\delta)|\leq Ce^{-\theta t}|Q(\delta)Z_0|
\end{equation}
for any $Z_0\in\mathcal{D}$.
\vskip.15cm

{\bf H2.} The function $\phi$ is differentiable at $\delta=0$ and there exist
$C, \delta_0, \gamma>0$ such that 
\begin{equation}\label{regophi}
	|\phi(\delta)-\phi(0)-\phi'(0)\delta|_{{}_{\mathcal{W}}}\leq C\delta^{1+\gamma},
\end{equation}
for $|\delta|<\delta_0$.
\vskip.15cm

{\bf H3.} There exist $C, M, \delta_0,\gamma>0$ such that the function $F$ is differentiable at $\phi(\delta)$
for any $\delta\in(-\delta_0,\delta_0)$ and
\begin{equation}\label{regoF}
	|F(\phi(\delta)+W)-F(\phi(\delta))-dF(\phi(\delta))W|_{{}_{\mathcal{W}}}
		\leq C|W|_{{}_{\mathcal{W}}}^{1+\gamma},
\end{equation}
for $|\delta|<\delta_0$ and $|W|_{{}_{\mathcal{W}}}\leq M$.

\begin{theorem}\label{thm:abstract}
Assume that hypotheses {\bf H1}, {\bf H2} and {\bf H3} hold.
Then there exists $\varepsilon>0$ such that for any $W_0\in B_\varepsilon(\bar W)$
there exists $\delta\in I$ for which the solution $W(t;W_0)$ to \eqref{cauchy2} satisfies
\begin{equation}\label{nonlindecay2}
	|W(t;W_0)-\phi(\delta)|_{{}_{\mathcal{W}}}\leq C|W_0-\overline{W}|_{{}_{\mathcal{W}}}\,e^{-\theta\,t}
\end{equation}
for some $C,\theta>0$
\end{theorem}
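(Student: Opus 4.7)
The plan is to adapt the classical Sattinger approach, decomposing the perturbation into a component tangent to the equilibrium manifold $\phi(I)$ and a component transverse to it, using H1 to decay the transverse part and a careful choice of the shift $\delta$ to force the tangent component to decay as well. The argument will be recast as a fixed-point problem in an exponentially weighted space.

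First I would set $W(t) = \phi(\delta) + \tilde W(t)$ for a shift $\delta \in I$ to be determined. Since $F(\phi(\delta)) = 0$, the perturbation satisfies
\begin{equation*}
	\tilde W_t = dF(\phi(\delta))\tilde W + N(\tilde W,\delta),
	\qquad \tilde W(0) = W_0 - \phi(\delta),
\end{equation*}
with $N(\tilde W,\delta) := F(\phi(\delta)+\tilde W) - dF(\phi(\delta))\tilde W$ satisfying $|N(\tilde W,\delta)|_{{}_{\mathcal{W}}} \leq C|\tilde W|_{{}_{\mathcal{W}}}^{1+\gamma}$ thanks to H3. Splitting $\tilde W = \alpha(t) r(\delta) + Y(t)$ with $\alpha(t) := \ell(\delta)\cdot\tilde W(t)$ and $Y(t) := Q(\delta)\tilde W(t)$, and using that $dF(\phi(\delta))r(\delta) = 0$ and that $dF(\phi(\delta))$ commutes with $Q(\delta)$, the equations decouple into
\begin{equation*}
	\alpha'(t) = \ell(\delta)\cdot N(\tilde W(t),\delta),
	\qquad Y_t = dF(\phi(\delta))Y + Q(\delta)N(\tilde W,\delta),
\end{equation*}
and the variation of constants formula together with H1 yields
\begin{equation*}
	|Y(t)|_{{}_{\mathcal{W}}} \leq C e^{-\theta t}|Y(0)|_{{}_{\mathcal{W}}}
		+ C\int_0^t e^{-\theta(t-s)}|N(\tilde W(s),\delta)|_{{}_{\mathcal{W}}}\,ds.
\end{equation*}

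The crucial observation is that imposing only $\alpha(0) = 0$ (which is trivially done by the implicit function theorem applied to $\ell(\delta)\cdot(W_0-\phi(\delta)) = 0$) does \emph{not} guarantee $\alpha(t) \to 0$, since the projection drifts along the manifold. Instead, I impose $\lim_{t\to+\infty}\alpha(t) = 0$, which amounts to the scalar equation
\begin{equation*}
	G(\delta,\tilde W) := \ell(\delta)\cdot(W_0 - \phi(\delta))
		+ \int_0^{+\infty} \ell(\delta)\cdot N(\tilde W(s),\delta)\,ds = 0, \qquad (\star)
\end{equation*}
and, once $(\star)$ is enforced, $\alpha(t) = -\int_t^\infty \ell(\delta)\cdot N\,ds$ inherits the exponential decay of $N$. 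I would then recast the problem as a joint fixed point for $(\tilde W,\delta)$ in the weighted Banach space
\begin{equation*}
	\mathcal{X}_{\theta'} := \Bigl\{\tilde W \in C([0,+\infty);\mathcal{W}) :
		\|\tilde W\|_{\mathcal{X}_{\theta'}} := \sup_{t\geq 0} e^{\theta' t}|\tilde W(t)|_{{}_{\mathcal{W}}} < \infty\Bigr\},
\end{equation*}
for some $0 < \theta' < \theta$. Given $\tilde W$ in a small ball of $\mathcal{X}_{\theta'}$, the implicit function theorem applied to $(\star)$ produces a unique $\delta = \delta(\tilde W, W_0)$ (since, using H2, $\partial_\delta G$ at the base point equals $-\ell(0)\cdot r(0) = -1$); one then defines $\mathcal{T}\tilde W$ as the sum of the transverse Duhamel formula above and the tangent piece $-\bigl(\int_t^\infty \ell(\delta)\cdot N\,ds\bigr)r(\delta)$.

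The final step is to verify that, for $|W_0 - \bar W|_{{}_{\mathcal{W}}}$ small enough, $\mathcal{T}$ preserves a sufficiently small closed ball of $\mathcal{X}_{\theta'}$ and is a contraction there. Both properties reduce, via H1, H3 and H2, to the elementary estimate
\begin{equation*}
	\int_0^t e^{-\theta(t-s)} e^{-(1+\gamma)\theta' s}\,ds \leq C e^{-\theta' t},
\end{equation*}
valid for any $\theta' < \theta$, which ensures that the nonlinear contribution does not destroy the decay rate. The unique fixed point yields $|\tilde W(t)|_{{}_{\mathcal{W}}} = |W(t) - \phi(\delta)|_{{}_{\mathcal{W}}} \leq C|W_0-\bar W|_{{}_{\mathcal{W}}} e^{-\theta' t}$, proving the theorem with $\theta$ replaced by $\theta'$. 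The main obstacle is the circular coupling between the shift $\delta$ and the trajectory $\tilde W$ through the improper integral in $(\star)$: handling it requires uniform Lipschitz control in $\delta$ of $\phi$, $r$, $\ell$, and of the semigroup generated by $dF(\phi(\delta))$ on a neighborhood of $\delta = 0$, which is obtained by combining H2 with standard perturbation estimates for $C_0$-semigroups.
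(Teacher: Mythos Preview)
Your proposal is correct and follows the same Sattinger strategy as the paper: decompose the perturbation along and transverse to the manifold $\phi(I)$, impose $\alpha(+\infty)=0$ to select the shift, and close in an exponentially weighted space. The one organizational difference worth noting is that the paper first rescales by writing $W=\phi(\varepsilon\eta)+\varepsilon w$ with $\varepsilon=|W_0-\bar W|_{\mathcal W}$ and then applies the Implicit Function Theorem \emph{once} to the full system for the triple $(\eta,\alpha,\omega)\in\R\times C^0_\theta(\R_+;\R)\times C^0_\theta(\R_+;\mathcal W)$ as a function of the small parameter $\varepsilon$; at $\varepsilon=0$ the Jacobian in $(\eta,\alpha,\omega)$ is simply the identity, so no Lipschitz-in-$\delta$ control of $\ell$, $Q$, or the semigroup $e^{dF(\phi(\delta))t}$ is ever needed. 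Your two-step route (IFT for $\delta$ given $\tilde W$, then contraction for $\tilde W$) is equally valid but does require the uniform-in-$\delta$ perturbation estimates you flag at the end, which is the price paid for not rescaling. A minor bonus of the paper's bookkeeping is that it closes directly at the rate $\theta$ of H1 rather than at some $\theta'<\theta$: since $|N(\tilde W(s))|_{\mathcal W}\le C e^{-(1+\gamma)\theta s}\|\tilde W\|_{\mathcal X_\theta}^{1+\gamma}$, the Duhamel integral $e^{\theta t}\int_0^t e^{-\theta(t-s)}|N|\,ds\le C\int_0^t e^{-\gamma\theta s}\,ds$ is already bounded, so there is no need to give up any decay.
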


\begin{proof}
Given $W_0\in \mathcal{W}$, let $w_0\in \mathcal{W}$ be such that
$W_0=\overline{W}+\varepsilon w_0$ where  $\varepsilon:=|W_0-\overline{W}|_{{}_{\mathcal{W}}}$
and let the solution $W$ to \eqref{cauchy2} be decomposed as
\begin{equation*}
	W=\phi(\varepsilon\eta)+\varepsilon\,w
\end{equation*}
with $\eta=\eta(\varepsilon)$ to be determined later, where the function $w$ solves
\begin{equation}\label{cauchypert2}
	\frac{dw}{dt}=dF(\phi(\varepsilon\eta))w+\varepsilon^\gamma R (\eta,w;\varepsilon),
	\quad
	w(0)=w_0-\phi'(0)\eta-\varepsilon^\gamma \psi(\eta;\varepsilon)
\end{equation}
where
\begin{equation*}
	\begin{aligned}
		R (\eta,w;\varepsilon)&:=\varepsilon^{-1-\gamma}
			\left\{F(\phi(\varepsilon\eta)+rw)-F(\phi(\varepsilon\eta))-dF(\phi(\varepsilon\eta))rw\right\},\\
		\psi (\eta;\varepsilon)&:=\varepsilon^{-1-\gamma}
			\left\{\phi(\varepsilon\eta)-\phi(0)-\phi'(0)\varepsilon\eta\right\}.\\
	\end{aligned}
\end{equation*}
Decomposing $w$ as
\begin{equation*}
	w=\alpha\,\phi'(\varepsilon\eta)+\omega, \qquad\qquad\textrm{where}\qquad
		\alpha:=\ell(\varepsilon\eta)\cdot w,\quad \omega:=Q(\varepsilon\eta) w,
\end{equation*}
and setting
\begin{equation*}
	S(\eta,\alpha,\omega;\varepsilon):=R(\eta,\alpha\,\phi'(\varepsilon\eta)+\omega;\varepsilon),
\end{equation*}
the unknowns $\alpha$ and $\omega$ solve
\begin{equation}
	\left\{\begin{aligned}
		\frac{d\alpha}{dt}&=\varepsilon^\gamma \,\ell(\varepsilon\eta)\cdot S(\eta,\alpha,\omega;\varepsilon),\\
		\frac{d\omega}{dt}&=dF(\phi(\varepsilon\eta))\omega
			+\varepsilon^\gamma Q(\varepsilon\eta) S(\eta,\alpha,\omega;\varepsilon),
	\end{aligned}\right.
\end{equation}
with initial conditions
\begin{equation}
	\left\{\begin{aligned}
		\alpha(0)&=\ell(\varepsilon\eta)\cdot\bigl(w_0-\phi'(0)\eta-\varepsilon^\gamma \psi(\eta;\varepsilon)\bigr),\\
		\omega(0)&=Q(\varepsilon\eta)\bigl(w_0-\phi'(0)\eta-\varepsilon^\gamma \psi(\eta;\varepsilon)\bigr).
	\end{aligned}\right.
\end{equation}
Therefore, the following relations hold
\begin{equation}\label{alphaomega}
	\begin{aligned}
	\alpha(t)&=\ell\cdot\bigl(w_0-\phi'(0)\eta\bigr)
		-\varepsilon^\gamma\left\{\ell\cdot\psi-\int_0^t \ell\cdot S\,d\tau\right\},\\
	\omega(t)&=e^{dF(\phi)t}Q\bigl(w_0-\phi'(0)\eta\bigr)\\
		&\hskip2.5cm 
			-\varepsilon^\gamma\left\{e^{dF(\phi)t}Q\psi-\int_0^t e^{dF(\phi)(t-\tau)}QS\,d\tau\right\}.
	\end{aligned}
\end{equation}
where $\ell=\ell(\varepsilon\eta), Q=Q(\varepsilon\eta), \phi=\phi(\varepsilon\eta), \psi=\psi(\eta;\varepsilon)$ and $S=S(\eta,\alpha,\omega;\varepsilon)$.

Next, we require the value $\eta=\eta(\varepsilon)$ to be such that $\alpha(+\infty)=0$ that is
\begin{equation*}
	\ell\cdot\bigl(-w_0+\phi'(0)\eta\bigr)
	+\varepsilon^\gamma\left\{\ell\cdot\psi -\int_0^{+\infty}  \hskip-.25cm  \ell\cdot S\,d\tau\right\}=0.
\end{equation*}
Thus, the triple $(\eta,\alpha,\omega)$ has to be such that
\begin{equation}\label{implrel}
	\mathcal{F}(\eta,\alpha,\omega;\varepsilon)+\varepsilon^\gamma \mathcal{G}(\eta,\alpha,\omega;\varepsilon) =0
\end{equation}
where 
\begin{equation*}
	\begin{aligned}
	\mathcal{F}&=\Bigl(\ell\cdot\bigl(-w_0+\phi'(0)\eta\bigr),
		\alpha, \omega-e^{dF(\phi)t} Q\bigl(w_0-\phi'(0)\eta\bigr) \Bigr)\\
	\mathcal{G}&=\Bigl(\ell\cdot\psi-\int_0^{+\infty} \hskip-.25cm \ell\cdot S\,d\tau ,
		\int_t^{+\infty} \hskip-.25cm \ell\cdot S\,d\tau , e^{dF(\phi)t} Q\psi-\int_0^t e^{dF(\phi)(t-\tau)} QS\,d\tau  \Bigr).
	\end{aligned}
\end{equation*}
We want to show that, for small $\varepsilon$, the implicit relation \eqref{implrel} defines a function
$\varepsilon\mapsto (\eta,\alpha,\omega)$.
To prepare for the application of the Implicit function theorem in Banach spaces
(among others, see \cite{AmbrProd95}), let us introduce an appropriate
functional setting.
Given $\theta>0$ and a Banach space $\mathcal{Y}$ with norm $|\cdot|_{{}_{\mathcal{Y}}}$, set
\begin{equation*}
	C^0_\theta(\R_+;\mathcal{Y}):=\left\{f\in C^0(\R_+;\mathcal{Y})\,:\,
		\sup_{t>0} e^{\theta\,t}|f(t)|_{{}_{\mathcal{Y}}}<+\infty\right\}.
\end{equation*}
Then, let us consider the Banach space
$\mathcal{X}=\R\times C^0_\theta(\R_+;\R)\times C^0_\theta(\R_+;\mathcal{W})$ 
with norm
\begin{equation*}
	\left\|(\eta,\alpha,\omega)\right\|_{{}_{\mathcal{X}}}
		:=|\eta|+\sup_{t>0} e^{\theta\,t}\bigl(|\alpha(t)|+|\omega(t)|_{{}_{\mathcal{W}}}\bigr).
\end{equation*}
Choosing $\theta$ as in \eqref{lindecay2}, for any $M>0$, the function $\mathcal{F}$
maps the set $\mathcal{X}\times (-\varepsilon,\varepsilon)$ into $\mathcal{X}\cap\{|\eta|\leq M\}$
for $\varepsilon$ sufficiently small, since
\begin{equation*}
	e^{\theta\,t} |e^{dF(\phi)t} Q\bigl(w_0-\phi'(0)\eta\bigr)|_{{}_{\mathcal{W}}}
	\leq  C\bigl|Q\bigl(w_0-\phi'(0)\eta\bigr)\bigr|_{{}_{\mathcal{W}}}<+\infty.
\end{equation*}
Moreover, as a consequence of the estimate
\begin{equation*}
	\begin{aligned}
	e^{\theta\,t}\int_t^{+\infty}
		&|S(\eta,\alpha(\tau),\omega(\tau);r)|_{{}_{\mathcal{W}}}\,d\tau \leq C\,e^{\theta\,t}\int_t^{+\infty}
			|\alpha(\tau)\phi'(\varepsilon\eta)+\omega(\tau)|_{{}_{\mathcal{W}}}^{1+\gamma}\,d\tau\\
		&\leq C\,e^{\theta\,t} \int_t^{+\infty} e^{-(1+\gamma)\theta \tau}\left\{e^{\theta \tau}\left(|\alpha(\tau)|
			+|\omega(\tau)|_{{}_{n}}\right)\right\}^{1+\gamma}\,d\tau\\
		&\leq C\,e^{-\gamma\theta\,t}\left\|(0,\alpha,\omega)\right\|_{{}_{\mathcal{X}}}^2,
	\end{aligned}
\end{equation*}
also the function $\varepsilon^\gamma\mathcal{G}$ maps $\mathcal{X}\times (-\varepsilon,\varepsilon)$
into $\mathcal{X}\cap\{|\eta|\leq M\}$ for any $M>0$ and for $\varepsilon$ sufficiently small.
Moreover, the smoothness of the functions $\ell, Q, \psi, S$ with respect to their arguments
guarantees that the map $\mathcal{F}+\varepsilon^\gamma\mathcal{G}$ is differentiable.

For $r=0$, there holds
$\mathcal{F}(\eta,\alpha,\omega;0)=\bigl(\ell(0)\cdot w_0-\eta,\alpha,\omega-Q(0) w_0\bigr)$,
thus \eqref{implrel} is satisfied if and only if
\begin{equation*}
	\eta=\ell(0)\cdot w_0,\qquad \alpha=0,\qquad \omega=Q(0)w_0.
\end{equation*}
In order to apply Implicit Function Theorem, it is sufficient to observe that
\begin{equation*}
	\frac{\partial\left(\mathcal{F}+\varepsilon^\gamma\mathcal{G}\right)}
		{\partial(\eta,\alpha,\omega)}\Bigr|_{\varepsilon=0}
		=\frac{\partial \mathcal{F}}{\partial(\eta,\alpha,\omega)}\Bigr|_{\varepsilon=0}
			=\begin{pmatrix} 1 &0 &0\\ 0 &I	&0\\ 0 &0 &I \end{pmatrix}
\end{equation*}
since
\begin{equation*}
	\frac{\partial}{\partial \eta}\ell(\varepsilon\eta)\Bigr|_{\varepsilon=0}
		=\frac{\partial}{\partial \eta}Q(\varepsilon\eta)\Bigr|_{\varepsilon=0}=0.
\end{equation*}
Thus, in a neighborhood of $\varepsilon=0$, there exist a smooth function $\Xi$ with values in
a neighborhood of $(\ell(0)\cdot w_0,0,Q(0)w_0)\in \mathcal{X}$
such that
\begin{equation}\label{implrel2}
	\mathcal{F}+\varepsilon\mathcal{G}=0
	\qquad\textrm{if and only if}\qquad
	(\eta,\alpha,\omega)=\Xi(r).
\end{equation}
The function $\Xi$ is locally bounded, $\|\Xi(r)\|_{\mathcal{X}}\leq C$ for $\varepsilon$ small, and thus
\begin{equation*}
	|w(t)|=|\alpha(t)\phi'(\varepsilon\eta)+\omega(t)|\leq C\,e^{-\theta\,t}.
\end{equation*}
Recalling that $W=\phi(\varepsilon\eta)+\varepsilon w$,  the decay estimate \eqref{nonlindecay2} follows.
%\flushright\qed
\end{proof}

%%%%%%%%%%%%%% Here
With Theorem \ref{thm:abstract} at hand, we are able to provide the proof of Theorem \ref{thm:nonlinstab}.
For the reader's convenience, let us briefly retrace the path toward nonlinear stability.
The traveling wave $(U,V)$ propagates with a specific speed $c$.
Thus, considering a reference frame moving with such speed,
we obtain the nonlinear system \eqref{nlsystw} for the perturbation variables, for which the Cauchy problem can be written as
\begin{equation}
\label{relACmove2}
\begin{aligned}
 \partial_{t}\begin{pmatrix} u\\ v\end{pmatrix}
	 &= - {\mathbf{B}}^{-1}\left( \mathbf{A} \,\partial_{x}\begin{pmatrix} u\\ v\end{pmatrix}
	 	+\begin{pmatrix} f(U)- f(u+U)\\ v\end{pmatrix}\right),\\
\begin{pmatrix} u\\ v\end{pmatrix}(0) &= \begin{pmatrix} u_0 - U\\ v_0 - V\end{pmatrix}
\end{aligned}
\end{equation}
where $\mathbf{A}$ and $\mathbf{B}$ are defined in \eqref{defABC}, and $(u_0,v_0)$ is the (unperturbed) initial data of Theorem \ref{thm:nonlinstab}.
Problem \eqref{relACmove2} corresponds to \eqref{cauchy2} in the general framework
previously considered in Theorem \ref{thm:abstract}. Therefore, we are able to make the following identifications:

{ 1.} the Hilbert space $\mathcal{W}$ is $H^1(\R;\R^2)$; the steady state $\overline{W}$ is $\overline{W}=0$, and
the function $\phi$ is defined by $\phi(\delta):=(U,V)(\cdot+\delta) -(U,V)(\cdot) $ with $\delta\in \R$; observe that for each $\delta \in \R$ fixed,
\[
\begin{aligned}
 |\phi(\delta)|_{{H^1}}^2 &= \int_\R |(U,V)(\zeta + \delta) - (U,V)(\zeta)|^2 \, d\zeta + \int_\R |(U_x,V_x)(\zeta + \delta) - (U_x,V_x)(\zeta)|^2 \, d\zeta\\
&= \int_\R |(U_x, V_x)(\hat \theta \zeta)|^2 \delta^2 \, d\zeta + \int_\R |(U_{xx}, V_{xx})(\hat \theta \zeta)|^2 \delta^2 \, d\zeta\\
&\leq C_\delta |(U_x,V_x)|_{{H^1}}^2,
\end{aligned}
\]
for some $\hat \theta \in (0,\delta)$, showing that $\phi(\delta) \in \mathcal{W}$.\par
{ 2.} the linearized equation (corresponding to the one in \eqref{lincauchy2}) is
\begin{equation*}
	\partial_{t}\begin{pmatrix} u\\ v\end{pmatrix}
	 = - {\mathbf{B}}^{-1}\Big( \mathbf{A}\,\partial_{x}
	 	+ \mathbf{C}(x) \Big) \begin{pmatrix} u\\ v\end{pmatrix};
\end{equation*}
\indent
{ 3.} the remainder, for which the estimate \eqref{regoF} has to be proved, is
\begin{equation*}
	\begin{pmatrix} u\\ v\end{pmatrix}	\quad \mapsto\quad
		\begin{pmatrix} R(U;u) \\ 0 \end{pmatrix}
		:=\begin{pmatrix} f(U+u) - f(U) - f'(U)u \\ 0 \end{pmatrix}.
\end{equation*}
Assuming $f\in C^3$, we next show that hypotheses {\bf H2} and {\bf H3} are verified
for the function space $\mathcal{W}=H^1(\R;\R^2)$.

First we verify \eqref{regophi}. Denoting by $\Phi$ the couple $(U,V)$, there holds
\begin{equation*}
	\begin{aligned}
	|\phi(\delta) - \phi(0) - \phi'(0)\delta|_{{L^2}}^2 &= \int_{\R}\left|\Phi(x+\delta)-\Phi(x)-\Phi_x(x)\delta\right|^2\,dx\\
		&=\delta^2\int_{\R}\left|\int_0^1 \left(\Phi_x(x+\theta\delta)-\Phi_x(x)\right)d\theta\right|^2\,dx\\
	&\leq \delta^2\int_{\R}\int_0^1 \left|\Phi_x(x+\theta\delta)-\Phi_x(x)\right|^2 d\theta\,dx\\
		&\leq \delta^4\int_{\R}\int_0^1\int_0^1 |\Phi_{xx}|^2 d\eta\,d\theta\,dx,
	\end{aligned}
\end{equation*}
and thus
\begin{equation*}
	|\phi(\delta) - \phi(0) - \phi'(0)\delta|_{{L^2}} \leq \delta^2 |\Phi_{xx}|_{{L^2}}.
\end{equation*}
A similar estimate can be obtained by differentiating with respect to $x$, so that
\begin{equation*}
	|\phi(\delta) - \phi(0) - \phi'(0)\delta|_{{H^1}} \leq \delta^2 |\Phi_{xx}|_{{H^1}}.
\end{equation*}
To prove estimate \eqref{regoF}, we first observe that
\begin{equation*}
	\begin{aligned}
	R(U;u)&=\left\{\int_0^1 f''(U+\theta u)(1-\theta)\,d\theta\right\} u^2;\\
	\partial_{x}R(U;u)
		&=\left\{\int_0^1 f'''(U+\theta u)\theta(1-\theta)\,d\theta\right\} u^2\,\partial_{x} u\\
		&\quad +2\left\{\int_0^1 f''(U+\theta u)(1-\theta)\,d\theta\right\} u\,\partial_{x} u .
	\end{aligned}
\end{equation*}
Thus, for $u\in H^1(\R)$, taking into account the embedding $H^1(\R)\subset L^\infty(\R)$,
there holds
\begin{equation*}
	|R(U;u)|_{{}_{L^2}}\leq C\,|u|_{{}_{L^2}}^2,\qquad
	|\partial_{x} R(U;u)|_{{}_{L^2}}\leq C\,|u|_{{}_{H^1}}^2,
\end{equation*}
where the constant $C$ depends on $f, U$ and the $L^\infty-$norm of  $u$,
so that, in particular, estimate \eqref{regoF} holds with $\gamma=1$.

Finally, thanks to Theorem \ref{thm:linstab}, also hypothesis {\bf H1} is verified,
so that Theorem \ref{thm:abstract} applies and  Theorem \ref{thm:nonlinstab} follows.

%%%%%%% Here

\section{Numerical experiments}\label{sect:numerics}

In this Section, we present some numerical experiment on system \eqref{relAC},
based on the observation that it can be rewritten as the weakly coupled semilinear
hyperbolic system (a reactive version of the Goldstein--Kac model for correlated random walk):
\begin{equation}\label{cineAC}
	\left\{\begin{aligned}
	\partial_{t} u_--\varrho\partial_{x} u_-
		&= \tfrac1{2}\,\tau^{-1}(-u_-+u_+)+\tfrac12 f(u_++u_-),\\
	\partial_{t} u_++\varrho\partial_{x} u_+
		&= \tfrac1{2}\tau^{-1}(u_--u_+)+\tfrac12 f(u_++u_-),
	\end{aligned}\right.
\end{equation}
where the coefficient $\varrho$ and the unknowns $u_\pm$ are given by
\begin{equation*}
	\varrho:=1/\sqrt{\tau},\qquad
	u_-:=\tfrac{1}{2}\left(u+\varrho^{-1} v\right),\qquad
	u_+:=\tfrac{1}{2}\left(u-\varrho^{-1} v\right).
\end{equation*}
Inverting the equality, we infer the relations $u=u_++u_-$ and $v=\varrho(u_--u_+)$.

Fixed the mesh size $\dx>0$, we discretize the space by approximating the first order space
derivatives in an upwind fashion. 
Thus, setting $r_j\approx u_-(j\,\dx,t)$ and $s_j\approx u_+(j\,\dx,t)$, we obtain
\begin{equation}\label{semidiscrete}
	\left\{\begin{aligned}
	\frac{dr_j}{dt}&=\frac{\varrho}{\dx}\left(r_{j+1}-r_{j}\right)
		+ \frac1{2\tau}\left(-r_j+s_j\right)+\frac12 f(r_j+s_j),\\
	\frac{ds_j}{dt}&=-\frac{\varrho}{\dx}\left(s_{j}-s_{j-1}\right)
		+ \frac1{2\tau}\left(r_j-s_j\right)+\frac12 f(r_j+s_j).
	\end{aligned}\right.
\end{equation}
Let us stress that, setting $u_j:=r_j+s_j$ and $v_j:=\varrho(r_j-s_j)$, we infer
a semi-discrete version of \eqref{relAC}
\begin{equation*}
	\left\{\begin{aligned}
	\frac{du_j}{dt}&=\frac{1}{2}\varrho\,\dx\,\frac{u_{j+1}-2u_j+u_{j-1}}{\dx^2}
		+\frac{v_{j+1}-v_{j-1}}{2\dx}+f(u_j),\\
	\frac{dv_j}{dt}&=\frac{1}{2}\varrho\,\dx\,\frac{v_{j+1}-2v_j+v_{j-1}}{\dx^2}
		+\frac{1}{\tau}\left(\frac{u_{j+1}-u_{j-1}}{2\dx}-v_j\right),
	\end{aligned}\right.
\end{equation*}
which formally corresponds to
\begin{equation*}
	\left\{\begin{aligned}
	u_t -v_x &=\nu\, u_{xx} u+f(u),\\
	\tau v_t - u_x &=\tau\nu\,v_{xx} - v,
	\end{aligned}\right.
	\qquad\qquad\textrm{where}\quad
	\nu:=\tfrac{1}{2}\varrho\,\dx,
\end{equation*}
so that we expect the appearance of a numerical viscosity with strength measured
by the parameter $\varrho$.

Next, fixed the time step $\dt>0$, we discretize the time derivative in \eqref{semidiscrete}
by means of an implicit-explicit approach, leaded by a simplicity criterion suggesting to 
discretize implicitly only the linear terms
\begin{equation*}
	\left\{\begin{aligned}
	\frac{r_j^{n+1}-r_j^{n}}{\dt}&=\frac{\varrho}{\dx}\bigl(r_{j+1}^{n+1}-r_{j}^{n+1}\bigr)
		+ \frac1{2\tau}\bigl(-r_j^{n+1}+s_j^{n+1}\bigr)+\frac12 f(r_j^{n}+s_j^{n}),\\
	\frac{s_j^{n+1}-s_j^{n}}{\dt}&=-\frac{\varrho}{\dx}\bigl(s_{j}^{n+1}-s_{j-1}^{n+1}\bigr)
		+ \frac1{2\tau}\bigl(r_j^{n+1}-s_j^{n+1}\bigr)+\frac12 f(r_j^{n}+s_j^{n}).
	\end{aligned}\right.
\end{equation*}
Fully implicit schemes have been tested with no significant advantage in the approximation,
but with a significant increase of the computational time. 

Setting
\begin{equation*}
	\alpha=\varrho\frac{\dt}{\dx},\qquad \beta=\frac{\dt}{2\tau},\qquad f_j^n=f(r_j^n+s_j^n),
\end{equation*}
and with an upwind discretization of the space derivatives, we end up with 
\begin{equation}\label{vectSch}
	\begin{pmatrix}
	\left(1+\beta\right)\mathbb{I}-\alpha\,\mathbb{D}_+ & -\beta\,\mathbb{I} \\
	-\beta\,\mathbb{I} &\left(1+\beta\right)\mathbb{I}+\alpha\,\mathbb{D}_- \\
	\end{pmatrix}
	\begin{pmatrix}	r^{n+1} \\ s^{n+1} \end{pmatrix}=
	\begin{pmatrix} r^{n}+f^{n}\dt /2\\ s^{n}+f^{n}\dt /2 \end{pmatrix}
\end{equation}
where the matrices $\mathbb{I}, \mathbb{D}_\pm$ are given by
\begin{equation*}
	\mathbb{I}=(\delta_{i,j}),\quad
	\mathbb{D}_+=(\delta_{i+1,j}-\delta_{i,j}),\quad
	\mathbb{D}_-=(\delta_{i,j}-\delta_{i,j+1})
\end{equation*}
(here $\delta_{i,j}$ is the standard Kronecker symbol).
The block-matrix in \eqref{vectSch} is invertible, since its spectrum is contained in
the complex half plane $\{\lambda\in\C\,:\,\Real\lambda\geq1\}$ as a consequence
of the Ger\v sgorin criterion.

A direct manipulation of \eqref{vectSch} gives
\begin{equation}\label{finalSch}
	\begin{aligned}
	r^{n+1}&=(\mathbb{S}-\alpha^2\mathbb{D}_-\mathbb{D}_+)^{-1}
		\Bigl\{[(1+\beta)\mathbb{I}+\alpha\mathbb{D}_-]r^n+\beta s^n\\
		&\hskip4.5cm+\tfrac12[(1+2\beta)\mathbb{I}+\alpha\mathbb{D}_-]f^n\dt\Bigr\}\\
	s^{n+1}&=(\mathbb{S}-\alpha^2\mathbb{D}_+\mathbb{D}_-)^{-1}
		\Bigl\{\beta r^n+[(1+\beta)\mathbb{I}-\alpha\mathbb{D}_+]s^n\\
		&\hskip4.5cm+\tfrac12[(1+2\beta)\mathbb{I}-\alpha\mathbb{D}_+]f^n\dt\Bigr\}
	\end{aligned}
\end{equation}
where $\mathbb{S}$ is the symmetric matrix
\begin{equation*}
	\mathbb{S}:=(1+2\beta)\mathbb{I}+\alpha(1+\beta)(\mathbb{D}_--\mathbb{D}_+)
\end{equation*}
To start with, we test the algorithm by analyzing its capability to recover the correct
wave speeds $c_\ast$ of the front connecting the stable states $0$ and $1$.
Following \cite{LeVYee90}, we introduce an {\it average speed} of the numerical solution
at time $t^n$ defined by
\begin{equation}\label{numerAve}
  c^n=\frac{1}{\dt}\mathbf{1}\cdot(u^{n}-u^{n+1})
  	=\frac{1}{\dt}\sum_{j} (u^{n}_{j}-u^{n+1}_{j})
\end{equation}
where $\mathbf{1}=(1,\dots,1)$. 
Indeed, for any differentiable function $\phi$ with asymptotic states $\phi_\pm$
and derivative integrable in $\R$, and for $h\in\R$, there holds
\begin{equation*}
	\begin{aligned}
	\int_{\R} \left(\phi(x+h)-\phi(x)\right)dx
		&=h\int_{\R} \int_0^1 \frac{d\phi}{dx}(x+\theta h)dh\,dx\\
		&=h\int_0^1\int_{\R}  \frac{d\phi}{dx}(x+\theta h)dx\,dh=h[\phi]
	\end{aligned}
\end{equation*}
where $[\phi]:=\phi(+\infty)-\phi(-\infty)$, so that for $h=-c\,\dt$, we infer
\begin{equation*}
	c=\frac{1}{[\phi]\,\dt}\int_{\R} \left(\phi(x)-\phi(x-c\,\dt)\right)dx.
\end{equation*}
As a test case, we consider the usual cubic function $f(u)=u(u-\alpha)(1-u)$ with $\alpha\in(0,1)$.
Our aim is to compare the values for the propagation speed $c_\ast$ as obtained by means
of the shooting argument (see Section \ref{sect:existence}) and the one given by 
calculating \eqref{numerAve} for the solution to the initial-value problem with an increasing
datum connecting $0$ and $1$ and computing $c^n$ at a time $t$ so large that stabilization
of the speed of propagation of the numerical solution is reached.

To start with, we test three different choices for the couple $(\tau, \alpha)$ for different values
of $\dx$ and $\dt$, where the range of variation of $\tau$ has been chosen so that the condition
$\tau\,f'(u)<1$ is satisfied for all the values of the unstable zero $\alpha$.

\begin{table}\centering
\caption{Riemann problem with jump at $\ell/2$, $\ell=25$.
Relative error for three different cases ($T$ final time):
A. $\tau=1$, $\alpha=0.9$, $c_\ast=0.5646$, $T=40$;
B. $\tau=2$, $\alpha=0.6$, $c_\ast=0.1737$, $T=30$;
C. $\tau=4$, $\alpha=0.7$, $c_\ast=0.3682$, $T=35$.
\label{tab:numerspeeds3}}
{\begin{tabular}{@{}r|c|r|r|r|r|r@{}} 
 			&$\dx$ 	&$2^0$	&$2^{-1}$	&$2^{-2}$	&$2^{-3}$	&$2^{-4}$	\\ \hline
 			&A 		&0.1664	&0.0787	&0.0325	&0.0091	&0.0018 	\\
$\dt=10^{-1}$ 	&B 		&0.0383	&0.0306	&0.0241	&0.0198	&0.0175 	\\
 			&C 		&0.1527 	&0.1144	&0.0818	&0.0581	&0.0442 	\\ 
\hline
			&A 		&0.1751	&0.0876	&0.0417	&0.0186	&0.0079	\\
$\dt=10^{-2}$	&B 		&0.0275	&0.0196	&0.0128	&0.0084	&0.0061	\\
			&C 		&0.1420	&0.1018	&0.0684	&0.0457	&0.0339	\\
\hline
			&A 		&0.1760 	&0.0885	&0.0427	&0.0196	&0.0089	\\
$\dt=10^{-3}$	&B 		&0.0265 	&0.0184 	&0.0117	&0.0072	&0.0049	\\
			&C 		&0.1411	&0.1006	&0.0670	&0.0441	&0.0321
\end{tabular}}
\end{table}

From Table \ref{tab:numerspeeds3}, we note that the smallness of the space mesh $\dx$ is more
relevant than the corresponding time-step value $\dt$.

Requiring to detect the correct speed value with an error that is always less than
5\% of the effective value, we heuristically determine the choice $\dx=2^{-3}$ and
$\dt=10^{-2}$, that will be used for subsequent numerical experiments. 
For such a choice, we record the results in Table \ref{tab:numerspeeds0}
for different choices of $\alpha$ and $\tau=1$ and $\tau=4$ together with
the corresponding relative error.

\begin{table}\centering
\caption{Final average speed \eqref{numerAve} and relative error
with respect to $c_\ast$ given in Sect.\ref{sect:existence}
($N=400$, $\dx=0.125$, $\dt=0.01$, $\ell=25$, $T=40$).
\label{tab:numerspeeds0}}
{\begin{tabular}{@{}r|c|c|c|c@{}} 
 			&$\alpha=0.6$	&$\alpha=0.7$	&$\alpha=0.8$	&$\alpha=0.9$	\\ \hline
$\tau=1$ 		&0.1580		&0.3096		&0.4497		&0.5751 	\\
			&{\scriptsize 0.0101}	
						&{\scriptsize 0.0118} 
									&{\scriptsize 0.0145}
												&{\scriptsize 0.0186} \\
\hline
$\tau=4$		&0.2102		&0.3533		&0.4337		&0.4825	\\
			&{\scriptsize 0.0396}
						&{\scriptsize 0.0404}
									&{\scriptsize 0.0365}
												&{\scriptsize 0.0118}
\end{tabular}}
\end{table}

Considering different form for matrices $\mathbb{D}_\pm$ giving a second order approximation of 
the derivatives, such as
\begin{equation*}
	\mathbb{D}_+=\left(-\tfrac12\delta_{i+2,j}+2\delta_{i+1,j}-\tfrac32\delta_{i,j}\right),\qquad
	\mathbb{D}_-=\left(\tfrac32\delta_{i,j}-2\delta_{i,j+1}+\tfrac12\delta_{i,j+2}\right),
\end{equation*}
the speed approximation gain in accuracy, as reported in Table \ref{tab:secondorder}, that shows
an increase of one order. 

\begin{table}\centering
\caption{Second order in space.
Final average speed \eqref{numerAve} and relative error
with respect to $c_\ast$ given in Sect.\ref{sect:existence}
($N=400$, $\dx=0.125$, $\dt=0.01$, $\ell=25$, $T=40$).
\label{tab:secondorder}}
{\begin{tabular}{@{}r|c|c|c|c@{}} 
 			&$\alpha=0.6$	&$\alpha=0.7$	&$\alpha=0.8$	&$\alpha=0.9$	\\ \hline
$\tau=1$ 		&0.1560		&0.3052		&0.4421	 	&0.5630	\\
			&{\scriptsize 0.0025}	
						&{\scriptsize 0.0025} 
									&{\scriptsize 0.0026}
												&{\scriptsize 0.0029} \\
\hline
$\tau=4$		&0.2184		&0.3672		&0.4485		&0.4885	\\
			&{\scriptsize 0.0022}
						&{\scriptsize 0.0025}
									&{\scriptsize 0.0034}
												&{\scriptsize 0.0004}
\end{tabular}}
\end{table}

In what follows, we keep considering the previously discussed first order discretization,
since we are interested in considering initial data with sharp transitions (as in the case
of Riemann problems). In such a case, higher order approximations of the derivatives 
introduce spurious oscillations that, even being transient, may leads to catastrophic
consequences because of the bistable nature of the reaction term.

As a consequence of its capability of correct computations of propagation speeds, we consider
the scheme \eqref{finalSch} to be a reliable tools for determining numerically the behavior
of the solutions to \eqref{relAC} and we use it to show that the actual domain of attraction of the
front is much larger than guaranteed by the nonlinear stability in Theorem \ref{thm:nonlinstab}.

\subsection{Riemann problem}
The rigorous result proved in the previous sections guarantees that small perturbations
to the propagating front are dissipated, with exponential rate, by the equation. 
Inspired by the many available results for the parabolic Allen--Cahn equation (starting from the 
landmark article \cite{FifeMcLe77}), we expect that the front possesses a very large domain of
attraction and, specifically, that any bounded initial data $u_0$ such that 
\begin{equation}\label{decay}
	\limsup_{x\to-\infty} u_0(x)<\alpha<\liminf_{x\to+\infty} u_0(x)
\end{equation}
gives raise to a solution that is asymptotically convergent to a member of the traveling
fronts connecting $u=0$ with $u=1$.

\begin{figure}[hbt]\centering%\sidecaption
\resizebox{0.45\hsize}{!}{\includegraphics*{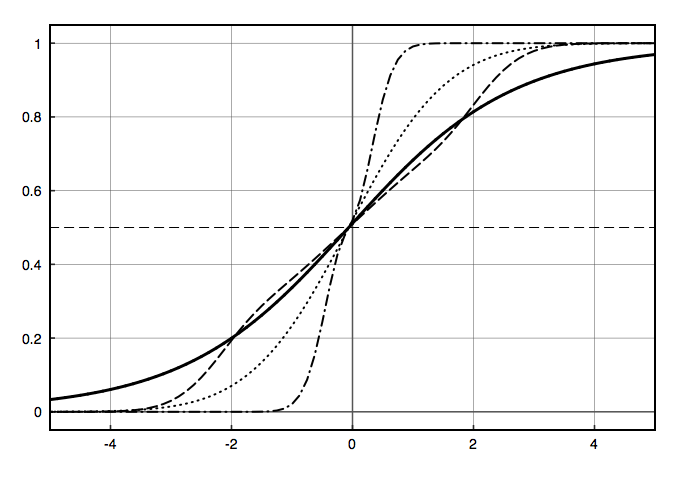}}
\resizebox{0.45\hsize}{!}{\includegraphics*{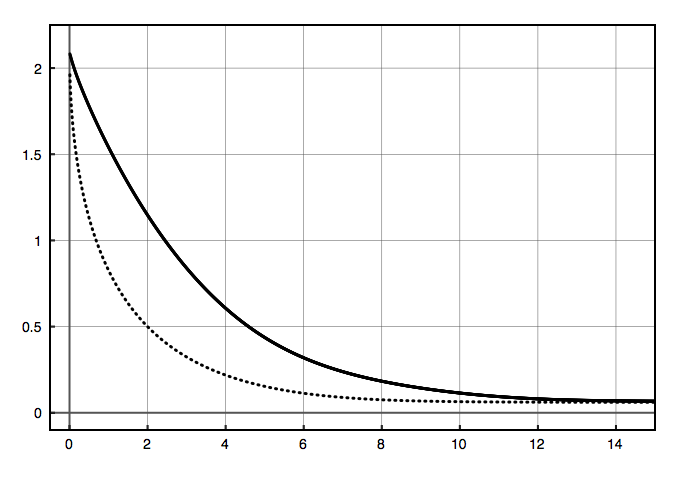}}
\caption{Riemann problem with initial datum $\chi_{{}_{(0,\ell)}}$ in $(-\ell,\ell)$, $\ell=25$.
Left: solution profiles zoomed in the interval $(-5,5)$ at time $t=1$ (dash-dot), $t=5$ (dash), $t=15$ (continuous),
for comparison, solution to the parabolic Allen--Cahn equation at time $t=1$ (dot).
Right: Decay of the $L^2$ distance to the exact equilibrium solution for the hyperbolic
(continuous) and parabolic (dot) Allen--Cahn equations.}
\label{fig:Riemann2}
\end{figure}

To support such conjecture, we perform some numerical experiments choosing
the parameters values
\begin{equation*}
	\tau=4,\qquad \ell=25,\qquad \dx=0.125,\qquad \dt=0.01.
\end{equation*}
Moreover, we consider the case $\alpha=1/2$ motivated by the fact that, in such a special case,
the profile of the traveling front for the hyperbolic Allen--Cahn equation is stationary and it coincides
with the one of the corrresponding original parabolic equation, explicitly given by
\begin{equation*}
	U(x)=\frac{1}{1+e^{-x/\sqrt{2}}},\qquad
	V(x)=\frac{dU}{dx}=\frac{1}{\sqrt{2}}\,\frac{1}{e^{x/\sqrt{2}}+2+e^{-x/\sqrt{2}}}
\end{equation*}
when normalized by the condition $U(0)=1/2$.

Numerical simulations confirm the decay of the solution to the equilibrium profile
(see Figure  \ref{fig:Riemann2}, left).
When compared with the standard Allen--Cahn equation, it appears evident that the
dissipation mechanism of the hyperbolic equation is weaker with respect to the 
parabolic case (see Figure  \ref{fig:Riemann2}, right).

\subsection{Randomly perturbed initial data}

Next, keeping all of the previous parameters fixed,
we consider initial data that resemble very roughly the transition from 0 to 1.
Namely, we divide the interval $(-\ell,\ell)$ into three parts and we choose a 
random value in each of these sub-intervals coherently with the requirement \eqref{decay}.
Precisely, we choose $u_0(x)$ to be a different random value in $(0,0.5)$ for each $x\in(-\ell,-\ell/3)$,
in $(0,1)$ for each $x\in(-\ell/3,\ell/3)$ and in $(0.5,1)$ for each $x\in(\ell/3,\ell)$.
\begin{figure}[hbt]\centering%\sidecaption
\resizebox{0.8\hsize}{!}{\includegraphics*{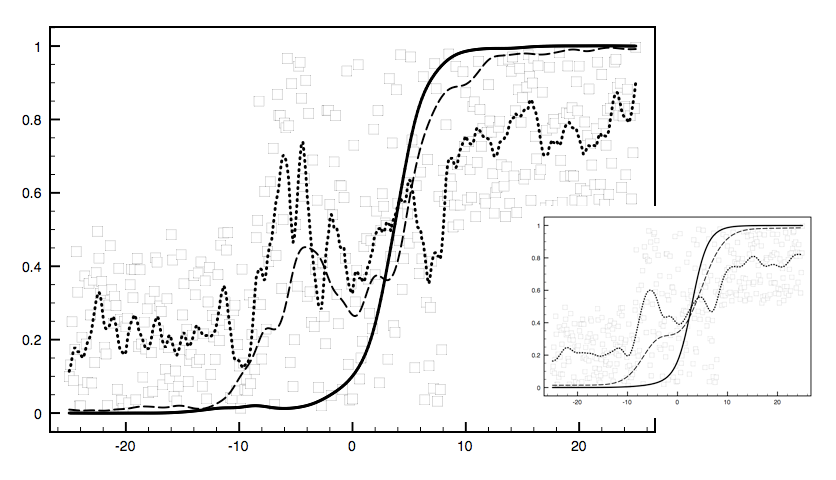}}
\caption{Random initial datum in $(-\ell,\ell)$, $\ell=25$ (squares).
Solution profiles for the hyperbolic Allen--Cahn equation with relaxation
at time $t=0.5$ (dot), $t=7.5$ (dash), $t=15$ (continuous).
For comparison, in the small window, the solution to the parabolic Allen--Cahn equation.
}\label{fig:Random1tot}
\end{figure}

The results for both hyperbolic and parabolic Allen--Cahn equation with the same initial
datum are illustrated in Fig.\ref{fig:Random1tot}.
Convergence to the equilibrium configuration is manifest.
It is also worthwhile to note that different level of smoothing produced by the 
presence/absence of the relaxation parameter $\tau$, measuring the ``hyperbolicity''
of the model.

The transition is even much more robust than what the previous computation shows.
With an initial datum $u_0(x)$ given by a random value in $(0,0.9)$ for each $x\in(-\ell,-\ell/3)$,
in $(0,1)$ for each $x\in(-\ell/3,\ell/3)$ and in $(0.1,1)$ for each $x\in(\ell/3,\ell)$, we still
observe the appearance and formation of the front, as shown in
Figure \ref{fig:Random2tot}.

%\begin{acknowledgements}
%RGP is grateful to DISIM, University of L'Aquila, and to the MathMods Program (Erasmus Mundus)
%for their hospitality and financial support in academic visits during the Falls of 2012 and 2013,
%when this research was carried out.
%This work was partially supported by CONACyT (Mexico) and MIUR (Italy),
%through the MAE Program for Bilateral Research, grant no. 146529
%and by the Italian Project FIRB 2012 ``Dispersive dynamics: Fourier Analysis and Variational Methods''.
%\end{acknowledgements}

\end{document}